\newcommand{\A}{\ensuremath{\mathbb{A}}}
\newcommand{\C}{\ensuremath{\mathbb{C}}}
\newcommand{\F}{\ensuremath{\mathbb{F}}}
\newcommand{\bM}{\ensuremath{\mathbb{M}}}
\newcommand{\bP}{\ensuremath{\mathbb{P}}}
\newcommand{\R}{\ensuremath{\mathbb{R}}}
\newcommand{\Z}{\ensuremath{\mathbb{Z}}}
\newcommand{\inv}{^{-1}}
\newcommand{\into}{\hookrightarrow}
\newcommand{\rtarr}{\longrightarrow}
\newcommand{\iso}{\cong}
\newcommand{\cl}{\mathrm{cl}}
\newcommand{\Sq}{\ensuremath{\mathrm{Sq}}}
\newcommand{\mmf}{\mathit{mmf}}
\newcommand{\tmf}{\mathit{tmf}}
\DeclareMathOperator{\Hom}{Hom}
\DeclareMathOperator{\Ext}{Ext}
\DeclareMathOperator*{\colim}{colim}
\newcolumntype{C}{>{$}l<{$}}
\newcolumntype{L}{>{$}l<{$}}
\newcolumntype{|}{}
\newcolumntype{H}{>{\setbox0=\hbox\bgroup$}c<{$\egroup}@{}}
\newcommand{\altn}[1]{\mathbf{#1}}
\begin{document}
\title{The $\eta$-local motivic sphere}
\author{Bertrand J. Guillou}
\address{Department of Mathematics\\ University of Kentucky\\
Lexington, KY 40506, USA}
\email{bertguillou@uky.edu}

\author{Daniel C. Isaksen}
\address{Department of Mathematics\\ Wayne State University\\
Detroit, MI 48202, USA}
\email{isaksen@wayne.edu}
\thanks{The first author was supported by Simons Collaboration Grant 282316.
The second author was supported by NSF grant DMS-1202213.}

\subjclass[2000]{14F42, 55T15, 55S10}

\keywords{cohomology of the Steenrod algebra, Adams spectral sequence,
Adams-Novikov spectral sequence, stable motivic homotopy group,
$\eta$-local motivic sphere}

\begin{abstract}
We compute the $h_1$-localized cohomology of the motivic Steenrod algebra over $\C$.
This serves as the input to an Adams spectral sequence that computes the
motivic stable homotopy groups of the $\eta$-local motivic sphere.
We compute some of the Adams differentials, and we state a conjecture about
the remaining differentials. 
\end{abstract}

\date{\today}

\maketitle

\newtheorem{thm}{Theorem}[section]
\newtheorem*{thmA}{Theorem A}
\newtheorem{prop}[thm]{Proposition}
\newtheorem{lemma}[thm]{Lemma}
\newtheorem{cor}[thm]{Corollary}
\newtheorem{conj}[thm]{Conjecture}
\newtheorem{quest}{Question}
\theoremstyle{definition}
\newtheorem{rmk}[thm]{Remark}
\newtheorem{eg}[thm]{Example}
\newtheorem{defn}[thm]{Definition}
\newtheorem{notn}[thm]{Notation}

\newenvironment{pf}{\begin{proof}}{\end{proof}}

\newcounter{marker}

\section{Introduction}

Consider the Hopf map $\eta:\A^2\setminus\{0\} \rtarr \bP^1$
that takes $(x,y)$ to $[x:y]$.
In motivic homotopy theory, $\A^2\setminus\{0\}$ and $\bP^1$
are models for the motivic spheres $S^{3,2}$ and $S^{2,1}$ respectively.
Therefore, $\eta$ represents an element of the stable motivic
homotopy group $\pi_{1,1}$.

Computations of motivic stable homotopy groups share many similarities to the
classical computations, but the motivic computations also exhibit
``exotic" non-classical phenomena.  One of the first examples is that
$\eta$ is not nilpotent, i.e., $\eta^k$ is non-zero for all $k$
\cite{Morel}.

Working over $\C$ (or any algebraically closed field of characteristic zero),
we have an Adams spectral sequence for computing motivic stable
$2$-complete homotopy groups with good convergence properties
\cite{Morel2} \cite{DI} \cite{HKO}.
The $E_2$-page of this spectral sequence is the cohomology
$\Ext_A$ of the motivic Steenrod algebra $A$.
In the Adams spectral sequence, $\eta$ is detected by the element
$h_1$ of $\Ext_A$.
The failure of $\eta$ to be
nilpotent is detected by the fact that $h_1^k$ is a non-zero permanent cycle
for all $k$.

A further investigation of the motivic Adams $E_2$-page reveals a number of
other classes that survive $h_1$-localization, i.e., classes $x$ such that $h_1^k x$
is non-zero for all $k$.  The first few are $c_0$ in the $8$-stem,
$P h_1$ in the $9$-stem, $d_0$ in the $14$-stem, and $e_0$ in the $17$-stem.
A fairly predictable pattern emerges, involving classes that all map
non-trivially to the cohomology of motivic $A(2)$ \cite{IMotA(2)}.

However, in the 46-stem, a surprise occurs.  The class $B_1$ is $h_1$-local.
This element is not detected by the cohomology of motivic $A(2)$.
At this point, it has become clear that an algebraic computation of 
the $h_1$-localized cohomology $\Ext_A[h_1^{-1}]$ 
is an interesting and non-trivial problem.

The first goal of this article is calculate $\Ext_A[h_1^{-1}]$.
We will show that it is a polynomial algebra over
$\F_2 [h_1^{\pm 1}]$ on infinitely many generators.

\begin{thm}\label{thmA} The $h_1$-localized algebra $\Ext_A[h_1^{-1}]$ is a polynomial algebra over $\F_2[h_1^{\pm 1}]$ on generators $v_1^4$ and $v_n$ for $n\geq 2$,
where:
\begin{enumerate}
\item
$v_1^4$ is in the 8-stem and has Adams filtration 4.
\item
$v_n$ is in the $(2^{n+1} - 2)$-stem and has Adams filtration 1.
\end{enumerate}
\end{thm}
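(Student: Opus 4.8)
The plan is to compute $\Ext_A[h_1^{-1}]$ by running an $h_1$-localized version of the motivic May spectral sequence. Recall from Dugger--Isaksen that over $\C$ the motivic May spectral sequence converges to $\Ext_A$ and has $E_1$-page an explicit graded-commutative algebra over $\F_2[\tau]$ on generators $h_{i,j}$ ($i\geq 1$, $j\geq 0$) together with the classes contributed by the $\tau_i$'s, subject to the relation encoding $\tau_i^2 = \tau\xi_{i+1}$, and that $h_1 = h_{1,1}$ is a permanent cycle detecting $\eta$. Since $h_1$ is a polynomial generator of $E_1$, inverting it is harmless at the level of $E_1$: $E_1[h_1^{-1}]$ is again an explicit $\F_2[h_1^{\pm 1},\tau]$-algebra, and the May differentials, being derivations determined by the coproduct of $\mathcal{A}_{**}$, descend to it. So the first task is simply to carry out this localized spectral sequence.

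The computational steps are: (1) record the May differentials on the generators, which come from the cobar differential --- in particular $d[\tau_n]$ is a sum of terms $[\xi_{n-i}^{2^i}\otimes\tau_i]$, together with $d_1 h_{2,j} = h_{1,j}h_{1,j+1}$ and its higher-$i$ analogues, plus the $\tau$-Bockstein-type differential coming from $\tau_i^2 = \tau\xi_{i+1}$; (2) show that after inverting $h_1$ the localized May spectral sequence is drastically simpler than the unlocalized one --- the relation $h_{1,1}h_{1,j+1} = d_1 h_{2,j}$ forces $h_{1,j}=0$ in the localized $E_2$ for the relevant $j$, the differentials on the $h_{i,j}$ with $i\geq 2$ then telescope, and the localized spectral sequence collapses at a finite page; (3) read off $E_\infty$ and resolve multiplicative extensions. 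The surviving generators should be exactly $h_1^{\pm 1}$, one class $v_1^4$ in the $8$-stem of Adams filtration $4$ (a May survivor genuinely distinct from $h_1^4$, which lives in the $4$-stem, and which one identifies with the Adams $v_1$-periodicity class), and for each $n\geq 2$ one class $v_n$ in the $(2^{n+1}-2)$-stem of Adams filtration $1$, represented by an $h_1$-localized cocycle built from $[\tau_n]$ corrected by lower-May-filtration terms. One then checks, using the internal degree and weight, that the squares $v_n^2$ and $(v_1^4)^2$ are not $h_1$-divisible into new classes and that there are no hidden relations, so that $\Ext_A[h_1^{-1}]$ is genuinely the polynomial algebra $\F_2[h_1^{\pm 1}, v_1^4, v_2, v_3, \dots]$.

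The main obstacle is step (2): controlling \emph{all} the higher May differentials in the localized spectral sequence and proving collapse, since a priori there are infinitely many generators $h_{i,j}$ and the unlocalized May spectral sequence famously never collapses. The decisive simplification to exploit is that inverting $h_1$ annihilates almost all of the "Koszul tail" --- once the $h_{1,j}$ vanish for $j\neq 1$, the Koszul differential on the $h_{i,j}$ with $i\geq 2$ collapses a huge acyclic piece, leaving only a small explicit subcomplex --- so that the localized homology is computable in closed form. I expect the two most delicate points to be (a) pinning down $v_1^4$: both that it survives and that no subsequent differential hits it or emanates from it; and (b) establishing that the classes $v_n$ are mutually algebraically independent and support no differentials among themselves, which is exactly what produces the polynomial structure claimed in the theorem.
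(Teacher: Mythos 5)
Your proposal is to run the $h_1$-localized motivic May spectral sequence and argue that it collapses quickly; this is not the paper's route, and as written it contains a gap that is fatal to the plan. The paper does study the localized May spectral sequence (in its Section~\ref{sec:MaySS}), and what it finds there contradicts your step (2) on two counts. First, inverting $h_1$ only kills $h_0$ and $h_2$ (via $d_1(h_{20})=h_0h_1$ and $d_1(h_{21})=h_1h_2$); the classes $h_3,h_4,h_5,\dots$ survive to $E_2$, and after a change of generators the localized $E_1$-page splits as $F_1\otimes_{\F_2}G_1$ where $G_1$ is isomorphic, as a differential graded algebra, to the \emph{entire} classical May $E_1$-page (Proposition~\ref{prop:cl-G1}). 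So far from leaving "only a small explicit subcomplex," localization leaves a full copy of the classical May spectral sequence inside the problem. Second, the localized spectral sequence does not collapse at a finite page: there are nonzero differentials $d_4(g)=h_1^4h_4$, $d_8(g^2)=h_1^8h_5$, $d_{16}(g^4)=h_1^{16}h_6$, $d_{32}(g^8)=h_1^{32}h_7$, and so on, together with many sporadic $d_6$, $d_{10}$, $d_{12}$, $d_{14}$, $d_{18}$ differentials. In the paper these higher differentials are pinned down only by \emph{already knowing} the answer for $\Ext_A[h_1^{-1}]$ and working backwards, so the May spectral sequence cannot serve as an independent proof of the theorem without a separate mechanism for controlling infinitely many high-page differentials.

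The paper's actual proof avoids all of this by filtering the dual Steenrod algebra $A_{*,*}$ by the sub-Hopf algebras $B_i$ generated by all the $\xi_k$ together with $\tau_0,\dots,\tau_i$. The base case $B_{-1}$ (polynomial on the $\xi_k$) is isomorphic to $A_*^{\cl}\otimes_{\F_2}\bM_2$ with $h_1$ corresponding to classical $h_0$, so Adams's vanishing line gives $\Ext_{B_{-1}}[h_1^{-1}]\iso\bM_2[h_1^{\pm1}]$ in one stroke. One then runs the Cartan--Eilenberg spectral sequence for each cocentral extension $B_{n-1}\rtarr B_n\rtarr E(\tau_n)$, adjoining one class $v_n$ at a time; the only nontrivial differentials occur at the $n=0,1$ stages ($d_2(v_1)=h_1v_0$ and $d_3(v_1^2)=\tau h_1^3$, the latter verified by an explicit cobar calculation), and for $n\geq2$ each spectral sequence collapses for degree reasons. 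Passing to the colimit gives the polynomial algebra with no room for hidden relations. If you want to salvage your approach, you would need to import the classical computation wholesale through the isomorphism $G_1\iso E_1^{\cl}$ and still confront the infinite tower of differentials killing the $h_n$; the $B_i$-filtration is the device that makes the infinite-dimensional bookkeeping tractable.
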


Although it is simple to state, this is actually a surprising answer.
The elements $d_0$, $e_0$, and $e_0 g$ are indecomposable
elements of $\Ext_A$ that are all $h_1$-local.  From consideration of the
May spectral sequence, or from the cohomology of $A(2)$, one might
expect to have the relation $e_0^3 + d_0 \cdot e_0 g = 0$.

In the terms of Theorem \ref{thmA}, $e_0^3 + d_0 \cdot e_0 g$
corresponds to $h_1^9 v_3^3 + h_1^9 v_2^2 v_4$.  Theorem \ref{thmA} says that
this expression is non-zero after $h_1$-localization, so it is
non-zero before localization as well.
The only possibility is that 
$e_0^3 + d_0 \cdot e_0 g$ equals $h_1^5 B_1$.

The relation $e_0^3 + d_0 \cdot e_0 g = h_1^5 B_1$ 
in $\Ext_A$ is hidden
in the motivic May spectral sequence.  As described in \cite{Istems},
it is tightly 
connected to the classical Adams differential $d_3(h_1 h_5 e_0) = h_1^2 B_1$
\cite[Corollary 3.6]{BJM}.

Our $h_1$-local calculation surely implies
other similarly exotic relations in higher stems.  We do not yet possess a
sufficiently detailed understanding of $\Ext_A$ in that range, so we cannot
identify any explicit examples with certainty.  However, we expect to see a 
hidden relation $e_0 (e_0 g)^2 + d_0 \cdot e_0 g^3 = h_1^5 \cdot g^2 B_1$
in $\Ext_A$ in the 91-stem.  Here, we anticipate that $e_0 g^3$ and $g^2 B_1$
are indecomposable elements.

So far, we have only discussed the entirely algebraic question of computing
$\Ext_A[h_1^{-1}]$, which informs us about the structure of $\Ext_A$.
But $\Ext_A[h_1^{-1}]$ is also 
the $E_2$-page of an Adams spectral sequence that converges to the
$2$-complete motivic stable homotopy groups of the $\eta$-local motivic sphere
$S^{0,0}[\eta^{-1}]$, which is the homotopy colimit of the sequence
\[
\xymatrix@1{
S^{0,0} \ar[r]^\eta & 
S^{-1,-1} \ar[r]^\eta &
S^{-2,-2} \ar[r]^\eta &
\cdots.
}
\]
In order to compute $\pi_{*,*} S^{0,0}[\eta^{-1}]$,
we thus only need to compute Adams differentials on $\Ext_A[h_1^{-1}]$ .
There are indeed non-trivial differentials.
In the non-local case, we know that 
$d_2(e_0) = h_1^2 d_0$ and
$d_2(e_0 g) = h_1^2 e_0^2$ \cite{Istems}.
This implies the analogous $h_1$-local differentials
$d_2(v_3) = h_1 v_2^2$ and
$d_2(v_4) = h_1 v_3^2$.

Unfortunately, we have not been able to identify all of the Adams differentials.
We expect the answer to turn out as stated in Conjecture \ref{mainConj}.

\begin{conj}\label{mainConj}
For all $k \geq 3$, there is an Adams differential
$d_2 (v_k ) = h_1 v_{k-1}^2$.
\end{conj}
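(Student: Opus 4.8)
The plan is to promote the known differentials $d_2(v_3) = h_1 v_2^2$ and $d_2(v_4) = h_1 v_3^2$ to all $k$ by a ``capture'' argument, using multiplicativity together with an independent handle on the abutment $\pi_{*,*}S^{0,0}[\eta^{-1}]$. Since $d_2$ is an $\F_2[h_1^{\pm 1}]$-linear derivation and, by Theorem~\ref{thmA}, $E_2 = \F_2[h_1^{\pm 1}][v_1^4, v_2, v_3, \dots]$, it suffices to compute $d_2$ on each polynomial generator; a degree count shows that $v_1^4$ and $v_2$ are permanent cycles (detecting $\eta^{-1}\mu$ and $\eta^{-1}\sigma$ in the notation of the Adams--Novikov computation $\pi_{*,*}S^{0,0}[\eta^{-1}] \iso \F_2[\eta^{\pm 1}, \sigma, \mu]/\sigma^2$), so everything comes down to Conjecture~\ref{mainConj}.

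Fix $k \ge 5$ and assume the conjecture for smaller indices. A bookkeeping argument with stems, weights, and Adams filtrations --- feasible because Theorem~\ref{thmA} pins down all of $\Ext_A[h_1^{-1}]$ --- shows two things. First, the only possible nonzero value of a differential originating on $v_k$ at the $E_2$-page is $h_1 v_{k-1}^2$, the key point being that $2^a + 2^b = 2^k$ forces $a = b = k-1$, so $v_{k-1}^2$ is the unique weight-$(2^k-2)$ monomial of Adams filtration $2$. Second, if $d_2(v_k) = 0$, then every weight-$(2^k - 2)$ monomial of higher filtration that could receive a later differential from $v_k$ is already a $d_2$-boundary, using the known and inductively assumed differentials on $v_3, \dots, v_{k-1}$; hence $v_k$ would be a permanent cycle. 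But a comparison with the abutment shows $v_k$ cannot be a permanent cycle: in the stem and weight of $v_k$ the group $\pi_{*,*}S^{0,0}[\eta^{-1}]$ is one-dimensional and detected in Adams filtration far above $1$. Therefore $d_2(v_k) \ne 0$, and by the first point $d_2(v_k) = h_1 v_{k-1}^2$, completing the induction.

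The main obstacle is that this argument is not self-contained: it rests on an independent computation of $\pi_{*,*}S^{0,0}[\eta^{-1}]$ (and on convergence of the localized spectral sequence) in order to conclude that $v_k$ is not a permanent cycle. To make the proof internal one would instead need to compute $d_2(v_k)$ directly, which requires explicit cocycle representatives for the $v_k$ --- these are not readily produced by the (localized) May spectral sequence argument behind Theorem~\ref{thmA}. A promising substitute is to pass to the algebraic Novikov spectral sequence, where $v_k$ is represented by the Hazewinkel (or Araki) generator and the right-unit formula $\eta_R(v_k) \equiv v_k + v_{k-1}\xi_1^{2^{k-1}} \pmod{(\tau, v_1, \dots, v_{k-2})}$ computes the leading term of its differential; the remaining and genuinely hard step is then to show that, after $h_1$-localization, the term $v_{k-1}\xi_1^{2^{k-1}}$ is rewritten as $h_1 v_{k-1}^2$ --- a hidden relation of the same character as $e_0^3 + d_0 \cdot e_0 g = h_1^5 B_1$. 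Alternatively, one could try to construct a multiplicative ``Frobenius'' self-operation $\Phi$ of $\Ext_A[h_1^{-1}]$ and its differentials with $\Phi(h_1) = h_1$ and $\Phi(v_{k-1}) \equiv v_k$ modulo decomposables, and deduce the inductive step from $d_2(v_k) = \Phi(d_2(v_{k-1}))$ once the error terms are controlled.
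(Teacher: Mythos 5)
This statement is a \emph{conjecture} in the paper, not a theorem: the authors explicitly state that they were unable to establish the differentials $d_2(v_k)=h_1v_{k-1}^2$ for $k\geq 5$. What the paper actually proves is only the cases $k=3$ and $k=4$ (Proposition~\ref{prop:Adams-d2}, by pulling back the unlocalized differentials $d_2(e_0)=h_1^2d_0$ and $d_2(e_0g)=h_1^2e_0^2$ through the localization map), plus a conditional statement: assuming the existence of a motivic modular forms spectrum $\mmf$, Conjecture~\ref{mainConj} is equivalent to Conjecture~\ref{conj:restriction} about the restriction map $\Ext_A[h_1^{-1}]\rtarr\Ext_{A(2)}[h_1^{-1}]$. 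Your proposal therefore cannot be ``the same approach as the paper's proof,'' because there is no proof in the paper to match.

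More importantly, your argument has a genuine circularity at its load-bearing step. You rule out $d_2(v_k)=0$ by asserting that ``a comparison with the abutment shows $v_k$ cannot be a permanent cycle,'' citing the computation $\pi_{*,*}S^{0,0}[\eta^{-1}]\iso\F_2[\eta^{\pm1},\mu,\epsilon]/\epsilon^2$ (which, incidentally, you misname: the class detected by $c_0$ is $\epsilon$, not $\sigma$). But in this paper that description of the abutment is Conjecture~\ref{mainConj2}, and it is \emph{deduced from} Conjecture~\ref{mainConj}; no independent computation of $\pi_{*,*}S^{0,0}[\eta^{-1}]$ is available here. So the one step that forces $d_2(v_k)\neq 0$ assumes the conclusion. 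You acknowledge this yourself, and your fallback suggestions (explicit cocycles, or the right-unit formula $\eta_R(v_k)$ in an algebraic Novikov setting, where the hard point is identifying $v_{k-1}\xi_1^{2^{k-1}}$ with $h_1v_{k-1}^2$ after localization) are reasonable directions --- the second is close in spirit to how the conjecture was eventually settled --- but as written they are programmatic sketches with the decisive step explicitly left open. The exhaustive degree bookkeeping in your first two claims is plausible given Theorem~\ref{thmA} but is also only asserted, not carried out. In sum: the proposal correctly identifies that $h_1v_{k-1}^2$ is the only possible target and correctly diagnoses what is missing, but it does not constitute a proof, and the paper offers none either.
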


Conjecture \ref{mainConj} has the following immediate consequences.

\begin{conj}
\label{mainConj2}
\mbox{}
\begin{enumerate}
\item
The $E_\infty$-page of the $h_1$-local Adams spectral sequence is
\[ \F_2[h_1^{\pm 1}][v_1^4,v_2]/v_2^2.\]
\item
The motivic stable homotopy groups of the $\eta$-local motivic sphere
are
\[
\pi_{*,*}(S^{0,0}[\eta^{-1}]) \iso \F_2[\eta^{\pm 1},\mu,\epsilon]/\epsilon^2,
\]
where $\eta$ has degree $(1,1)$;
$\mu$ has degree $(9,5)$;
and $\epsilon$ has degree $(8,5)$.
\end{enumerate}
\end{conj}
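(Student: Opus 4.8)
The plan is to derive both statements from Conjecture \ref{mainConj} by an essentially formal argument: compute $E_3 = H(E_2, d_2)$ of the $h_1$-local Adams spectral sequence, verify that no room is left for higher differentials, and then invoke the convergence of that spectral sequence recalled in the introduction.

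Granting Conjecture \ref{mainConj}, the $d_2$-differential on $E_2 = \Ext_A[h_1^{-1}] = \F_2[h_1^{\pm1}][v_1^4, v_2, v_3, \dots]$ is completely known: it is the $\F_2[h_1^{\pm1}]$-linear derivation with $d_2(v_k) = h_1 v_{k-1}^2$ for $k\geq 3$ and $d_2(v_2) = d_2(v_1^4) = 0$, the two vanishings following from a count of stems and Adams filtrations (the groups in stem $5$, filtration $3$, and stem $7$, filtration $6$, where $d_2 v_2$ and $d_2 v_1^4$ would land, are both zero, since for an arbitrary monomial the quantity ``stem minus filtration'' forces a nonnegative integer combination of $4, 5, 13, 29, \dots$ to equal $1$ or $2$). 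The first genuine task is the homology computation. I would split off the $d_2$-cycle $v_1^4$, which occurs in no $d_2$-value, so $E_3 = \F_2[h_1^{\pm1}][v_1^4] \otimes_{\F_2[h_1^{\pm1}]} H(S)$ with $S = \F_2[h_1^{\pm1}][v_2, v_3, \dots]$, and then realize $S$ as the increasing union of the sub-differential-algebras $S_N = \F_2[h_1^{\pm1}][v_2, \dots, v_N]$ (each closed under $d_2$ since $d_2 v_N = h_1 v_{N-1}^2 \in S_{N-1}$), so $H(S) = \colim_N H(S_N)$. By induction one shows $H(S_N) \iso \bigl(\F_2[h_1^{\pm1}][v_2]/v_2^2\bigr)[\,\overline{v_N^2}\,]$, polynomial on the class of the cycle $v_N^2$: at the inductive step, filter $S_N = S_{N-1}[v_N]$ by powers of $v_N$; the differential lowers this filtration by $0$ or $1$, the associated spectral sequence has $E_1 = H(S_{N-1})[v_N]$ with the single remaining rank-one Koszul-type differential $v_N \mapsto h_1[v_{N-1}^2]$, and since $[v_{N-1}^2]$ is a polynomial generator (hence a non-zero-divisor) of $H(S_{N-1})$ its homology is $\bigl(H(S_{N-1})/(v_{N-1}^2)\bigr)[\,\overline{v_N^2}\,]$, the auxiliary spectral sequence degenerating for degree reasons. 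In the colimit each inclusion kills $[v_{N-1}^2] = h_1^{-1}d_2 v_N$, so $H(S) = \F_2[h_1^{\pm1}][v_2]/v_2^2$ and therefore $E_3 = \F_2[h_1^{\pm1}][v_1^4, v_2]/v_2^2$.

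To finish part (1) I would show $E_3 = E_\infty$. The point is that $E_3$ is \emph{thin}: subtracting the two linear conditions on a monomial $h_1^a(v_1^4)^j v_2^\epsilon$ in fixed stem $n$ and Adams filtration $s$ gives $4j + 5\epsilon = n - s$, which pins down $(j,\epsilon)$ with $\epsilon \in \{0,1\}$, hence $a$; so $E_3$ is at most one-dimensional in each (stem, filtration) bidegree, and the surviving monomial is a unit of $\F_2[h_1^{\pm1}]$ precisely when $n = s$. Since $d_r$ is $\F_2[h_1^{\pm1}]$-linear ($h_1$ being a permanent cycle), it suffices to check it kills the algebra generators $v_1^4$ and $v_2$; by thinness the only candidate nonzero values for $d_r v_1^4$, $d_r v_2$ with $r\geq 3$ are the units $h_1^7$ (at $r=3$) and $h_1^5$ (at $r=4$), and a unit cannot be a boundary because $h_1$ acts invertibly on every page. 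Hence $d_r \equiv 0$ on $E_3$ for $r\geq 3$ by the Leibniz rule, and $E_\infty = E_3 = \F_2[h_1^{\pm1}][v_1^4, v_2]/v_2^2$. For part (2), the $h_1$-local Adams spectral sequence converges to $2$-completed $\pi_{*,*}(S^{0,0}[\eta^{-1}])$, so $E_\infty$ is its associated graded; thinness (one-dimensional, and in a single filtration, in each motivic bidegree) removes all additive extension problems, and lifting classes $\eta$, $\mu \in \pi_{9,5}$, $\epsilon \in \pi_{8,5}$ detected by $h_1$, $h_1 v_1^4$, $h_1^2 v_2$ (their bidegrees being read off from the motivic grading of $\Ext_A[h_1^{-1}]$) gives a map $\F_2[\eta^{\pm1}, \mu, \epsilon] \to \pi_{*,*}(S^{0,0}[\eta^{-1}])$ that factors through $\F_2[\eta^{\pm1}, \mu, \epsilon]/\epsilon^2$ (since $\pi_{16,10}$, where $\epsilon^2$ would live, vanishes by a degree count in $E_\infty$) and induces the identity on $E_\infty = \F_2[h_1^{\pm1}][v_1^4, v_2]/v_2^2$, hence is an isomorphism.

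The one place where genuine care is needed — granting Conjecture \ref{mainConj}, which is of course the real content — is the $E_3$-computation: one must verify that the homology of this infinitely generated polynomial differential algebra collapses all the way down to $\F_2[h_1^{\pm1}][v_1^4, v_2]/v_2^2$, with nothing surviving from the generators $v_n$ for $n \geq 3$. Organizing it as the above colimit of rank-one Koszul computations, and checking the degeneration of the auxiliary $v_N$-adic spectral sequences together with the preservation of the ``non-zero-divisor'' inductive hypothesis, is the main obstacle; the rest is degree bookkeeping and the convergence statement already in hand.
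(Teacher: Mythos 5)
Your proposal is correct and follows the same route as the paper's (very terse) proof: compute $E_3$ from the conjectured $d_2$, kill higher differentials by degree counting, and observe there are no extension problems; your contribution is to actually carry out the homology computation of the infinitely generated polynomial DGA, which you do correctly via the colimit of Koszul-type steps. The one soft spot is your dismissal of the candidate targets $h_1^7$ and $h_1^5$: the assertion that ``a unit cannot be a boundary because $h_1$ acts invertibly on every page'' is not by itself a proof, since invertibility of $h_1$ is compatible with the entire page dying (one must additionally know the abutment is nonzero, e.g.\ that the $\eta^k$ are nonzero $2$-complete classes). This is unnecessary, however, because the motivic weight --- which you discarded in passing to the $(\text{stem},\text{filtration})$ bigrading --- already rules these candidates out: $d_3(v_1^4)$ lies in weight $4$ while $h_1^7$ has weight $7$, and $d_4(v_2)$ lies in weight $3$ while $h_1^5$ has weight $5$, so in the trigraded spectral sequence the target groups are simply zero (this is what the paper means by ``for degree reasons'').
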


\begin{proof}
Given the Adams differentials proposed in Conjecture \ref{mainConj},
we can compute that the $h_1$-local $E_3$-page
is equal to 
$\F_2[h_1^{\pm 1}][v_1^4,v_2]/v_2^2$.
For degree reasons, there are no possible higher differentials,
so this expression is also equal to the $h_1$-local $E_\infty$-page.

There are no possible hidden extensions in $E_\infty$, so we obtain
$\pi_{*,*}(S^{0,0}[\eta^{-1}])$ immediately.
\end{proof}

\begin{rmk}
The $h_1$-localization of the element $Ph_1$ of $\Ext_A$ is $h_1 v_1^4$,
and $\mu$ is the standard notation for the element of $\pi_{9,5}$
detected by $P h_1$.
The $h_1$-localization of the element $c_0$ of $\Ext_A$ is $h_1^2 v_2$,
and $\epsilon$ is the standard notation for the element of $\pi_{8,5}$
detected by $c_0$.
\end{rmk}

We present one more consequence of Conjecture \ref{mainConj}.

\begin{thm}
\label{thmANSS}
The following are equivalent:
\begin{enumerate}
\item
Conjecture \ref{mainConj} holds.
\item
At $p=2$, the $\alpha_1$-localization of the classical Adams-Novikov spectral
sequence $E_2$-page is a free $\F_2 [\alpha_1^{\pm 1}]$-module 
with basis consisting of elements of the form $\alpha_{k/b}$
for $k = 1$ and $k \geq 3$.
\end{enumerate}
\end{thm}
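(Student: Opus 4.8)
The plan is to compare the $h_1$-local motivic Adams spectral sequence directly with the $\alpha_1$-localization of the algebraic Novikov spectral sequence, whose abutment is the Adams--Novikov $E_2$-page $\Ext_{BP_*BP}(BP_*,BP_*)$ at $p=2$. (Note that statement (2) really concerns this $E_2$-page as an algebraic object: classically $\eta$ is nilpotent, so the classical $\eta$-local sphere is contractible and the $\alpha_1$-local Adams--Novikov spectral sequence converges to zero.) The essential input is the comparison between the motivic Adams spectral sequence over $\C$ and the algebraic Novikov spectral sequence: the motivic Adams spectral sequence of the cofiber $C\tau$ is isomorphic, with a regrading and compatibly with products, to the algebraic Novikov spectral sequence, and under this isomorphism $h_1$ corresponds to $\alpha_1$. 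To bring the honest $h_1$-local motivic Adams spectral sequence, with $E_2$-page $\Ext_A[h_1^{-1}]$ as in Theorem~\ref{thmA}, into this picture, I would first observe that $\tau$ acts trivially on $\Ext_A[h_1^{-1}]$: inverting $\tau$ recovers the classical cohomology of the Steenrod algebra with $\tau$ adjoined, in which $h_1$ is nilpotent, so $\Ext_A[h_1^{-1}][\tau^{-1}]=0$, and a short check of Adams filtration, stem, and weight shows that $\F_2[h_1^{\pm 1}][v_1^4,v_2,v_3,\dots]$ has no class whatsoever in the internal degree of $\tau$. Consequently the $\tau$-Bockstein long exact sequence, after inverting $h_1$, collapses to short exact sequences exhibiting the $h_1$-local motivic Adams $E_2$-page of $C\tau$ as an extension of $\Ext_A[h_1^{-1}]$ by a shift of itself, so that the $h_1$-local motivic Adams differentials of the sphere and of $C\tau$ determine one another.

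I would then localize the comparison. Since $h_1$ and $\alpha_1$ are permanent cycles, inverting them is compatible with passage through the pages, so the $h_1$-local motivic Adams spectral sequence of $C\tau$ is isomorphic to the $\alpha_1$-local algebraic Novikov spectral sequence, with abutment the $\alpha_1$-localized Adams--Novikov $E_2$-page. Its $E_2$-page is built from $\F_2[h_1^{\pm 1}][v_1^4,v_2,v_3,\dots]$, and I would pin down its differentials by a degree count: matching Adams filtration, stem, and weight, the target bidegree of $d_2(v_k)$ contains exactly one nonzero class, namely $h_1 v_{k-1}^2$, and, by the degree considerations recorded in the proof of Conjecture~\ref{mainConj2}, there are no higher differentials. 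Hence the whole $\alpha_1$-local algebraic Novikov spectral sequence is governed by the subset $S=\{k\ge 3 : d_2(v_k)\neq 0\}$; distinct subsets $S$ give $E_\infty$-pages that differ in the bidegree of $v_k$ for $k\notin S$; and Conjecture~\ref{mainConj} is precisely the statement $S=\{k\ge 3\}$.

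Finally I would match the two descriptions of the $\alpha_1$-local Adams--Novikov $E_2$-page. If Conjecture~\ref{mainConj} holds, the $E_\infty$-page of the $\alpha_1$-local algebraic Novikov spectral sequence is the $C\tau$-doubled copy of $\F_2[h_1^{\pm 1}][v_1^4,v_2]/v_2^2$, and I would identify this, bidegree by bidegree, with $\Ext_{BP_*BP}(BP_*,BP_*)[\alpha_1^{-1}]$: the surviving classes are exactly the $\alpha_1$-power multiples of the $\alpha_{k/b}$ with $k=1$ and $k\ge 3$, the absence of $k=2$ reflecting the relation $v_2^2=0$ forced by $d_2(v_3)=h_1 v_2^2$. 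Conversely, if the $\alpha_1$-local Adams--Novikov $E_2$-page is the free $\F_2[\alpha_1^{\pm 1}]$-module with that basis, then comparing Poincar\'e series in each bidegree forces the $E_\infty$-page of the $\alpha_1$-local algebraic Novikov spectral sequence to have the corresponding size, which by the rigidity of the previous paragraph forces $S=\{k\ge 3\}$, i.e.\ Conjecture~\ref{mainConj}. I expect the main obstacle to be precisely this last step: one must carry the regrading of the motivic/algebraic-Novikov comparison through the $\alpha_1$-localization and through the $C\tau$-doubling carefully enough to see that the survivors are literally the $\alpha_{k/b}$-family with $k=2$ omitted, with no spurious extra classes appearing, and to have the comparison available in the precise form needed here.
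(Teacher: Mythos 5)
Your argument follows essentially the same route as the paper: both pass through $C\tau$, identify its $\eta$-localized homotopy with the $\alpha_1$-localized Adams--Novikov $E_2$-page, and use the vanishing of $\tau$ after inverting $h_1$ (equivalently $\tau\eta^4=0$) to split the answer into two copies of the $\eta$-local sphere computation, which Conjecture~\ref{mainConj} governs via Conjecture~\ref{mainConj2}. The paper's version is shorter because it works directly with homotopy groups and the spectrum-level splitting $C\tau[\eta^{-1}]\simeq S^{0,0}[\eta^{-1}]\vee S^{1,-1}[\eta^{-1}]$, whereas you carry the comparison through spectral-sequence pages via a $\tau$-Bockstein and a differential-by-differential degree count; the content is the same.
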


Here $b$ is an integer that depends on $k$.  
The point is that $\alpha_{k/b}$ is the generator of the classical Adams-Novikov
$E_2$-page in degree $(2k-1,1)$.

\begin{proof}
Recall that $\tau$ is an element of $\pi_{0,-1}$.
Because $\tau \eta^4$ is zero, we know that the
$\eta$-localization $C\tau[\eta^{-1]}$ of the cofiber $C\tau$ of $\tau$
splits as $S^{0,0}[\eta^{-1}] \vee S^{1,-1}[\eta^{-1}]$.
Therefore,
$\pi_{*,*}(C\tau[\eta^{-1}])$ is the same as two copies of 
$\pi_{*,*}(S^{0,0}[\eta^{-1}])$.

We explain in \cite{Istems} that $\pi_{*,*} (C\tau)$
is equal to the classical Adams-Novikov $E_2$-page.
Therefore, $\pi_{*,*} (C\tau[\eta^{-1}])$
is the same as the $\alpha_1$-localization of the 
Adams-Novikov $E_2$-page.
\end{proof}

The origin of this work lies in the first author's attempt to analyze the
Adams spectral sequence beyond the 45-stem.  The $h_1$-local calculations 
discussed in this article are a helpful tool in the analysis of Adams differentials.
We expect that the $h_1$-local calculations
will continue to be a useful tool in the further analysis of Adams differentials.
For example,
our work leads us to 
anticipate an Adams differential $d_2(e_0g^3) = h_1^2(e_0g)^2$ in the 77-stem.

In this article, we are working exclusively in motivic homotopy theory
over $\C$.  A natural extension is to consider $h_1$-local and $\eta$-local
calculations over other fields.  Preliminary calculations over $\R$ show that the
picture is more complicated.  We plan to explore this in more detail
in future work.

The questions studied in this article become trivial in the classical situation,
where $h_1^4$ is zero in the cohomology of the classical Steenrod algebra,
and $\eta^4$ is zero in the classical 4-stem.  
This is consistent with the principle
that 
$\tau$-localization corresponds to 
passage from the motivic to the classical situations \cite{Istems},
and that $\tau h_1^4$ and $\tau \eta^4$ are both zero motivically.

However,
$\eta$ is not nilpotent in $\Z/2$-equivariant stable homotopy groups.
The equivariant analogues of our calculations are interesting open questions.

Our work raises the question of why Nishida's nilpotence theorem
\cite{Nishida} fails in motivic homotopy theory.
One might wonder whether $\eta$ in $\pi_{1,1}$ 
can be non-nilpotent because it has ``simplicial dimension" $0$.
However, this cannot be the full explanation since
the element $\mu$ of $\pi_{9,5}$ detected by 
$P h_1$ is also not nilpotent, and $\mu$ has simplicial dimension $4$.
In fact, the elements $\mu_{8k+1}$ of $\pi_{8k+1,4k}$ detected by $P^k h_1$
are all not nilpotent.  We expect that these are the only elements
of $\pi_{*,*}$ that fail to be nilpotent.

\subsection{Organization}

Section~\ref{sec:background} contains a review of the motivic Steenrod algebra and sets our notation. 
Section \ref{sec:CohomA} computes $\Ext_A[h_1^{-1}]$,
as stated in Theorem \ref{thmA}.
For completeness, we also discuss $\Ext_{A(2)}[h_1^{-1}]$.

Section~\ref{sec:MaySS} discusses
the same computation from the point of view of the motivic May spectral sequence. 
The point of this section is that it allows us to analyze 
in Section~\ref{sec:LocMap} the localization map
$\Ext_A\rtarr \Ext_A[h_1^{-1}]$ in detail through a range.
This leads to some hidden relations in $\Ext_A$ that are needed in \cite{Istems}.
The localization map is essential for deducing information about
Adams differentials in $\Ext_A$ from Adams differentials in $\Ext_A[h_1^{-1}]$.
We also consider the May spectral sequence and the localization map 
for $A(2)$ in Sections \ref{sec:MaySS} and \ref{sec:LocMap}.  
These sections are intended to be read in conjunction with the charts
in \cite{GI}.

Section~\ref{sec:ASS} gives some computations of Adams differentials in support of Conjecture~\ref{mainConj}.  We also discuss the role of a
speculative ``motivic modular forms" spectrum.

Much of the data for our computations, especially regarding the May spectral sequence, is given in tables to be found in Section~\ref{sec:Tables}.

\subsection*{Acknowledgements}

The authors thank Haynes Miller for a conversation that led to the proof
of Theorem \ref{thmA}.

\section{Background}\label{sec:background}

We continue with notation from \cite{Istems} as follows:
\begin{enumerate}
\item
$\bM_2$ is the motivic cohomology of $\C$ with $\F_2$ coefficients.
\item
$A$ is the mod 2 motivic Steenrod algebra over $\C$,
and $A_{*,*}$ is its dual.
\item
$A(n)$ is the $\bM_2$-subalgebra of $A$ generated by
$\Sq^1$, $\Sq^2$, $\Sq^4$, \ldots, $\Sq^{2^n}$,
and $A(n)_*$ is its dual.
\item
$\Ext_A$ is the trigraded ring $\Ext_A(\bM_2,\bM_2)$.
\item
More generally, $\Ext_B$ is the trigraded ring $\Ext_B(\bM_2, \bM_2)$
for any Hopf algebra $B$ over $\bM_2$.
\item
$A_{\cl}$ is the mod 2 classical Steenrod algebra,
and $A^{\cl}_*$ is its dual.
\item
$A(n)_{\cl}$ is the $\bM_2$-subalgebra of $A_{\cl}$ generated by
$\Sq^1$, $\Sq^2$, $\Sq^4$, \ldots, $\Sq^{2^n}$,
and $A(n)^{\cl}_*$ is its dual.
\item
$\Ext_{A_\cl}$ is the bigraded ring $\Ext_{A_{\cl}}(\F_2,\F_2)$.
\item
More generally, $\Ext_B$ is the trigraded ring $\Ext_B(\F_2, \F_2)$
for any Hopf algebra $B$ over $\F_2$.
\end{enumerate}

The following two  theorems of Voevodsky are the starting points
of our calculations.

\begin{thm}[\cite{V1}]
$\bM_2$ is the bigraded ring $\F_2[\tau]$, where
$\tau$ has bidegree $(0,1)$.
\end{thm}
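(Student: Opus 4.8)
The plan is to compute the bigraded group $\bM_2 = H^{*,*}(\Spec\C;\F_2)$ directly from the two standard structural inputs on the mod $2$ motivic cohomology of a field, and then to pin down the ring structure. Throughout I write $H^{p,q}$ for motivic cohomology in cohomological degree $p$ and weight $q$, so that the asserted generator $\tau$ lives in $H^{0,1}$.

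First I would invoke the basic vanishing statement: for any field $k$ one has $H^{p,q}(\Spec k;\F_2) = 0$ whenever $p > q$, and also whenever $q < 0$. This is part of the elementary machinery of motivic cohomology; for instance it is visible from Bloch's higher Chow group description, in which $H^{p,q}(\Spec k;\Z)$ is $\mathrm{CH}^q(\Spec k, 2q - p)$ and this vanishes outside the expected range, and the $\F_2$-coefficient version follows by the universal coefficient sequence. Second, in the remaining range $0 \le p \le q$ I would apply the Beilinson--Lichtenbaum isomorphism --- i.e.\ the mod $2$ norm residue theorem of Voevodsky --- which identifies $H^{p,q}(\Spec k;\F_2) \iso H^p_{\mathrm{et}}(\Spec k;\mu_2^{\otimes q})$ for $p \le q$.

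For $k = \C$ the étale cohomology is trivial to compute: the absolute Galois group of $\C$ is trivial, and $\mu_2^{\otimes q}$ is (non-canonically) the constant sheaf $\F_2$, so $H^p_{\mathrm{et}}(\Spec\C;\mu_2^{\otimes q})$ is $\F_2$ for $p = 0$ and $0$ for $p > 0$. Combining this with the vanishing range gives that $H^{p,q}(\Spec\C;\F_2)$ equals $\F_2$ when $p = 0$ and $q \ge 0$, and is $0$ otherwise; let $\tau$ denote the generator of $H^{0,1}$. It then remains only to check the ring structure. Since each $H^{0,q}$ with $q \ge 0$ is one-dimensional, the point is to see that $\tau^q \neq 0$; this follows because the étale realization is a ring homomorphism carrying the isomorphism above to the evident isomorphism $\mu_2^{\otimes 1} \otimes \cdots \otimes \mu_2^{\otimes 1} \iso \mu_2^{\otimes q}$ on global sections over $\Spec\C$. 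Hence the canonical map $\F_2[\tau] \rtarr \bM_2$ sending the polynomial generator to $\tau$ is an isomorphism of bigraded rings.

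The only genuinely deep ingredient is the Beilinson--Lichtenbaum/norm residue isomorphism, which I would quote as a black box from Voevodsky's work (reference \cite{V1}); once that is granted, the rest of the argument is bookkeeping with the étale cohomology of an algebraically closed field together with the known vanishing range for motivic cohomology of a field. So the expected obstacle is purely a matter of citing, rather than reproving, this major theorem.
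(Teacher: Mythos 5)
This statement is not proved in the paper at all: it is an imported theorem of Voevodsky, quoted with the citation \cite{V1} and used as a black box, so there is no internal proof to compare against. Your sketch is the standard way to deduce the computation and is essentially correct: the vanishing $H^{p,q}(\Spec k;\F_2)=0$ for $p>q$ or $q<0$, plus the Beilinson--Lichtenbaum isomorphism $H^{p,q}(\Spec k;\F_2)\iso H^p_{\mathrm{et}}(\Spec k;\mu_2^{\otimes q})$ for $p\le q$, plus the triviality of the Galois cohomology of $\C$, gives exactly $\F_2$ in bidegrees $(0,q)$ with $q\ge 0$ and zero elsewhere, and multiplicativity of the comparison map shows $\tau^q\neq 0$. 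Two small remarks. First, your case analysis silently omits $p<0\le q$; this is harmless because the same comparison isomorphism applies there and \'etale cohomology vanishes in negative degrees (one should not appeal to a vanishing of motivic cohomology in negative $p$ directly, since that is Beilinson--Soul\'e territory), but it is worth saying. Second, be aware that what you are giving is a reduction to the norm residue theorem rather than a proof; for $\C$ specifically one can also reach the answer via Suslin's comparison results for algebraically closed fields, but either way the deep input is being cited, not established, which is exactly how the paper treats the statement.
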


Our main object of study will be a localization of $\Ext_A$.
It will be more convenient for us to work with the dual $A_{*,*} = \Hom_{\bM_2}(A,\bM_2)$.

\begin{thm}\cite{V2} \cite[Theorem 12.6]{V3} 
The dual motivic Steenrod algebra $A_{*,*}$ is generated as an $\bM_2$-algebra by 
$\xi_i$ and $\tau_i$,
of degrees $(2(2^i-1),2^i-1)$ and $(2^{i+1}-1,2^i-1)$ respectively,
subject to the
relations
\[ \tau_i^2 = \tau \xi_{i+1}.\]
The coproduct is given on the generators by the following formulae, in which $\xi_0 = 1$:
\[ \Delta(\tau_k) = \tau_k\otimes 1 + \sum_i \xi_{k-i}^{2^i}\otimes \tau_i\]
\[ \Delta(\xi_k) = \sum_i  \xi_{k-i}^{2^i} \otimes \xi_i.\]
\end{thm}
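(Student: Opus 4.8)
The plan is to compute $A_{*,*} = \pi_{*,*}(H \smsh H)$ by a motivic refinement of Milnor's classical dualization argument, taking as input the existence of the motivic reduced powers $\Sq^i$ together with their basic formal properties: the Cartan formula (so that the total square $\Sq = \sum_i \Sq^i$ is a ring endomorphism of motivic cohomology), instability, and the identification of $\Sq^1$ with the Bockstein $\beta$. Granting these, the entire structure theorem --- the generators, the relation $\tau_i^2 = \tau\xi_{i+1}$, and the coproduct --- should be extracted from the action of $A$ on the motivic cohomology of $B\Z/2$.

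The first step is to record $H^{*,*}(B\Z/2;\F_2)$ over $\C$. By the standard computation of the motivic cohomology of $B\Z/2$, and using the vanishing $\rho = [-1] = 0$ because $\C$ is algebraically closed, one gets
\[
H^{*,*}(B\Z/2;\F_2) \iso \bM_2[u,v]/(u^2 - \tau v),
\]
where $u$ has bidegree $(1,1)$, $v = \beta u$ has bidegree $(2,1)$, and $\tau \in \bM_2$ has bidegree $(0,1)$. The total square is then forced by instability and the Cartan formula to be $\Sq(u) = u + v$ and $\Sq(v) = v + v^2$. One defines $\tau_i, \xi_i \in A_{*,*}$ as the coefficients appearing in the comodule coaction $\psi$ on the fundamental classes, the $\tau_i$ arising from $\psi(u)$ and the $\xi_i$ from $\psi(v)$, exactly paralleling Milnor's extraction of the classical generators from $t \mapsto \sum_i \xi_i \otimes t^{2^i}$. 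The stated bidegrees of $\tau_i$ and $\xi_i$ drop out of matching bidegrees term by term.

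Next I would read off the relations and the coproduct. Because $\psi$ is a map of comodule algebras, applying it to the defining relation $u^2 = \tau v$ and comparing the coefficients that carry the $\tau_i$ against those carrying the $\xi_i$ forces the single family $\tau_i^2 = \tau\xi_{i+1}$ (in characteristic $2$ the square of the $u$-coaction involves only $\tau_i^2$, while the $v$-coaction supplies $\xi_{i+1}$). The coproduct formulas come from coassociativity: applying the coaction twice and using associativity of the $A$-action on $H^{*,*}(B\Z/2)$ yields the Milnor-type formulas $\Delta(\xi_k) = \sum_i \xi_{k-i}^{2^i} \otimes \xi_i$ and $\Delta(\tau_k) = \tau_k \otimes 1 + \sum_i \xi_{k-i}^{2^i}\otimes\tau_i$, the Frobenius twists $\xi_{k-i}^{2^i}$ appearing for the same combinatorial reason as classically.

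The main obstacle is \emph{completeness}: proving that the $\xi_i$ and $\tau_i$ generate all of $A_{*,*}$ and that $\tau_i^2 = \tau\xi_{i+1}$ are the only relations, equivalently that the proposed polynomial-exterior algebra has exactly the right rank in every tridegree. This step genuinely requires input beyond formal dualization, resting on the computation of the motivic cohomology of symmetric products / the mod $2$ motivic Eilenberg--MacLane spectrum that shows the $\Sq^i$ exhaust the bistable operations. I would convert that input into the rank count by first establishing that $A_{*,*}$ is free as an $\bM_2 = \F_2[\tau]$-module and then comparing against the classical dual Steenrod algebra through the two standard specializations: inverting $\tau$ (where the relation makes each $\xi_{i+1}$ redundant and Betti realization recovers $A^{\cl}_*\otimes \F_2[\tau^{\pm 1}]$, polynomial on the $\tau_i$) and reducing mod $\tau$ (where the $\tau_i$ become exterior and the $\xi_i$ polynomial). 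Matching both specializations against freeness over $\bM_2$ pins down the tridegreewise ranks and completes the theorem.
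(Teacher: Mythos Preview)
The paper does not prove this theorem; it is quoted as background from Voevodsky's work \cite{V2} and \cite[Theorem 12.6]{V3}, with no argument given. So there is no ``paper's own proof'' to compare against.

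That said, your outline is a faithful sketch of how the cited proofs actually go. The Milnor-style extraction of $\tau_i$ and $\xi_i$ from the coaction on $H^{*,*}(B\Z/2)$, the derivation of $\tau_i^2 = \tau\xi_{i+1}$ from $u^2 = \tau v$, and the coproduct from coassociativity are all correct and standard. You are also right to flag completeness as the genuine content: this is precisely the hard theorem of \cite{V2}, which computes the motivic cohomology of motivic Eilenberg--MacLane spaces and thereby shows that the admissible monomials in $\Sq^i$ and $\beta$ form an $\bM_2$-basis of $A$. Your specialization argument (invert $\tau$ versus set $\tau=0$) is a clean way to organize the rank check once freeness over $\bM_2$ is known, though in Voevodsky's papers the basis is established more directly from the geometric computation rather than by this two-sided comparison.
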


\begin{rmk}
The quotient $A_{*,*}/\tau=A_{*,*}\otimes_{\bM_2}\F_2$ is analogous to the 
odd-primary classical dual Steenrod algebra, in the sense that there is an
infinite family of exterior generators $\tau_i$ and an infinite family
of polynomial generators $\xi_i$.  On the other hand,
the localization $A_{*,*}[\tau^{-1}]$ is analogous to the
mod 2 classical dual Steenrod algebra, which has only polynomial generators
$\tau_i$.
\end{rmk}

\subsection{$\Ext$ groups}

We are interested in computing a localization of
$\Ext_A$.
Before localization, this is a trigraded object.  In \cite{Istems}, classes in $\Ext_A$ are described in degrees of the
form $(s,f,w)$, where:
\begin{enumerate}
\item
$f$ is the Adams filtration, i.e., the homological degree.
\item
$s+f$ is the internal degree, corresponding to the
first coordinate in the bidegrees of $A$.
\item
$s$ is the stem, i.e., the internal degree minus
the Adams filtration.
\item
$w$ is the motivic weight.
\end{enumerate}

In the cobar complex,
$\xi_1$ represents an element $h_1$ of $\Ext_A$
in degree $(1,1,1)$.  Because we will invert $h_1$,
it is convenient to choose a new grading that is more $h_1$-invariant.
Except where otherwise noted, we will use the grading
$(t, f, c)$, where:
\begin{enumerate}
\item
$t = s - w$ is the Milnor-Witt stem.
\item
$f$ is the Adams filtration.
\item
$c = s +f - 2w$ is the Chow degree \cite{Istems}, which turns out to be a 
convenient grading for calculational purposes.
\end{enumerate}

The terminology ``Milnor-Witt stem" arises from the work of Morel
\cite{Morel}, which describes the motivic stable homotopy groups
$\pi_{s,w}$ with $s-w=0$ in terms of Milnor-Witt $K$-theory.

The terminology ``Chow degree" arises from the fact that the 
grading $s+f-2w$ is a natural index from the higher Chow group perspective
on motivic cohomology \cite{Bloch}.

\section{The $h_1$-local cohomology of $A$}
\label{sec:CohomA}

The goal of this section is to compute $\Ext_A[h_1^{-1}]$ explicitly.
We will accomplish this by
expressing $A_{*,*}$ as a series of extensions of smaller Hopf algebras.

\begin{defn}
\label{defn:Bi}
For each $i\geq -1$, let $B_i$ be the subalgebra of $A_{*,*}$
generated by the elements $\xi_k$ and also by $\tau_0,\dots,\tau_i$. 
\end{defn}

In particular, $B_{-1}$ is a polynomial $\bM_2$-algebra on the elements $\xi_k$.

\begin{lemma}
\label{lem:B-1}
$\Ext_{B_{-1}}[h_1^{-1}]$ is isomorphic to
$\bM_2[h_1^{\pm 1}]$.
\end{lemma}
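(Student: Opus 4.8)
The plan is to compute $\Ext_{B_{-1}}$ directly before localizing, using the fact that $B_{-1}$ is a polynomial $\bM_2$-algebra. Since $\bM_2 = \F_2[\tau]$ and $B_{-1} = \F_2[\tau][\xi_1, \xi_2, \dots]$, I first want to identify the Hopf algebra structure. From Voevodsky's coproduct formulae, $\Delta(\xi_1) = \xi_1 \otimes 1 + 1 \otimes \xi_1$, so $\xi_1$ is primitive, while $\Delta(\xi_k) = \sum_i \xi_{k-i}^{2^i} \otimes \xi_i$ for $k \geq 2$. The key structural observation is that $B_{-1}$, as a Hopf algebroid (or Hopf algebra over $\bM_2$), is essentially the even part of the classical dual Steenrod algebra base-changed to $\bM_2$: setting $\bar\xi_i = \xi_i$, it looks like $\F_2[\tau] \otimes \F_2[\xi_1, \xi_2, \dots]$ with the classical-style coproduct. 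I would therefore aim to show $\Ext_{B_{-1}} \cong \bM_2 \otimes_{\F_2} \Ext_{\F_2[\xi_1,\xi_2,\dots]}$, since $\tau$ is a primitive (indeed grouplike-difference) scalar that splits off.

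Next I would compute $\Ext$ over the sub-Hopf-algebra $P = \F_2[\xi_1, \xi_2, \dots]$ with the coproduct above. This is precisely the classical computation: $P$ is dual to a polynomial part of the Steenrod algebra, and its $\Ext$ is well understood. The cleanest route is to filter $P$ by powers of the augmentation ideal, or better, to use the change-of-rings / May-type spectral sequence coming from the filtration where the associated graded is primitively generated exterior-on-generators-$\xi_i^{2^j}$ — giving an $E_1$-page that is a polynomial algebra on classes $h_{i,j}$ dual to $\xi_i^{2^j}$. The class $h_1 = h_{1,0}$ corresponds to $\xi_1$. After inverting $h_1$, I expect massive collapse: the relevant fact is that in the cobar complex for $P[h_1^{-1}]$, everything becomes a coboundary except the powers of $h_1$ themselves. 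Concretely, one shows $[\xi_1 | \xi_1^{2^j}] + [\xi_1^{2^j} | \xi_1] \equiv$ something, and more usefully that each $\xi_k$ and each higher $h_{i,j}$ is killed: the standard argument is that the cobar differential of $[\xi_{k}]$ or of appropriate length-one or length-two cochains involves $h_1$ times a generator, so after inverting $h_1$ those generators die.

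A concrete and rigorous way to organize the last step: use the fact (standard, e.g. from the structure of $\Ext$ over $P$ or from Adams periodicity / the algebraic Atiyah-Hirzebruch approach) that $h_1$-inverted $\Ext$ over the dual of $\F_2[\xi_1]$-alone already gives $\F_2[h_1^{\pm 1}]$, and then show the remaining generators $\xi_2, \xi_3, \dots$ contribute nothing after $h_1$-inversion. For the latter, I would use the extension of Hopf algebras $\F_2[\xi_1] \to P \to P/\!/\F_2[\xi_1]$ — wait, $\xi_1$ is not normal — so instead I would filter $P$ so that the associated graded splits as a tensor product of $\F_2[\xi_1]$ (primitively) with the rest, apply the Cartan–Eilenberg spectral sequence, invert $h_1$ (which is exact), and observe that the $h_1$-inverted $\Ext$ of the "rest" is trivial because each generator $\xi_k^{2^j}$ with $(k,j) \neq (1,0)$ supports or receives a differential hitting an $h_1$-multiple. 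The upshot is $\Ext_{B_{-1}}[h_1^{-1}] \cong \bM_2[h_1^{\pm 1}]$, and then I must note that inverting $h_1$ also inverts $\tau$ up to the relation structure — actually here the claim is simply $\bM_2[h_1^{\pm 1}] = \F_2[\tau][h_1^{\pm 1}]$, and the lemma as stated asserts exactly this, so no further collapse of $\tau$ is needed.

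The main obstacle I anticipate is making the vanishing after $h_1$-inversion fully rigorous rather than heuristic: showing that every polynomial generator of $\Ext_P$ other than $h_1$ becomes zero (or gets identified with a decomposable that is itself killed) in the $h_1$-localized ring. The honest argument likely runs through an explicit contracting homotopy on the $h_1$-localized cobar complex — something like the classical "$h_1$-Bockstein" or the observation that $P$ is a free module over $\F_2[\xi_1]$ and the relative cobar complex is acyclic after localization. I would look for the simplest such homotopy: define $s$ on cobar cochains by "prepend or remove a $\xi_1$" and check $ds + sd = \id$ up to $h_1$-multiplication on the quotient complex, which forces $\Ext$ of the quotient to vanish after inverting $h_1$. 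That homotopy check, while routine in spirit, is the technical heart and the step most likely to require care with signs (trivial mod $2$) and with the precise indexing of the filtration.
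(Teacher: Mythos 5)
Your first step --- identifying $B_{-1}$ with the classical dual Steenrod algebra base-changed to $\bM_2$ --- is exactly the paper's first step, though your phrase ``the even part of the classical dual Steenrod algebra'' undersells it: at $p=2$ the classical dual Steenrod algebra is \emph{all} of $\F_2[\xi_1,\xi_2,\dots]$ with precisely the coproduct $\Delta(\xi_k)=\sum_i \xi_{k-i}^{2^i}\otimes\xi_i$, so $B_{-1}\iso A^{\cl}_*\otimes_{\F_2}\bM_2$ as Hopf algebras (up to the regrading sending classical degree $d$ to motivic bidegree $(2d,d)$, under which the class of $[\xi_1]$ becomes $h_1$ rather than $h_0$). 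Splitting off $\tau$ is fine, and you correctly note that $\tau$ is not inverted or killed at this stage.

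The genuine gap is the vanishing statement: that after inverting the class of $[\xi_1]$, everything in $\Ext_{\F_2[\xi_1,\xi_2,\dots]}$ dies except the tower on that class. You flag this yourself as ``the technical heart'' and do not carry it out, and the mechanisms you sketch will not deliver it. At the May $E_1$-level it is false that every generator $h_{ij}$ with $(i,j)\neq(1,0)$ ``supports or receives a differential hitting an $h_1$-multiple'': for instance $h_{11}$ is a $d_1$-cycle and is not a $d_1$-boundary; it dies only because $h_{10}h_{11}=0$ in $E_2$, and classes like $h_2$, $c_0$, $Ph_1$, \dots{} each die for their own reasons ($h_0^3h_2=0$, $h_0c_0=0$, etc.). Packaging all of these at once is exactly the content of Adams's vanishing line of slope $1$, which is a real theorem with a nontrivial proof, not something that falls out of $E_1$-bookkeeping. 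Likewise the proposed contracting homotopy ``prepend or remove a $\xi_1$'' is not well defined on cobar monomials (entries such as $\xi_1\xi_2^3$ admit no canonical $\xi_1$ to strip) and the identity $ds+sd=\id$ up to $h_1$-multiplication has no reason to hold. The paper closes this gap in one line: having identified $\Ext_{B_{-1}}$ with $\Ext_{A_{\cl}}\otimes\bM_2$ and $h_1$ with the classical $h_0$, it simply cites Adams's finiteness theorem to conclude $\Ext_{A_{\cl}}[h_0^{-1}]\iso\F_2[h_0^{\pm1}]$. Your proof becomes correct (and essentially identical to the paper's) if you replace your entire third step by that citation; as written, the key assertion is unproved.
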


\begin{proof}
We have an isomorphism 
$B_{-1} \rtarr A_*^\cl\otimes_{\F_2}\bM_2$
of Hopf algebras.
Under this mapping, the element $h_1$ in $\Ext_{B_{-1}}$
corresponds to $h_0$ in $\Ext_{A_{\cl}}$.
Adams's vanishing line of slope 1 \cite{A} implies that
$\Ext_{A_{\cl}}[h_0^{-1}]$ is isomorphic to $\F_2[h_0^{\pm 1}]$.
%
\end{proof}

We proceed to compute $\Ext_{B_n}[h_1^{-1}]$ inductively via a Cartan-Eilenberg spectral sequence 
\cite[\S XVI.6]{CE}, \cite[A1.3.14]{R}
for the extension of Hopf algebras
\[ B_{n-1}\rtarr B_n \rtarr E(\tau_n),\]
where $E(\tau_n)$ is an exterior algebra on the generator $\tau_n$.
The extension is cocentral, so the spectral sequence takes the form
\[ E_2 \iso \Ext_{B_{n-1}}\otimes \Ext_{E(\tau_n)} \iso \Ext_{B_{n-1}}[v_n] \Rightarrow \Ext_{B_n}.\] 
The $E_2$-page of this spectral sequence has four gradings: 
three from $\Ext_{B_n}$ and one additional Cartan-Eilenberg grading associated
with the filtration involved in construction of the spectral sequence.
However, we will suppress the Cartan-Eilenberg grading because we won't need
it for bookkeeping purposes.

The class $v_n$ has degree $(2^n-1,1,1)$ in the $E_2$-term.
The differentials take the form
\[ d_r:E_r^{(t,f,c)} \rtarr E_r^{(t-1,f+1,c)}.\]

\begin{prop}\label{MainExtProp} 
\mbox{}
\begin{enumerate}
\item $\Ext_{B_0}[h_1^{-1}] \iso \bM_2[h_1^\pm,v_0]$.
\item $\Ext_{B_1}[h_1^{-1}] \iso \F_2[h_1^\pm,v_1^4]$.
\item
$\Ext_{B_n}[h_1^{-1}] \iso \F_2[h_1^\pm,v_1^4,v_2,\dots,v_n]$
for all $n \geq 2$.
\end{enumerate}
In each case, $v_i$ has degree $(2^i-1,1,1)$.
\end{prop}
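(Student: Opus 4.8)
The plan is to prove all three statements together by induction on $n$, using the Cartan-Eilenberg spectral sequence for the extension $B_{n-1} \rtarr B_n \rtarr E(\tau_n)$ together with Lemma~\ref{lem:B-1} as the base case. Since $h_1$ is represented by $\xi_1 \in B_{-1} \subseteq B_{n-1}$, it lives in filtration zero of the Cartan-Eilenberg filtration, so localizing at $h_1$ is exact and commutes with passing to the $E_\infty$-page; thus it suffices to run the localized spectral sequence $E_2 \iso \Ext_{B_{n-1}}[h_1^{-1}][v_n] \Rightarrow \Ext_{B_n}[h_1^{-1}]$ and compute all differentials on $v_n$.

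First I would handle $n=0$: here $\Ext_{B_{-1}}[h_1^{-1}] \iso \bM_2[h_1^{\pm 1}]$ has trivial differentials for degree reasons (everything is concentrated in filtration~$0$ of the original Adams grading apart from $h_1$), $v_0$ has degree $(0,1,1)$ and must be a permanent cycle since it is the lowest-filtration class in its stem, and there is no room for it to support or receive a differential — this gives $\Ext_{B_0}[h_1^{-1}] \iso \bM_2[h_1^{\pm 1}, v_0]$. For $n=1$: the spectral sequence has $E_2 \iso \bM_2[h_1^{\pm 1}, v_0][v_1]$. The key computation is a differential $d(v_1) \doteq h_1 v_0^2$ (up to a unit), mirroring the classical fact that $\xi_1 = \tau_1^2/\tau$ forces a Bockstein-type relation; after this differential the surviving ring is $\F_2[h_1^{\pm 1}, v_1^4]$ — note $\tau$, hence $v_0$, is killed, which is why the coefficient ring drops from $\bM_2$ to $\F_2$, and $v_1^2$ is also killed (it equals $h_1 v_0^{?}$ after reindexing the relation $\tau_1^2 = \tau\xi_2$), leaving $v_1^4$ as the first polynomial generator. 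I would extract this differential from the relation $\tau_n^2 = \tau \xi_{n+1}$ in the cobar complex, which for $n=1$ directly expresses $v_1^2$ in terms of lower data.

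For the inductive step $n \geq 2$, assuming $\Ext_{B_{n-1}}[h_1^{-1}] \iso \F_2[h_1^{\pm 1}, v_1^4, v_2, \dots, v_{n-1}]$, the localized $E_2$-page is $\F_2[h_1^{\pm 1}, v_1^4, v_2, \dots, v_{n-1}][v_n]$. The crucial claim is that $v_n$ is now a \emph{permanent cycle}: its degree $(2^n-1,1,1)$ means a differential would have to hit something in degree $(2^n-2,2,1)$, and one checks that every element of the $E_2$-page in that degree is either zero or not in the image (the only candidates built from $h_1$, $v_1^4$, and $v_2,\dots,v_{n-1}$ fail the degree bookkeeping — in particular there is nothing of Chow degree $1$ and filtration $2$ in that Milnor-Witt stem because the generators $v_i$ for $2 \le i \le n-1$ sit in stems $2^i - 1 < 2^n - 2$ but their products overshoot or have the wrong Chow degree). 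So all differentials vanish, the spectral sequence collapses, and there is no multiplicative extension problem since $v_n$ generates a polynomial factor freely; this yields $\Ext_{B_n}[h_1^{-1}] \iso \F_2[h_1^{\pm 1}, v_1^4, v_2, \dots, v_n]$.

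The main obstacle is the degree-counting argument that $v_n$ is a permanent cycle for $n \geq 2$ (and dually, pinning down exactly the differential $d(v_1)$ and its consequences $v_1^2 = 0$, $v_0 = 0$ in the $n=1$ step): one must verify that the target degree $(2^n - 2, 2, 1)$ of the putative $d_r(v_n)$ contains no nonzero permanent cycle in the inductively known ring, which requires carefully tabulating which monomials in $h_1^{\pm 1}, v_1^4, v_2, \dots, v_{n-1}$ can land in that single trigraded spot. I expect the Chow-degree grading $c$ to do most of the work here — since every $v_i$ has $c=1$ and $h_1$ has $c=1$, while $v_1^4$ has $c=4$, keeping track of $c$ sharply constrains the possibilities — but making this rigorous across all $n$ simultaneously is the delicate part of the argument.
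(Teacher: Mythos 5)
Your overall strategy --- induction on $n$ via the Cartan--Eilenberg spectral sequence for $B_{n-1} \rtarr B_n \rtarr E(\tau_n)$, with the collapse for $n \geq 2$ forced by degree considerations on possible targets of differentials on $v_n$ --- is exactly the paper's approach, and your handling of the $n=0$ case and the inductive step $n \geq 2$ is essentially correct (for $n\geq 2$ the cleanest way to finish your degree count is a parity observation: any monomial in Adams filtration $2$ and Chow degree $1$ must be $h_1 v_i$, which sits in the odd Milnor--Witt stem $2^i-1$, whereas the target of a differential on $v_n$ sits in the even stem $2^n-2$).

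The genuine gap is in the $n=1$ case. First, your proposed differential $d(v_1) \doteq h_1 v_0^2$ is impossible on degree grounds: $v_1$ lies in $(t,f,c)=(1,1,1)$, so any differential on it lands in $(0,2,1)$, whereas $h_1 v_0^2$ lies in $(0,3,2)$. The correct first differential is $d_2(v_1) = h_1 v_0$, coming from the coproduct term $\xi_1 \otimes \tau_0$ in $\Delta(\tau_1)$. Second, and more seriously, that single differential only gets you to $E_3 \iso \bM_2[h_1^{\pm},v_1^2]$: it kills $v_0$ (since $h_1$ is invertible) and the odd powers of $v_1$, but it does \emph{not} touch $\tau$ or $v_1^2$. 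The heart of the computation is a further differential $d_3(v_1^2) = \tau h_1^3$, which is what simultaneously kills $\tau$ (dropping the coefficients from $\bM_2$ to $\F_2$) and reduces the polynomial generator from $v_1^2$ to $v_1^4$. This differential does not follow formally from the relation $\tau_1^2 = \tau\xi_2$ as you suggest (that relation would point toward $\xi_2$, i.e.\ $h_2$-type classes, not $\tau h_1^3$); in the paper it is verified by an explicit cobar-complex calculation exhibiting a chain whose boundary is $\tau\,\xi_1|\xi_1|\xi_1|\xi_1$. Without identifying and justifying $d_3(v_1^2) = \tau h_1^3$, your argument would terminate at the incorrect answer $\bM_2[h_1^{\pm},v_1^2]$, and the error would propagate through every subsequent stage of the induction.
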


\begin{pf}
When $n=0$, Lemma \ref{lem:B-1} says that
the Cartan-Eilenberg spectral sequence takes the form
\[ E_2 \iso \bM_2[h_1^{\pm 1}][v_0] \Rightarrow \Ext_{B_0}[h_1^{-1}].\]
Since $\tau_0$ is primitive, the spectral sequence collapses at $E_2$. 
We conclude that $\Ext_{B_0}[h_1^{-1}]\iso \bM_2[h_1^{\pm 1},v_0]$ with $v_0$ in degree $(0,1,1)$.

Taking now $n=1$, 
the computation of $\Ext_{B_0}[h_1^{-1}]$ in the previous paragraph tells
us that 
the spectral sequence takes the form
\[ E_2 \iso \bM_2[h_1^{\pm 1},v_0,v_1] \Rightarrow \Ext_{B_1}[h_1^{-1}],\]
with $v_0$ in degree $(0,1,1)$ and $v_1$ in degree $(1,1,1)$.
The coproduct formula 
\[\Delta(\tau_1) = \tau_1\otimes 1 + \xi_1\otimes \tau_0 + 1\otimes \tau_1\]
 gives rise to the differential $d_2(v_1) = h_1v_0$. 
It follows that $E_3 \iso \bM_2[h_1^\pm, v_1^2]$. There is next a differential $d_3(v_1^2) = \tau h_1^3$, which can be verified by the cobar complex calculation
\[
d( \tau_1|\tau_1 | \xi_1 + \xi_1 | \tau_0\tau_1 | \xi_1 + \tau_1\xi_1 | \tau_0 | \xi_1 + \xi_1^2 | \tau_0 | (\tau_1+\tau_0\xi_1)) = \tau \xi_1 | \xi_1 | \xi_1 | \xi_1.
\]
The class $v_1^4$ in degree $(4,4,4)$ cannot support any higher differentials for degree reasons, and we have
\[ \Ext_{B_1}[h_1^{-1}] \iso E_\infty=E_4  \iso \F_2[h_1^\pm,v_1^4].\]

For $n\geq 2$, 
the argument is by induction, using a Cartan-Eilenberg spectral sequence at every turn. Each of these spectral sequences collapses at $E_2$ since 
there are no possible values for differentials on $v_n$.
\end{pf}

\begin{thm}\label{ExtAThm}
\[ \Ext_A[h_1^{-1}] \iso \F_2[h_1^\pm,v_1^4,v_2,v_3,\dots],
\]
where $h_1$ has degree $(0,1,0)$;
$v_1^4$ has degree $(4,4,4)$; and
$v_n$ has degree $(2^n-1,1,1)$ for $n \geq 2$.
\end{thm}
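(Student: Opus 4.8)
The plan is to realize $A_{*,*}$ as a filtered colimit of the sub-Hopf-algebras $B_n$ and to transport the calculation of Proposition~\ref{MainExtProp} through that colimit. The first point is that $A_{*,*}=\colim_n B_n$: the $B_n$ form an increasing chain of $\bM_2$-sub-Hopf-algebras of $A_{*,*}$ --- closure of $B_n$ under the coproduct is immediate from the formulas for $\Delta(\xi_k)$ and $\Delta(\tau_k)$ together with the fact that $\tau_j\in B_j$ --- and their union contains every algebra generator $\xi_k$, $\tau_k$ of $A_{*,*}$.

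Next I would check that $\Ext$, and then $\Ext[h_1^{-1}]$, commutes with this colimit. The reduced cobar complex $C^\bullet(C)=\bigl(\bar C^{\otimes\bullet},d\bigr)$ is built from iterated tensor powers over $\bM_2$, and tensor product over $\bM_2$ commutes with filtered colimits, so the inclusions $B_{n-1}\into B_n$ induce an isomorphism $\colim_n C^\bullet(B_n)\xrightarrow{\ \sim\ }C^\bullet(A_{*,*})$ of cochain complexes. Filtered colimits of $\bM_2$-modules are exact, hence commute with passage to cohomology, so $\colim_n\Ext_{B_n}\xrightarrow{\ \sim\ }\Ext_A$; since $h_1$-localization is itself a filtered colimit it commutes with the isomorphism just obtained, and so
\[
\Ext_A[h_1^{-1}]\iso\colim_n\Ext_{B_n}[h_1^{-1}].
\]
Concretely, this is just the statement that $\Ext_{B_n}[h_1^{-1}]\rtarr\Ext_A[h_1^{-1}]$ is an isomorphism in each fixed tridegree once $n$ is large: only finitely many of the $\tau_i$ can occur in the cobar complex in a bounded range of Milnor--Witt stems, and $h_1$-localization does not change the Milnor--Witt stem.

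It remains to compute the colimit, and here Proposition~\ref{MainExtProp} does the work: $\Ext_{B_n}[h_1^{-1}]=\F_2[h_1^{\pm 1},v_1^4,v_2,\dots,v_n]$ for $n\ge 2$, and the transition map $\Ext_{B_{n-1}}[h_1^{-1}]\rtarr\Ext_{B_n}[h_1^{-1}]$ is the inclusion of the evident polynomial subalgebra. Indeed it sends $h_1$ to $h_1$, since $h_1$ is represented by the cobar cocycle $\xi_1\in B_{n-1}$, and it sends $v_i$ to $v_i$ for $i<n$, since in the inductive argument proving Proposition~\ref{MainExtProp} the generator $v_i$ of $\Ext_{B_n}[h_1^{-1}]$ is by construction the image of the generator $v_i$ of $\Ext_{B_{n-1}}[h_1^{-1}]$ along exactly this map. (As a check on the bookkeeping, $v_1^4$ and $v_i$ are the unique classes of $\F_2[h_1^{\pm 1},v_1^4,v_2,\dots,v_n]$ in tridegrees $(4,4,4)$ and $(2^i-1,1,1)$.) Passing to the colimit of
\[
\F_2[h_1^{\pm 1},v_1^4]\into\F_2[h_1^{\pm 1},v_1^4,v_2]\into\F_2[h_1^{\pm 1},v_1^4,v_2,v_3]\into\cdots
\]
then yields $\Ext_A[h_1^{-1}]\iso\F_2[h_1^{\pm 1},v_1^4,v_2,v_3,\dots]$, with the generator degrees read off from Proposition~\ref{MainExtProp}.

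The formal ingredients --- the colimit presentation of $A_{*,*}$, exactness of filtered colimits, and their commutation with $h_1$-localization --- are routine. The one step I expect to require genuine care is the identification of the transition maps above: one must verify that each generator $v_i$ is carried to the generator of the same name at every later stage (rather than to $0$ or to a decomposable), so that the polynomial generators genuinely accumulate and no spurious relation is introduced in the colimit. Carrying that bookkeeping along with the inductive Cartan--Eilenberg argument of Proposition~\ref{MainExtProp} is where I would focus the effort.
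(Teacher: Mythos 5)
Your proposal is correct and follows exactly the route the paper takes: the paper's proof is precisely ``Since $A$ is $\colim B_n$, $\Ext_A$ equals $\colim \Ext_{B_n}$,'' followed by an appeal to part (3) of Proposition~\ref{MainExtProp}. You have simply supplied the standard justifications (cobar complex commuting with filtered colimits, identification of the transition maps via the Cartan--Eilenberg edge homomorphisms) that the paper leaves implicit.
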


\begin{proof}
Since $A$ is $\colim B_n$,
$\Ext_A$ equals $\colim \Ext_{B_n}$.
The calculation follows from part (3) of Proposition \ref{MainExtProp}.
\end{proof}

\begin{rmk}
Theorem \ref{ExtAThm} implies that 
the part of $\Ext_A [h_1^{-1}]$ in Chow degree zero is equal to
$\F_2[h_1^{\pm 1}]$.
Another more direct argument for this observation uses the
isomorphism between $\Ext_{A_\cl}$ and the Chow degree zero part of
$\Ext_A$ \cite{Istems}.  This isomorphism takes classical elements of degree 
$(s,f)$ to motivic elements of degree $(s,f,0)$. The classical calculation 
$\Ext_{A_\cl} [h_0^{-1} ] =\F_2 [h_0^{\pm 1}]$ corresponds to the Chow degree
zero part of $\Ext_A [h_1^{-1}]$.
\end{rmk}

\subsection{The $h_1$-local cohomology of $A(2)$}\label{sec:CohomA(2)}
For completeness, we will also explicitly calculate
$\Ext_{A(2)}[h_1^{-1}]$, where $A(2)$ is the $\bM_2$-subalgebra 
of $A$ generated by $\Sq^1$, $\Sq^2$, and $\Sq^4$. 

Dual to the inclusion $A(2)\into A$ is a quotient map
\[ A_{*,*} \rtarr A(2)_{*,*} \iso \bM_2[\xi_1,\xi_2,\tau_0,\tau_1,\tau_2]/(\xi_1^4,\xi_2^2,\tau_0^2=\tau\xi_1,\tau_1^2=\tau\xi_2,\tau_2^2)\]
We filter $A(2)_{*,*}$ by sub-Hopf algebras
\[ B_{-1}(2)\subseteq B_0(2) \subseteq B_1(2)\subseteq A(2)_{*,*}\]
where $B_{-1}(2)$ is the subalgebra generated by $\xi_1$ and $\xi_2$;
$B_0(2)$ is generated by $\xi_1$, $\xi_2$, and $\tau_0$;
and $B_1(2)$ is generated by $\xi_1$, $\xi_2$, $\tau_0$, and $\tau_1$.
The notation is analogous to the notation in Definition \ref{defn:Bi}.

\begin{lemma}
\label{lem:B-1(2)}
$\Ext_{B_{-1}(2)}[h_1^{-1}]$ is isomorphic to $\bM_2[h_1^{\pm 1},a_1]$,
where $a_1$ has degree $(4,3,0)$.
\end{lemma}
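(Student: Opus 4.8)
The plan is to imitate the computation of $\Ext_{B_{-1}}[h_1^{-1}]$ from Lemma \ref{lem:B-1}, but now over the smaller Hopf algebra $B_{-1}(2) = \bM_2[\xi_1, \xi_2]/(\xi_1^4, \xi_2^2)$ rather than the full polynomial algebra on all the $\xi_k$. The key observation is that $B_{-1}(2)$ is (after extending scalars from $\F_2$ to $\bM_2$) exactly the dual of $A(2)_\cl / (\text{image of } \Sq^1)$ — more precisely, it is $\bM_2 \otimes_{\F_2} (A_\cl/\!/E(Q_0))^*$ for the relevant truncation. The cleanest route: identify $B_{-1}(2)$ with $\bM_2 \otimes_{\F_2} C$, where $C$ is the sub-Hopf algebra $\F_2[\bar\xi_1, \bar\xi_2]/(\bar\xi_1^4, \bar\xi_2^2)$ of the classical dual Steenrod algebra, so that $\Ext_{B_{-1}(2)} \iso \bM_2 \otimes_{\F_2} \Ext_C$. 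Then $\Ext_C$ is the $E_2$-page dual to $A(2)_\cl /\!/ A(0)_\cl$, or equivalently the cohomology of the quotient Hopf algebra $A_\cl(2)/(\Sq^1)$; its $h_0$- (here $h_1$-) localization is classically known.

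Concretely, I would first record the coproduct on $B_{-1}(2)$: since $\xi_k = 0$ for $k \geq 3$ in the quotient, the formulas $\Delta(\xi_1) = \xi_1 \otimes 1 + 1 \otimes \xi_1$ and $\Delta(\xi_2) = \xi_2 \otimes 1 + \xi_1^2 \otimes \xi_1 + 1 \otimes \xi_2$ hold, with $\xi_1^4 = 0$ and $\xi_2^2 = 0$. So $h_1$ is represented by $\xi_1$ in the cobar complex, and there is a class represented by $\xi_2$ (plus correction terms) in some homological degree. I would then compute $\Ext_{B_{-1}(2)}$ either directly via a small cobar/May computation, or by invoking the known classical answer: the cohomology of $A_\cl(2)/(\Sq^1)$ is a familiar object, and after inverting $h_0$ one gets $\F_2[h_0^{\pm 1}, \alpha]$ for a single polynomial generator $\alpha$ — the localized cohomology of this Hopf algebra is "one generator wide" because the associated graded in the May spectral sequence has exactly the right shape. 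Matching degrees: the class $a_1$ should come from $h_{20}$ (in May-SS notation, the class detected by $\xi_2$-type elements), and its internal degree works out: $\xi_2$ has degree $(6,3)$ in $(s+f, w)$-coordinates, i.e. stem $3$, filtration $3$, weight $3$... but after the relevant modification one lands in $(t,f,c) = (4,3,0)$. I would verify the claimed tridegree $(4,3,0)$ by carefully tracking what happens to $\xi_2^2 = 0$: in the full $A_{*,*}$ we have $\tau_1^2 = \tau\xi_2$, so a power of $\xi_2$ interacts with $\tau$; but in $B_{-1}(2)$ there are no $\tau_i$'s, so $\xi_2$ itself is the relevant generator and $\xi_1^2 \xi_2$-type cobar elements give the right class.

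The structure of the argument is therefore: (1) present $B_{-1}(2)$ with its coproduct; (2) establish the isomorphism $B_{-1}(2) \iso \bM_2 \otimes_{\F_2} C$ for the classical truncated Hopf algebra $C = \F_2[\bar\xi_1,\bar\xi_2]/(\bar\xi_1^4, \bar\xi_2^2)$, so that $\Ext_{B_{-1}(2)}[h_1^{-1}] \iso \bM_2[h_1^{\pm 1}] \otimes_{\F_2[h_0^{\pm 1}]} \Ext_C[h_0^{-1}]$; (3) identify $\Ext_C$ with the cohomology of $A_\cl(2)/(\Sq^1)$ and quote or re-derive that its $h_0$-localization is $\F_2[h_0^{\pm 1}, \alpha]$ on one generator $\alpha$ in the appropriate bidegree — this can be done via a May spectral sequence for $C$, where the only $h_0$-tower-adjacent generator surviving localization is the one detected by (a power involving) $h_{20}$; (4) transport the bidegree of $\alpha$ through the change of grading to the Milnor–Witt/Chow grading $(t,f,c)$ and check it equals $(4,3,0)$, renaming $\alpha$ as $a_1$. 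The main obstacle is step (3): one has to pin down precisely which classical polynomial generator survives $h_0$-localization and in what degree, and be sure there is exactly one (so the localized ring is genuinely $\bM_2[h_1^{\pm 1}, a_1]$ and not something larger). This is where a short May spectral sequence computation — analogous to, but simpler than, the one in Proposition \ref{MainExtProp} for $B_1$, since $B_{-1}(2)$ has no $\tau_i$'s and only two polynomial generators with nilpotence relations — will be needed to confirm both the single generator and its degree $(4,3,0)$.
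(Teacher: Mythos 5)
Your proposal follows essentially the same route as the paper: identify $B_{-1}(2)$ with the base change to $\bM_2$ of a classical truncated polynomial Hopf algebra $C=\F_2[\bar\xi_1,\bar\xi_2]/(\bar\xi_1^4,\bar\xi_2^2)$ and invert the class detected by $\xi_1$. One correction: $C$ is precisely the dual of the classical $A(1)$ (not of $A(2)_{\cl}/(\Sq^1)$ or $A_{\cl}/\!/E(Q_0)$, neither of which is the right object), so the paper simply quotes the standard computation $\Ext_{A(1)_{\cl}}[h_0^{-1}]\iso\F_2[h_0^{\pm 1},a^{\cl}]$ with $a^{\cl}$ in degree $(4,3)$ (Ravenel, Theorem 3.1.25) rather than re-running a May spectral sequence; also your intermediate degree bookkeeping for $\xi_2$ is garbled, though the final tridegree $(4,3,0)$ obtained from the regrading is correct.
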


\begin{proof}
We have an isomorphism 
$B_{-1}(2) \rtarr A(1)_*^\cl\otimes_{\F_2} \bM_2$.
Under this mapping, the element $h_1$ in $\Ext_{B_{-1}(2)}$
corresponds to
$h_0$ in $\Ext_{A(1)_{\cl}}$.
It is well-known that 
$\Ext_{A(1)_{\cl}}[h_0^{-1}]$ is isomorphic to $\F_2 [ h_0^{\pm 1}, a^{\cl} ]$, where $a^{\cl}$ has degree $(4,3)$
(for example, see \cite[Theorem 3.1.25]{R}).
\end{proof}

\begin{prop}\label{CohomBi(2)} 
\mbox{}
\begin{enumerate}
\item $\Ext_{B_0(2)}[h_1^{-1}]\iso \bM_2[h_1^{\pm},a_1,v_0]$.
\item $\Ext_{B_1(2)}[h_1^{-1}] \iso \F_2[h_1^\pm,a_1,v_1^4]$.
\item  $\Ext_{A(2)}[h_1^{-1}]  \iso \F_2[h_1^\pm,a_1,v_1^4,v_2]$.
\end{enumerate}
In each case, $a_1$ has degree $(4,3,0)$;
$v_0$ has degree $(0,1,1)$;
$v_1^4$ has degree $(4,4,4)$;
and $v_2$ has degree $(3,1,1)$.
\end{prop}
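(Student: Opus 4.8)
The plan is to mimic exactly the inductive strategy of Proposition \ref{MainExtProp}, running a Cartan-Eilenberg spectral sequence for each of the Hopf algebra extensions in the filtration $B_{-1}(2)\subseteq B_0(2)\subseteq B_1(2)\subseteq A(2)_{*,*}$, with Lemma \ref{lem:B-1(2)} as the base case. Each extension $B_{k-1}(2)\rtarr B_k(2)\rtarr E(\tau_k)$ is cocentral, so the spectral sequence has $E_2$-page $\Ext_{B_{k-1}(2)}[h_1^{-1}][v_k]$ converging to $\Ext_{B_k(2)}[h_1^{-1}]$, with $v_k$ in degree $(2^k-1,1,1)$ and differentials $d_r\colon E_r^{(t,f,c)}\rtarr E_r^{(t-1,f+1,c)}$. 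The difference from Proposition \ref{MainExtProp} is merely the presence of the extra polynomial generator $a_1$ in degree $(4,3,0)$, which is a permanent cycle in every stage (it comes from $\Ext_{B_{-1}(2)}$ and has $c=0$, so it is $h_1$-local-invariant and cannot support or receive a differential). So the same differential patterns should go through verbatim.

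For part (1): since $\tau_0$ is primitive, the extension $B_{-1}(2)\rtarr B_0(2)\rtarr E(\tau_0)$ gives a Cartan-Eilenberg spectral sequence $\bM_2[h_1^{\pm 1},a_1][v_0]\Rightarrow \Ext_{B_0(2)}[h_1^{-1}]$ that collapses at $E_2$, yielding $\bM_2[h_1^{\pm},a_1,v_0]$ with $v_0$ in degree $(0,1,1)$. For part (2): the extension $B_0(2)\rtarr B_1(2)\rtarr E(\tau_1)$ gives $\bM_2[h_1^{\pm 1},a_1,v_0,v_1]\Rightarrow \Ext_{B_1(2)}[h_1^{-1}]$. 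The coproduct formula $\Delta(\tau_1)=\tau_1\otimes 1+\xi_1\otimes\tau_0+1\otimes\tau_1$ is identical to the one used in Proposition \ref{MainExtProp}, so it again forces $d_2(v_1)=h_1 v_0$, leaving $E_3\iso\bM_2[h_1^{\pm},a_1,v_1^2]$; then the same cobar computation as in Proposition \ref{MainExtProp} gives $d_3(v_1^2)=\tau h_1^3$ (the cobar element exhibiting this differential lives in $B_1(2)$ just as it lives in $B_1$, since $\xi_1,\tau_0,\tau_1$ are all present). For degree reasons $v_1^4$ in degree $(4,4,4)$ supports no further differentials, so $\Ext_{B_1(2)}[h_1^{-1}]\iso\F_2[h_1^{\pm},a_1,v_1^4]$. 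For part (3): the final extension $B_1(2)\rtarr A(2)_{*,*}\rtarr E(\tau_2)$ gives a spectral sequence with $E_2$-page $\F_2[h_1^{\pm},a_1,v_1^4,v_2]$, $v_2$ in degree $(3,1,1)$; a degree count shows there is no available target for a differential on $v_2$ (any nonzero class in degree $(2,2,1)$ would have to be a product of the known generators, and no such monomial exists), so the spectral sequence collapses and the result follows.

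The one step that genuinely needs checking rather than just citing Proposition \ref{MainExtProp} is the $d_3(v_1^2)=\tau h_1^3$ differential in part (2): I should confirm that the explicit cobar cycle $\tau_1|\tau_1|\xi_1+\xi_1|\tau_0\tau_1|\xi_1+\tau_1\xi_1|\tau_0|\xi_1+\xi_1^2|\tau_0|(\tau_1+\tau_0\xi_1)$ makes sense in the cobar complex for $B_1(2)$ — but every tensor factor appearing is a monomial in $\xi_1,\xi_2,\tau_0,\tau_1$, so it does, and the computation of its differential is purely formal and reduces mod the relations $\xi_1^4=0$, $\xi_2^2=0$, $\tau_0^2=\tau\xi_1$, $\tau_1^2=\tau\xi_2$, which do not interfere. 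Hence the main ``obstacle'' is really bookkeeping: verifying at each collapse ($v_0$ in part 1, $v_2$ in part 3) that no monomial in the surviving generators sits in the appropriate target degree. Since the Chow degree $c$ is additive and the generators have $c$-degrees $0$ ($h_1$, $a_1$), $4$ ($v_1^4$), and $1$ ($v_2$), while a differential preserves $c$ and drops $t$ by one and raises $f$ by one, these checks are short and I would present them as one- or two-line degree arguments rather than spelling out tables.
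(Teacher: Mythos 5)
Your proposal is correct and follows exactly the route the paper intends: its proof of this proposition simply says to run the same series of Cartan--Eilenberg spectral sequences as in Proposition~\ref{MainExtProp}, starting from Lemma~\ref{lem:B-1(2)}, and you have carried that out, including the two points that genuinely need checking (that the cobar cycle witnessing $d_3(v_1^2)=\tau h_1^3$ survives to $B_1(2)$, and the Chow-degree count showing $a_1$ and $v_2$ support no differentials). No gaps.
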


\begin{proof}
The proof 
uses a series of Cartan-Eilenberg spectral sequences as in 
the proof of Proposition~\ref{MainExtProp}, given 
Lemma \ref{lem:B-1(2)} as the starting point.
\end{proof}

\begin{rmk} The classes $a_1$, $v_1^4$, and $h_1^2 v_2$ correspond respectively 
to the classes $u$, $P$, and $c$ in \cite[Theorem~4.13]{IMotA(2)}.
\end{rmk}

\begin{rmk}
Using the structure of $\Ext_{A(2)_{\cl}}[h_0^{-1}]$,
similar arguments show that
\[ 
\Ext_{A(3)}[h_1^{-1}] \iso \F_2[h_1^{\pm 1},g,b_{31},v_1^4,v_2,v_3],
\]
where $g$ has degree $(8,4,0)$ and 
$b_{31}$ has degree $(12,2,0)$.
Using the structure of $\Ext_{A(3)_{\cl}}[h_0^{-1}]$, 
\[ 
\Ext_{A(4)}[h_1^{-1}] \iso \F_2[h_1^{\pm 1},g^2,\Delta_1,b_{41},v_1^4,v_2,v_3,v_4], \]
where $g^2$ has degree $(16,8,0)$,
$\Delta_1$ has degree $(24,4,0)$, and
$b_{41}$ has degree $(28,2,0)$.
\end{rmk}

\section{The $h_1$-local motivic May spectral sequence}\label{sec:MaySS}

Although Theorem \ref{ExtAThm} gives a complete description of $\Ext_A[h_1^{-1}]$,
it unfortunately tells us very little about the localization map
$\Ext_A \rightarrow \Ext_A[h_1\inv]$.  The problem is that the proof 
of Theorem \ref{ExtAThm} is incompatible with the motivic May spectral 
sequence approach to $\Ext_A$, as carried out in \cite{Istems}.

A detailed understanding of the localization map allows for the transfer
of information from the well-understood $\Ext_A[h_1^{-1}]$ to the
much more complicated $\Ext_A$.
In this section we carry out the computation of the
$h_1$-localized motivic May spectral sequence.
This will allow us to obtain information about the localization map in 
Section \ref{sec:LocMap}.

We recall the details of the motivic May spectral sequence from \cite{DI}. This
spectral sequence has four gradings: three from $\Ext_A$ and one additional
May grading associated with the filtration involved in construction of the
spectral sequence.  We will grade this spectral sequence in the form
$(m-f, t, f, c)$, where $m$ is the May grading, $f$ is the Adams filtration,
$t = s-w$ is the Milnor-Witt stem, and $c = s+f-2w$ is the Chow degree.
 
The $E_1$-page is a polynomial algebra over $\bM_2$ on generators 
$h_{ij}$ for $i\geq 1$, $j\geq 0$, where
\begin{enumerate}
\item $h_{i0}$ has degree $(i-1, 2^{i-1}-1, 1, 1)$.
\item $h_{ij}$ has degree $(i-1, 2^{j-1}(2^i-1) - 1, 1, 0)$ for $j>0$.
\end{enumerate}
Note in particular that $h_{i0}$ has Chow degree $1$,
while $h_{ij}$ has Chow degree $0$ for $j > 0$.
In a sense, this wrinkle in the gradings is the primary source of 
``exotic" motivic phenomena that do not appear in the classical situation.

The $d_1$-differential is given by the classical formula
\[ d_1(h_{ij}) = \sum_{0<k<i} h_{kj}h_{i-k,j+k}.\]

\subsection{The $h_1$-local $E_1$-term}\label{sec:AMayE1}
Consider the $h_1$-localization $E_1[h_1^{-1}]$ of the May $E_1$-page.  
In order to simplify the calculation, we introduce the following notation.

\begin{defn}
In $E_1[h_1^{-1}]$, define
\begin{enumerate}
\item
$h_{n0}'$ to be $h_{n0} + h_1^{-1} h_{20}h_{n-1,1}$.
\item
$h_{n-1,2}'$ to be $h_{n-1,2}+h_1^{-1}\sum_{k=2}^{n-1} h_{k,1}h_{n-k,k}$.
\end{enumerate}
\end{defn}

We may replace the algebra generators  $h_{n0}$ and $h_{n-1,2}$
by $h_{n0}'$ and $h_{n-1,2}'$  to obtain another set of algebra generators
for $E_1[h_1^{-1}]$ 
that will turn out to be calculationally convenient.

\begin{defn}
\label{defn:F1-G1}
\mbox{}
\begin{enumerate}
\item
Let $F_1$ be the $\bM_2[h_1^{\pm 1}]$-subalgebra of $E_1[h_1^{-1}]$
on polynomial generators $h_0$, $h_{20}$, $h_{n1}$ for all $n \geq 2$,
and $h'_{n2}$ for all $n \geq 1$.
\item
Let $G_1$ be the 
$\F_2$-subalgebra of $E_1[h_1^{-1}]$
on polynomial generators $h_{n0}'$ for $n \geq 3$ and
$h_{ij}$ for $i \geq 1$ and $j \geq 3$.
\end{enumerate}
\end{defn}

Note that $F_1$ is a differential graded subalgebra of $E_1[h_1^{-1}]$
since 
$d_1(h_{20})=h_1h_0$ and 
$d_1(h_{n1})= h_1h_{n-1,2}'$.
The generators of $F_1$ are indicated in the figure below as the elements
that are outside of the shaded region.

Note also that $G_1$ is a differential graded subalgebra of $E_1[h_1^{-1}]$
because 
$d_1(h_{n,0}') = \sum_{j=3}^{n-1} h_{n-j,j}h_{j,0}'$ 
if $n\geq 3$.
The generators of $G_1$ are indicated in the figure below as the 
elements in the shaded regions.

\begin{prop}
\label{prop:E1-split} 
$E_1[h_1\inv]$ splits as a tensor product
$F_1 \otimes_{\F_2} G_1$.
\end{prop}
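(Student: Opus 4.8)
The plan is to exhibit the claimed tensor decomposition by comparing algebra generators and compatible differentials. Concretely, I would first observe that the localized $E_1$-page $E_1[h_1^{-1}]$ is, as a graded $\F_2$-algebra, polynomial on $h_1^{\pm 1}$ together with the generators $h_{ij}$ indexed by $i\geq 1$, $j\geq 0$. The change of generators replacing $h_{n0}$ by $h_{n0}'$ and $h_{n-1,2}$ by $h_{n-1,2}'$ is an invertible triangular substitution (each new generator differs from the old one by $h_1^{-1}$ times a polynomial in generators of strictly smaller Milnor--Witt stem), so the $h_{n0}'$ and $h_{n-1,2}'$ together with the untouched $h_{ij}$ still form a free polynomial generating set over $\F_2[h_1^{\pm 1}]$. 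Partitioning this generating set into the two lists appearing in Definition~\ref{defn:F1-G1} — the $F_1$-generators $h_0$, $h_{20}$, $h_{n1}$ ($n\geq 2$), $h'_{n2}$ ($n\geq 1$), and the $G_1$-generators $h_{n0}'$ ($n\geq 3$) and $h_{ij}$ ($i\geq 1$, $j\geq 3$) — immediately yields an isomorphism of graded $\F_2$-algebras $E_1[h_1^{-1}] \iso F_1 \otimes_{\F_2} G_1$, since a polynomial algebra on a disjoint union of generating sets is the tensor product of the polynomial algebras on each piece. (One has to double-check that the two lists are exhaustive and disjoint: every $h_{ij}$ with $j=0$ appears either as $h_{10}=h_1$ absorbed into the coefficient ring, as $h_{20}$ in $F_1$, or as $h_{n0}'$ in $G_1$ for $n\geq 3$; every $h_{ij}$ with $j=1$ is an $F_1$-generator; every $h_{ij}$ with $j=2$ is recorded via $h'_{n2}$ in $F_1$; and every $h_{ij}$ with $j\geq 3$ is a $G_1$-generator.)

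Next I would upgrade this to an isomorphism of \emph{differential graded} algebras, which is what the proposition really asserts when read in context (the point of the splitting is that the $h_1$-local May spectral sequence factors through it). For this I would invoke the two computations already recorded just before the statement: $F_1$ is closed under $d_1$ because $d_1(h_{20}) = h_1 h_0$ and $d_1(h_{n1}) = h_1 h_{n-1,2}'$ (and $d_1(h_0) = 0$, $d_1(h'_{n2})$ lands in products of $h_{k1}$'s — this last needs the explicit $d_1$-formula together with the definition of $h'_{n2}$); and $G_1$ is closed under $d_1$ because $d_1(h_{n0}') = \sum_{j=3}^{n-1} h_{n-j,j} h_{j,0}'$ for $n\geq 3$ and $d_1(h_{ij})$ for $j\geq 3$ is the classical Cartan formula $\sum_{0<k<i} h_{kj} h_{i-k,j+k}$, all of whose terms again have second subscript $\geq 3$. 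Since $d_1$ is a derivation and $F_1$, $G_1$ are each $d_1$-stable subalgebras generating $E_1[h_1^{-1}]$ freely, the differential on the tensor product is the tensor-product (Leibniz) differential, so $F_1\otimes_{\F_2} G_1 \to E_1[h_1^{-1}]$ is an iso of DGAs.

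The step I expect to be the genuine obstacle is verifying that the $d_1$-images of the modified generators $h'_{n2}$ and $h_{n0}'$ really do stay inside $F_1$ and $G_1$ respectively — in other words, that the corrections $h_1^{-1}\sum h_{k1}h_{n-k,k}$ and $h_1^{-1}h_{20}h_{n-1,1}$ were chosen precisely so that the ``mixed'' cross terms in $d_1$ (those involving one $h_{i0}$-type factor and one $h_{ij}$-type factor with $j\geq 1$) cancel. This is a bookkeeping computation with the formula $d_1(h_{ij}) = \sum_{0<k<i} h_{kj}h_{i-k,j+k}$: one expands $d_1(h_{n-1,2}' )$ and $d_1(h'_{n0})$, collects terms by Chow degree and by the subscripts that appear, and checks the cancellation. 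I would present this as a short lemma isolating the two identities
\[ d_1(h'_{n2}) = \textstyle\sum (\text{products of generators of }F_1), \qquad d_1(h'_{n0}) = \sum_{j=3}^{n-1} h_{n-j,j} h'_{j,0}, \]
and then remark that, granting these, the decomposition of generators plus the derivation property of $d_1$ finishes the proof. Everything else is formal: the splitting of a free graded-commutative algebra along a partition of its generators, transported along an invertible triangular change of variables.
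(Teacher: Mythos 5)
Your proof is correct and follows the same route as the paper, which simply observes that the triangular change of generators is invertible and that the $F_1$- and $G_1$-generators partition the full generating set, with $d_1$-stability of each factor already recorded in the surrounding text. One small indexing slip: in the convention $h_j = h_{1j}$ one has $h_{10}=h_0$ (an $F_1$-generator) and $h_{11}=h_1$ (the element absorbed into the coefficient ring), not $h_{10}=h_1$; this does not affect the argument.
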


\begin{proof}
This follows immediately from the definitions, using that
$h_{n0}'$ and $h_{n2}'$ can be used as algebra generators in place
of $h_{n0}$ and $h_{n2}$.
\end{proof}

\psset{linewidth=0.4mm}    

\begin{pspicture}(0,0)(10,7)
\pspolygon[linewidth=0,fillstyle=solid,linecolor=lightgray,fillcolor=lightgray](1.8,4)(4.6,0.2)(5.9,0.2)(2.7,4.4)
\pspolygon[linewidth=0,fillstyle=solid,linecolor=lightgray,fillcolor=lightgray](5.5,6.5)(9.2,6.5)(9.1,0.6)(5.02,5.63)
\rput(1,6){$h_0$}
\rput(2.5,6){$h_1$}
\rput(4,6){$h_2'=h_2$}
\rput(5.5,6){$h_3$}
\rput(7,6){$h_4$}
\rput(8.5,6){$h_5$}
\rput(1.7,5){$h_{20}$}
\psline{->}(1.7,5.1)(1,5.8)
\rput(3.2,5){$h_{21}$}
\psline{->}(3.2,5.1)(4,5.8)
\rput(4.7,5){$h_{22}'$}
\rput(6.2,5){$h_{23}$}
\rput(7.7,5){$h_{24}$}
\rput(2.5,4){$h_{30}'$}
\rput(4,4){$h_{31}$}
\psline{->}(4,4.1)(4.7,4.8)
\rput(5.5,4){$h_{32}'$}
\rput(7,4){$h_{33}$}
\rput(8.5,4){$h_{34}$}
\rput(3.2,3){$h_{40}'$}
\rput(4.7,3){$h_{41}$}
\psline{->}(4.7,3.1)(5.5,3.8)
\rput(6.2,3){$h_{42}'$}
\rput(7.7,3){$h_{43}$}
\rput(4,2){$h_{50}'$}
\rput(5.5,2){$h_{51}$}
\psline{->}(5.5,2.1)(6.2,2.8)
\rput(7,2){$h_{52}'$}
\rput(8.5,2){$h_{53}$}
\rput(4.7,1){$h_{60}'$}
\rput(6.2,1){$h_{61}$}
\psline{->}(6.2,1.1)(7,1.8)
\rput(7.7,1){$h_{62}'$}
\psset{linewidth=0.6mm}    
\psline(1.8,4)(4.6,0.2)
\psline(2.7,4.4)(5.9,0.2)
\psarc[fillstyle=solid,fillcolor=lightgray](2.25,4.2){0.48}{24}{205}
\psline(5.5,6.5)(9.2,6.5)
\psline(5.02,5.63)(9.1,0.6)
\psarc[fillstyle=solid,fillcolor=lightgray](5.5,5.93){0.57}{90}{215}
\end{pspicture}

We will now show that the perhaps obscurely defined subalgebra $G_1$
is isomorphic to the familiar classical May $E_1$-page.

\begin{prop} 
\label{prop:cl-G1}
Let $E_1^{\cl}$ be the $E_1$-page of the classical May spectral sequence.
Consider the algebra map $S:E_1^{\cl}\rtarr G_1$ determined by
\begin{enumerate}
\item
$S(h_{n0}) = h_{n+2,0}'$ for all $n \geq 1$.
\item
$S(h_{nk}) = h_{n,k+2}$ for all $n \geq 1$ and $k \geq 1$.
\end{enumerate}
The map $S$ is an isomorphism of differential graded algebras.
\end{prop}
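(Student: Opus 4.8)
The plan is to verify that $S$ is a well-defined algebra map, that it is bijective on the polynomial generators, and that it commutes with $d_1$; since both sides are polynomial algebras over $\F_2$ and $S$ matches up free generators, bijectivity on generators immediately gives that $S$ is an algebra isomorphism, and then the differential check upgrades it to an isomorphism of DGAs.

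First I would check that $S$ carries generators to generators bijectively. The classical May $E_1$-page $E_1^{\cl}$ is the polynomial $\F_2$-algebra on $h_{nk}$ for $n \geq 1$, $k \geq 0$. By Definition \ref{defn:F1-G1}, $G_1$ is the polynomial $\F_2$-algebra on $h'_{n0}$ for $n \geq 3$ together with $h_{ij}$ for $i \geq 1$, $j \geq 3$. The assignment $h_{n0} \mapsto h'_{n+2,0}$ ($n \geq 1$) is a bijection onto the first family of generators (the index $n+2$ runs over all integers $\geq 3$), and $h_{nk} \mapsto h_{n,k+2}$ ($n \geq 1$, $k \geq 1$) is a bijection onto the second family (the shifted index $k+2$ runs over all integers $\geq 3$). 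Hence $S$ sends a polynomial basis of indeterminates bijectively to a polynomial basis of indeterminates, so the induced algebra map is an isomorphism of graded algebras. (One should also note the grading is respected: $S$ is only claimed to preserve the May filtration and the internal degree, not the Chow degree — indeed $h_{n0}$ has Chow degree $1$ classically but all generators of $G_1$ have Chow degree $0$ — so the statement ``isomorphism of differential graded algebras'' is to be read as an isomorphism intertwining the $d_1$-differentials, which is all that is needed.)

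Next I would verify compatibility with the differentials. On $E_1^{\cl}$ the classical formula reads $d_1(h_{nk}) = \sum_{0 < j < n} h_{jk} h_{n-j, j+k}$. Applying $S$ to the two families separately: for $h_{n0}$ with $n \geq 1$ the image of the right-hand side is $\sum_{0<j<n} S(h_{j0}) S(h_{n-j,j})$. The term $h_{j0}$ with $j \geq 1$ maps to $h'_{j+2,0}$, and $h_{n-j,j}$ with $j \geq 1$ maps to $h_{n-j,j+2}$; reindexing with $m = n+2$, this is exactly $\sum_{j=3}^{m-1} h_{m-j, j} h'_{j,0}$ (the $j=1,2$ classical terms involve $h_{j0}$ with $j=1,2$, which are \emph{not} in $G_1$ — and correspondingly in the reindexed sum the range starts at $j=3$). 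By the remark following Definition \ref{defn:F1-G1}, this is precisely $d_1(h'_{m,0}) = S(d_1(h_{n0}))$. The case $h_{nk}$ with $k \geq 1$ is entirely parallel: $S$ of the classical Leibniz sum reproduces the $E_1[h_1^{-1}]$-differential on $h_{n,k+2}$ because none of the classical summands fall outside the $j \geq 1$, $k+2 \geq 3$ regime, so no ``correction'' terms are lost. Since $S$ is an algebra map and both differentials are derivations, agreement on generators forces $S \circ d_1^{\cl} = d_1 \circ S$ everywhere.

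The main obstacle — really the only subtle point — is the bookkeeping in the differential check on the $h'_{n0}$ family: one must see that the \emph{primed} generators are exactly what is needed so that the classical formula, after truncating the $j = 1, 2$ terms that leave $G_1$, matches the formula $d_1(h'_{n0}) = \sum_{j=3}^{n-1} h_{n-j,j} h'_{j,0}$ recorded before the proposition. This is precisely the reason the correction terms $h_1^{-1} h_{20} h_{n-1,1}$ were built into the definition of $h'_{n0}$: they absorb the $j=2$ contribution (and the $j=1$ contribution vanishes since $h_{1k} = h_1$ for appropriate $k$ and $h_1 h'$-type products are handled by the localization), leaving a differential supported entirely inside $G_1$. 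I would therefore present this step carefully, citing the identity $d_1(h'_{n0}) = \sum_{j=3}^{n-1} h_{n-j,j} h'_{j,0}$ from the discussion above, and treat the $h_{nk}$ ($k \geq 1$) case as routine.
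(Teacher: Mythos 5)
Your proposal is correct and is exactly the ``straightforward computation'' that the paper's one-line proof alludes to: match the polynomial generators bijectively, then check $S\circ d_1^{\cl}=d_1\circ S$ on generators using the reindexing $j\mapsto j+2$ and the identity $d_1(h'_{n0})=\sum_{j=3}^{n-1}h_{n-j,j}h'_{j0}$. (One small slip in your grading aside: the generators $h'_{n0}$ of $G_1$ have Chow degree $1$, not $0$ --- e.g.\ $h'_{30}=h_1^{-2}c_0$ sits in $(2,3,1,1)$ --- but this does not affect the argument.)
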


\begin{pf}
We need to check that $S$ preserves the May $d_1$ differential.
This is a straightforward computation.
\end{pf}

\subsection{The $h_1$-local $E_2$-term}
We now have a good understanding of $E_1[h_1^{-1}]$
from Propositions \ref{prop:E1-split} and \ref{prop:cl-G1}.
Next we analyze the $h_1$-localization
$E_2[h_1^{-1}]$ of the motivic May $E_2$-page.
Since localization is exact, $E_2[h_1^{-1}]$ is isomorphic
to the cohomology of the differential graded algebra
$E_1[h_1^{-1}]$.

\begin{prop}
$E_2[h_1^{-1}]$ is isomorphic to 
\[ \bM_2[h_1^{\pm}][b_{20},b_{21},b_{31},b_{41},\dots] \otimes_{\F_2} G_2,\]
where 
$G_2$ is the cohomology of the differential graded algebra $G_1$
from Definition \ref{defn:F1-G1}.
\end{prop}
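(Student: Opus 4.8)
The plan is to compute the cohomology of the differential graded algebra $E_1[h_1^{-1}]$ by exploiting the tensor splitting $E_1[h_1^{-1}] \iso F_1 \otimes_{\F_2} G_1$ from Proposition~\ref{prop:E1-split}. Since $F_1$ and $G_1$ are differential graded subalgebras, the Künneth theorem gives $H^*(E_1[h_1^{-1}]) \iso H^*(F_1) \otimes_{\F_2} H^*(G_1)$, and we set $G_2 = H^*(G_1)$ by definition. So the real content is the computation of $H^*(F_1)$, and the claim amounts to showing $H^*(F_1) \iso \bM_2[h_1^{\pm 1}][b_{20}, b_{21}, b_{31}, b_{41}, \dots]$.

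First I would recall that $F_1$ is the $\bM_2[h_1^{\pm 1}]$-polynomial algebra on the generators $h_0$, $h_{20}$, $h_{n1}$ for $n \geq 2$, and $h'_{n2}$ for $n \geq 1$, with differential determined by $d_1(h_{20}) = h_1 h_0$ and $d_1(h_{n1}) = h_1 h'_{n2}$ (and $d_1$ vanishing on $h_0$ and on the $h'_{n2}$, which one checks against the May $d_1$ formula after the change of generators). The key observation is that, because $h_1$ is a unit, each pair $(h_{20}, h_0)$ and $(h_{n1}, h'_{n2})$ forms an acyclic "Koszul-type" piece: in the sub-dga on just $h_1^{\pm 1}$, $h_{20}$, $h_0$, the differential $d_1(h_{20}) = h_1 h_0$ makes $h_0$ exact and kills $h_{20}$ in cohomology except for the class $h_{20}^2$, which is the permanent cycle $b_{20}$ (indeed $d_1(h_{20}^2) = 2 h_1 h_0 h_{20} = 0$ in characteristic $2$). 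Similarly each $h_{n1}^2 = b_{n1}$ survives while the linear part is killed. Concretely I would filter $F_1$ by the number of acyclic pairs "resolved," or more efficiently just invoke the standard fact that over $\F_2$ a polynomial dga $\F_2[x,y]$ with $d(x) = uy$, $u$ a unit, has cohomology $\F_2[x^2]$; then tensor these computations together via Künneth. This yields $H^*(F_1) \iso \bM_2[h_1^{\pm 1}][h_{20}^2, h_{21}^2, h_{31}^2, h_{41}^2, \dots]$. Renaming $h_{20}^2 = b_{20}$, $h_{n1}^2 = b_{n1}$ (matching the classical May-theoretic notation for these squares) gives exactly the first tensor factor in the statement.

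Finally I would assemble: $E_2[h_1^{-1}] \iso H^*(E_1[h_1^{-1}]) \iso H^*(F_1) \otimes_{\F_2} H^*(G_1) \iso \bM_2[h_1^{\pm 1}][b_{20}, b_{21}, b_{31}, b_{41}, \dots] \otimes_{\F_2} G_2$, which is the desired isomorphism. I would also note that exactness of localization (already remarked in the text) justifies $E_2[h_1^{-1}] \iso H^*(E_1[h_1^{-1}])$. The main obstacle is the bookkeeping in the $H^*(F_1)$ computation: one must verify carefully that $d_1$ really does act on the modified generators $h_0, h_{20}, h_{n1}, h'_{n2}$ in the clean "paired" form claimed — i.e. that the cross-terms in the classical $d_1$ formula are absorbed precisely by the definitions of $h'_{n0}$ and $h'_{n2}$ — and that the pairs are independent so that Künneth applies cleanly rather than interacting. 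Everything beyond that is the routine $\F_2[x,y]/(d x = u y)$ calculation and an invocation of Künneth over a field.
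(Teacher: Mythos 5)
Your proposal is correct and follows the same route as the paper: the paper's proof simply cites the splitting $E_1[h_1^{-1}]\iso F_1\otimes_{\F_2}G_1$ and declares the cohomology of $F_1$ ``straightforward,'' and you have supplied exactly the intended details, namely that $F_1$ decomposes into Koszul pairs $(h_{20},h_0)$ and $(h_{n1},h'_{n-1,2})$ with $d_1(x)=h_1y$, each contributing $\F_2[x^2]$ after inverting $h_1$, so that $H^*(F_1)\iso \bM_2[h_1^{\pm 1}][b_{20},b_{21},b_{31},\dots]$. The only quibble is a harmless index slip (you pair $h_{n1}$ with $h'_{n2}$ rather than $h'_{n-1,2}$ as in the paper's formula $d_1(h_{n1})=h_1h'_{n-1,2}$), which does not affect the argument.
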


\begin{proof}
This follows from the splitting given in Proposition \ref{prop:E1-split}.
The calculation of the cohomology of the subalgebra $F_1$ from
Definition \ref{defn:F1-G1} is straightforward.
\end{proof}

Because of Proposition \ref{prop:cl-G1},
we know that $G_2$ is isomorphic to the classical
May $E_2$-page.
May's original calculation \cite{May} of the classical $E_2$-term in stems below $156$ gives us complete understanding of $G_2$ in a much larger range
because the map $S$ from Proposition \ref{prop:cl-G1} approximately
quadruples degrees.

Generators and relations for $E_2[h_1^{-1}]$ 
up to the Milnor-Witt 66-stem
can be found in Tables~\ref{E2GenTable} and \ref{E2RelTable},
where we use the following notation.

\begin{notn}
For an element $x$ in the classical May spectral sequence,
let $\altn{x}$ be the element $S(x)$ of the
$h_1$-localized motivic May spectral sequence from Proposition~\ref{prop:cl-G1}.
\end{notn}

According to this notation, the classical element $c_0 = h_1h_0(1)$ may be written as $c_0 = h_1^2\altn{h_0}$, so that the elements $c_0$ and $\altn{h_0}$ are practically interchangeable. As the primary goal of our computation is to relate the answer to $\Ext_A$, we will most often choose to work with $c_0$. 
However, we will opt instead to use $\altn{h_0}$ when it illuminates the 
structure of the $h_1$-localized motivic May spectral sequence,
especially in Section \ref{subsctn:May-diff}.

See also Table \ref{E4NotationTable} for additional notation used
on later pages. The names of many classes in the May spectral sequence have been chosen to agree with the notation of \cite{Istems}, and we denote the remaining new classes by $y_n$.

\subsection{The $h_1$-local May differentials}
\label{subsctn:May-diff}

We now understand $E_2[h_1^{-1}]$ in a very large range of dimensions.
The next step is to compute the higher differentials to obtain 
$E_\infty[h_1^{-1}]$.

\begin{prop}
\label{prop:May-d2}
Table \ref{E2GenTable} gives the values of the May $d_2$ differential on 
the multiplicative generators of $E_2[h_1^{-1}]$ through the Milnor-Witt 66-stem.
\end{prop}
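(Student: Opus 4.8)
The plan is to compute the May $d_2$ differential by exploiting the splitting $E_1[h_1^{-1}] \iso F_1 \otimes_{\F_2} G_1$ and the isomorphism $S : E_1^{\cl} \rtarr G_1$ from Propositions~\ref{prop:E1-split} and \ref{prop:cl-G1}. The $E_2$-page correspondingly splits as $\bM_2[h_1^{\pm}][b_{20}, b_{21}, b_{31}, \dots] \otimes_{\F_2} G_2$, where $G_2$ is identified with the classical May $E_2$-page via $S$. On multiplicative generators coming from the $F_1$-factor (namely $h_0$ and the $b_{ij}$), the $d_2$ differential is already classical and well-documented in \cite{DI} or follows directly from the cobar formulas for the $\tau_i$ coproducts; these contribute the familiar values such as $d_2(b_{20}) = h_1^3 h_0 + \dots$ up to the $h_1$-local rewriting. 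So the real content is computing $d_2$ on the generators of $G_2$.

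First I would recall that the classical May $d_2$ is known explicitly in the relevant range from May's thesis \cite{May}, so $d_2$ on each classical generator $x$ is some explicit polynomial $p(x)$ in classical $E_2$-classes. Then I would transport this along $S$: since $S$ is a map of differential graded algebras (it commutes with $d_1$, hence induces a map on $E_2$-pages that also commutes with $d_2$), we get $d_2(\altn{x}) = \altn{p(x)}$ in the motivic $h_1$-local May spectral sequence, at least for the part of $d_2$ landing in the $G_2$-factor. The subtlety is that the motivic $d_2$ may have extra terms landing in the $F_1$-derived factor $\bM_2[h_1^{\pm}][b_{20}, b_{21}, \dots]$ — these are ``mixed'' terms multiplying powers of $h_1$ and the $b$'s with $G_2$-classes. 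These must be pinned down by a direct cobar-complex computation, using the motivic coproduct formulas $\Delta(\tau_k) = \tau_k \otimes 1 + \sum \xi_{k-i}^{2^i} \otimes \tau_i$ and $\tau_i^2 = \tau \xi_{i+1}$, together with bookkeeping of Chow degree: since the generators of $G_2$ have Chow degree $0$ and $h_1$, $b_{ij}$ have Chow degrees $0$ and $2$ respectively while $h_0$ has Chow degree $1$, the Chow grading tightly constrains which mixed terms can appear.

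The second key step is a careful degree count to verify that the tables are complete: for each multiplicative generator through the Milnor-Witt $66$-stem, list all classes in the target degree $(t-1, f+1, c)$ of $d_2$, and confirm the claimed value is forced (or computed directly). Here the fact that $S$ roughly quadruples degrees means the range of classical input needed is enormous in classical terms but we only need May's $E_2$ data which is available well past stem $156$, so this is safe. I would organize the verification generator by generator following Table~\ref{E2GenTable}, handling the $F_1$-generators by the standard motivic computation and the $\altn{x}$-generators by $d_2(\altn{x}) = \altn{d_2^{\cl}(x)} + (\text{possible } F_1\text{-correction terms})$, with the correction terms determined by cobar calculations analogous to the $d(\tau_1|\tau_1|\xi_1 + \cdots)$ computation used in the proof of Proposition~\ref{MainExtProp}.

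The main obstacle I anticipate is precisely identifying the mixed correction terms — the pieces of $d_2(\altn{x})$ that are not simply $S$ applied to the classical answer but involve genuine interaction between the $F_1$ and $G_1$ factors (the motivic $\tau$-linear terms arising from $\tau_i^2 = \tau\xi_{i+1}$). These are exactly the ``exotic'' motivic contributions, and there is no purely formal shortcut: each must be extracted by an explicit, if routine, cobar computation. The saving grace is that the four-fold grading (May weight, Milnor-Witt stem, Adams filtration, Chow degree) is very rigid, so in most cases only one or two monomials are available in the target and the computation reduces to deciding a coefficient in $\F_2$; I would expect only a handful of generators to require the full cobar bookkeeping, with the rest determined by the $S$-transport of May's classical computation plus degree constraints.
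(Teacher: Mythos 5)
Your overall strategy---reduce to the classical May $d_2$ and control the motivic corrections with the Chow grading---is in the right spirit, but the central transport step is wrong. You assert that since $S\colon E_1^{\cl}\rtarr G_1$ commutes with $d_1$, the induced isomorphism of $E_2$-pages also commutes with $d_2$. That implication is invalid in general (higher May differentials are computed from the filtered cobar complex, not from the $E_1$-page as a differential graded algebra, and $S$ is a purely algebraic regrading, not induced by any map of Hopf algebras or filtered complexes), and it is demonstrably false here. Classically $d_2(b_{20}) = h_0^2h_2 + h_1^3$, so your recipe would give $d_2(\altn{b_{20}}) = \altn{h_0}^2\altn{h_2} + \altn{h_1}^3$. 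But $\altn{h_0}$ has Chow degree $1$ while $\altn{h_1}=h_3$ has Chow degree $0$: the first term sits in tridegree $(t,f,c)=(13,3,2)$ and the second in $(9,3,0)$, so the image of a homogeneous classical class is not even homogeneous motivically. The term $\altn{h_1}^3$ must be \emph{deleted}---Table~\ref{E2GenTable} records $d_2(\altn{b_{20}}) = \altn{h_0^2h_2}$ only---whereas your framework only provides for \emph{adding} correction terms in the $F_1$-factor. The same failure occurs for every generator whose definition involves the $h_{i0}$'s ($\altn{h_0}$, $\altn{h_0(1)}$, $\altn{b_{20}}$, $\altn{b_{30}}$, $\altn{b_{40}}$); your method is sound only for generators built purely from $h_{ij}$ with $j\geq 3$ (such as $\altn{b_{21}}$ and $\altn{h_1(1)}$), where the regrading is uniform in Chow degree.

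The paper's proof avoids $S$ entirely. By \cite[\S 5]{DI}, the motivic May $d_2$ agrees with the classical one under the \emph{index-preserving} correspondence $h_{ij}\leftrightarrow h_{ij}$, up to inserting powers of $\tau$ to balance weights; this comparison is induced by an actual map of cobar complexes and therefore does commute with all differentials. One then evaluates these formulas on the monomials defining the bold generators (e.g.\ $\altn{b_{20}} = h_{40}^2 + h_1^{-2}h_{20}^2h_{31}^2$) and uses that $h_0$ and $h_2$ vanish in $E_2[h_1^{-1}]$, since $d_1(h_{20})=h_0h_1$ and $d_1(h_{21})=h_1h_2$ make them $h_1$-torsion. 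To repair your argument, replace the $S$-transport with this index-preserving comparison; the splitting $F_1\otimes_{\F_2}G_1$ and the Chow-degree bookkeeping can then legitimately be used, as you intend, to organize the answer and rule out extra terms.
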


\begin{proof}
As discussed in \cite[\S5]{DI}, the $d_2$ differential of the motivic May spectral is easy to determine from the classical $d_2$ differential; 
the formulas are the same, except that powers of $\tau$ must sometimes be inserted to balance the weights. This, combined with the fact that $h_1$-localization kills the classes $h_0$ and $h_2$, leads to the values in Table~\ref{E2GenTable}.
\end{proof}

The values of the May $d_2$ differential given in Table \ref{E2GenTable}
allow us to compute $E_4[h_1^{-1}]$ directly.
A chart of $E_4[h_1^{-1}]$ through the Milnor-Witt 66-stem is given in
\cite{GI}.  

Now we proceed to the higher May differentials and the higher
$h_1$-localized pages of the motivic May spectral sequence.

\begin{prop} 
For $r \geq 4$,
some values for the May $d_r$ differentials are given in 
Tables~\ref{d4MayTable}--\ref{higherMayTable}.
The May $d_r$ differentials are zero on all other multiplicative generators
of $E_r[h_1^{-1}]$ through the Milnor-Witt 66-stem.
\end{prop}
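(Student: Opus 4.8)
We obtain the higher differentials from three sources: the classical May spectral sequence transported along the isomorphism $S$ of Proposition~\ref{prop:cl-G1}, direct cobar computations for the new multiplicative generators, and the known value of $E_\infty[h_1^{-1}]$ coming from Theorem~\ref{ExtAThm}. To begin, recall from Propositions~\ref{prop:E1-split} and~\ref{prop:cl-G1} that $E_1[h_1^{-1}] \iso F_1 \otimes_{\F_2} G_1$, with an isomorphism $S\colon E_1^{\cl}\rtarr G_1$ of differential graded algebras, so that $E_2[h_1^{-1}] \iso F_2 \otimes_{\F_2} G_2$ with $G_2$ the classical May $E_2$-page. Since $S$ multiplies the Milnor-Witt stem by roughly a factor of four, the portion of $G_2$ in Milnor-Witt stems at most $66$ is the image under $S$ of the classical May $E_2$-page in internal stems at most about $17$, a range in which the differentials of the classical May spectral sequence are completely known \cite{May}. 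Because $S$ is an isomorphism of differential graded algebras it induces compatible isomorphisms on all later pages, so $d_r(\altn{x}) = S(d_r(x))$ for every classical multiplicative generator $x$; this settles the higher differentials on the $\altn{x}$ generators.

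It then remains to treat the generators coming from the $F_1$-factor, namely $h_1^{\pm 1}$, the classes $b_{20}$ and $b_{n1}$ for $n\geq 2$ arising from the squares of the generators of $F_1$, and the new indecomposables $y_n$ recorded in Table~\ref{E4NotationTable}. The class $h_1$ is a permanent cycle. For each of the classes $b_{20}$, $b_{n1}$, and $y_n$ one writes down an explicit cobar representative and computes its boundary, in the same spirit as the identity $d_3(v_1^2) = \tau h_1^3$ in the proof of Proposition~\ref{MainExtProp}; the only new features are that, exactly as in Proposition~\ref{prop:May-d2}, powers of $\tau$ may have to be inserted to balance weights, and that any classical correction terms involving $h_0$ or $h_2$ vanish after $h_1$-localization. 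In addition one must check that no $d_r(\altn{x})$ acquires an extra summand living in the $F$-factor: such a summand would be a product of a power of $h_1$, a monomial in the $b$'s, and a monomial in the $\altn{x}$'s, and a bookkeeping of internal degrees and Chow degrees rules this out through the Milnor-Witt $66$-stem.

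Finally, to prove that the tabulated differentials are the only ones, we use that $E_\infty[h_1^{-1}] \iso \Ext_A[h_1^{-1}] \iso \F_2[h_1^{\pm 1},v_1^4,v_2,v_3,\dots]$ by Theorem~\ref{ExtAThm}. Starting from the chart of $E_4[h_1^{-1}]$ in \cite{GI} and running the differentials already produced, one computes the resulting $E_\infty$-page rank by rank through the Milnor-Witt $66$-stem and verifies that it matches this polynomial algebra exactly; any additional differential on a multiplicative generator would push some rank of $E_\infty$ strictly below its known value, which is impossible, so there are none. I expect the genuinely delicate parts of the argument to be precisely the possible crossing differentials between the $F$- and $G$-factors, which are not automatically controlled once one leaves the $E_2$-page, and the final rank count that certifies the absence of further differentials; both of these lean heavily on the charts of \cite{GI} together with the rigidity imposed by Theorem~\ref{ExtAThm}.
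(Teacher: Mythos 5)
Your third ingredient --- forcing differentials from the known answer $\Ext_A[h_1^{-1}] \iso \F_2[h_1^{\pm 1},v_1^4,v_2,\dots]$ of Theorem~\ref{ExtAThm} --- is in fact the paper's main tool, and that part of your outline is on the right track. But your first ingredient contains a genuine error. The map $S$ of Proposition~\ref{prop:cl-G1} is an isomorphism of differential graded algebras only at the level of $(E_1,d_1)$; it does \emph{not} commute with the higher differentials, and indeed it cannot, because it fails to preserve the trigrading in any uniform way: classical $h_{n0}$ goes to $\altn{h_{n0}}$ in Chow degree $1$, while classical $h_{nk}$, $k\geq 1$, goes to $\altn{h_{nk}}$ in Chow degree $0$, so classical elements in a single bidegree are sent to motivic elements in several different degrees. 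Concretely, the classical $d_2(b_{20})=h_0^2h_2+h_1^3$ becomes $d_2(\altn{b_{20}})=\altn{h_0^2h_2}$ in Table~\ref{E2GenTable}: the would-be summand $\altn{h_1}^3=h_3^3$ sits in Milnor--Witt stem $9$ and Chow degree $0$, not in the degree of $d_2(\altn{b_{20}})$, so it simply cannot appear. The same phenomenon persists on higher pages, which is why the paper computes $d_4(\altn{P})$, $d_4(\altn{\Delta})$, and $d_8(\altn{P^2})$ from scratch with Nakamura's squaring operations applied to the \emph{motivic} $d_2$, rather than transporting classical $d_4$'s and $d_8$'s along $S$. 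Your parenthetical worry about ``extra summands in the $F$-factor'' does not capture this; the discrepancy already occurs inside the $G$-factor. Likewise, explicit cobar representatives for classes like $y_{45}$ or $y_{60}$ in May filtration $12$--$18$ are not a realistic substitute.

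The second gap is that a rank count against $E_\infty[h_1^{-1}]$ only tells you that a given class must die; it does not tell you \emph{which} differential kills it when several candidates exist in the right degree. This is exactly where the real work of the proposition lies: for instance, $c_0^2 e_0 h_6$ could a priori be killed by $d_{10}(\Delta_1 B_1)$ or by $d_{14}(g^3 B_1)$, and deciding between them (Lemma~\ref{lem:d10}) requires bootstrapping hidden multiplicative relations such as $e_0^2\cdot e_0g^3+(e_0g)^3=h_1^{13}\Delta_1 B_1$ against the algebraic independence of the $v_n$; similarly $d_8(\Delta_1\altn{P})$ is pinned down only by first knowing $\Delta_1 B_1$ survives. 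Your proposal acknowledges that the endgame is ``delicate'' but does not supply any mechanism for resolving these ambiguities, so as written it does not establish the uniqueness of the tabulated differentials.
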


\begin{pf}
Most of the differentials are forced by the known structure
of $\Ext_A[h_1^{-1}]$ given in Theorem~\ref{ExtAThm}.
For example, Theorem~\ref{ExtAThm} implies that 
$h_1^k v_3 v_4$ are the only non-zero elements in the Milnor-Witt 22-stem
with Chow degree 2.  We know that $h_1^k e_0 \cdot e_0 g$ survive the
May spectral sequence to detect these elements.
However, the element $\phi$ in the May $E_4$-page is also in the
Milnor-Witt 22-stem with Chow degree 2.  Therefore, it cannot survive the
May spectral sequence.  The only possibility is that
$d_6(\phi)$ equals $h_1 c_0^2 h_5$.

There are a handful of  more difficult cases, which are handled individually
in the following lemmas.
\end{pf}

\begin{lemma}
\label{lem:d4}
\mbox{}
\begin{enumerate}
\item
$d_4(\altn{P}) = \altn{h_0^4h_3}$.
\item
$d_4(\altn{\Delta}) = \altn{h_4P}$.
\end{enumerate}
\end{lemma}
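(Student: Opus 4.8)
The plan is to verify each differential by lifting to the cobar complex, exploiting the isomorphism $S$ of Proposition~\ref{prop:cl-G1} to reduce the motivic computation to a known classical statement whenever possible. Recall from Proposition~\ref{prop:cl-G1} that $S$ carries the classical May element $h_{n0}$ to $\altn{h_{n0}} = h'_{n+2,0}$ and is an isomorphism of differential graded algebras; moreover, by the note following Proposition~\ref{prop:cl-G1}, $S$ commutes with the higher May differentials up to the reindexing $t\mapsto \sim$. So the natural first step is to recall the classical May differentials underlying these two statements. Classically, one has $d_4(b_{20}^2) = h_1^4 h_3 + h_0^4 h_3$ (in the localized setting, $h_1^4 h_3$), which is the origin of the classical relation $P h_1 \ne 0$ and the class $P = \langle h_3, h_0^4, \cdot\rangle$-type Massey product behavior; here $P = b_{20}^2 b_{21}$ up to lower terms, and $d_4(P) = h_0^4 h_3$ in $\Ext_{A_\cl}$. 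Similarly $\Delta$ is the classical May class (the image of $b_{20}^2 b_{22}^2$-type generator) with $d_4(\Delta) = h_0^4 h_4 + \cdots = h_4 P$ after accounting for $h_0$-multiples. Applying $S$ and localizing, $d_4(\altn{P}) = \altn{h_0^4 h_3} = \altn{h_0}^4 \altn{h_3}$ and $d_4(\altn{\Delta}) = \altn{h_4 P} = \altn{h_4}\,\altn{P}$, which is exactly (1) and (2) once one checks that no $h_1$-torsion correction (a power of $\tau$, say) intervenes.

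The key steps, in order, are: (i) identify $\altn{P}$ and $\altn{\Delta}$ explicitly as polynomials in the generators $h'_{n0}$ and $h_{ij}$ ($j\ge 3$) of $G_1$, using Table~\ref{E2GenTable}/\ref{E4NotationTable} and the map $S$; (ii) recall the classical $d_4$ on $P$ and on $\Delta$ in $\Ext_{A_\cl}$, e.g.\ from May's thesis \cite{May} or from \cite[\S3]{R}; (iii) transport these via $S$, using that $S$ is a map of differential graded algebras compatible with higher May differentials (this is the content of the remark after Proposition~\ref{prop:cl-G1}, or else a direct cobar check); (iv) check weights/Chow degrees to rule out a spurious factor of $\tau$: since $\altn{h_0}$, $\altn{h_3}$, $\altn{h_4}$, $\altn{P}$, $\altn{\Delta}$ all lie in Chow degree $0$, and $\tau$ has Chow degree $-1$, the target and source already have matching Chow degree, so no $\tau$ can appear; (v) conclude that the stated formulas hold exactly, and observe (for consistency with Theorem~\ref{ExtAThm}) that these differentials are in fact \emph{forced}: $\altn{P}$ lies in the Chow-degree-zero part, which by the remark after Theorem~\ref{ExtAThm} must localize to $\F_2[h_1^{\pm 1}]$, so $\altn{P}$ cannot survive and $d_4(\altn P)$ is the unique nonzero possibility in its tridegree, namely $\altn{h_0^4 h_3}$ (and similarly for $\altn\Delta$).

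The main obstacle is step (i)–(ii): pinning down the precise identification of $\altn{P}$ and $\altn{\Delta}$ as elements of the localized motivic May $E_4$-page and matching them against the correct classical generators. The subtlety is that "$P$" and "$\Delta$" are only well-defined modulo lower May filtration and modulo $h_0$- (respectively $h_1$-) multiples, so one must be careful that the chosen representatives in $E_4[h_1^{-1}]$ are the images under $S$ of the chosen classical representatives, rather than differing by a decomposable correction term whose $d_4$ is a priori different. Once the representatives are matched — which is exactly what the notation $\altn{x} := S(x)$ is designed to guarantee — the differentials follow formally from the classical ones, since $S$ is an isomorphism of spectral sequences onto $G_\bullet$ and the localized May differential on $G_r$ is the one induced from the classical May differential. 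A secondary, more bookkeeping-style obstacle is confirming that $\altn{h_0^4 h_3}$ and $\altn{h_4 P}$ are nonzero in $E_4[h_1^{-1}]$ (i.e.\ that the $d_1$- and $d_2$-cycles they represent are not boundaries), but this is immediate from the classical fact that $h_0^4 h_3$ and $h_4 P$ are nonzero on the classical $E_4$-page together with the injectivity of $S$.
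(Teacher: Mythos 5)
Your argument hinges on the claim that the isomorphism $S$ of Proposition~\ref{prop:cl-G1} commutes with the \emph{higher} May differentials, so that the classical formulas $d_4(b_{20}^2)=h_0^4h_3$ and $d_4(b_{30}^2)=h_4b_{20}^2$ can simply be transported. That is the whole content of the lemma, and it is not established: Proposition~\ref{prop:cl-G1} only asserts that $S$ is an isomorphism of \emph{differential graded algebras}, i.e.\ compatible with $d_1$, which yields $G_2\iso E_2^{\cl}$ as algebras and nothing more. There is no ``note following Proposition~\ref{prop:cl-G1}'' asserting compatibility with $d_r$ for $r\ge 2$; indeed $S$ is not induced by any map of filtered cobar complexes or Hopf algebras, so there is no formal reason for such compatibility, and the splitting $E_1[h_1^{-1}]\iso F_1\otimes G_1$ is a splitting of $E_1$ only --- the higher differentials thoroughly mix the two factors (e.g.\ $d_4(g)=h_1^4\altn{h_1}$, $d_8(g^2)=h_1^8\altn{h_3}$, $d_6(\phi)=h_1c_0^2\altn{h_3}$ in Tables~\ref{d4MayTable}--\ref{d8MayTable}), so the localized spectral sequence is certainly not a tensor product of spectral sequences beyond $E_1$. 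The paper instead proves both formulas directly, by applying Nakamura's squaring operations $\Sq_i$ in the May spectral sequence to the known $d_2$'s of $\altn{b_{20}}$ and $\altn{b_{30}}$ (e.g.\ $d_4(\Sq_0\altn{b_{20}})=\Sq_1 d_2(\altn{b_{20}})=\Sq_1(\altn{h_0^2h_2})=\altn{h_0^4h_3}$); this reproduces the classical computation rather than quoting it, which is exactly the step your transport principle skips.

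Two secondary points. First, your fallback argument that the differentials are ``forced'' by Theorem~\ref{ExtAThm} is not available here: the paper explicitly singles out Lemma~\ref{lem:d4} as one of the ``more difficult cases'' \emph{not} settled by that counting argument, and your supporting claim that $\altn{h_0},\altn{P},\altn{\Delta}$ lie in Chow degree $0$ is false (they lie in Chow degrees $1$, $4$, $4$ respectively, per Tables~\ref{E2GenTable} and \ref{E4NotationTable}); moreover $\Ext_A[h_1^{-1}]$ is nonzero in the tridegree of $\altn{P}$ (it contains $v_2^2v_3v_4$), so ruling out survival of $\altn{P}$ by pure counting requires more care than you give. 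Second, several of your classical assertions are garbled: $d_4(b_{20}^2)=h_1^4h_3+h_0^4h_3$ is not degree-homogeneous, and $P$ is not ``$b_{20}^2b_{21}$ up to lower terms.'' These slips do not affect the logical structure, but the missing compatibility of $S$ with $d_4$ does: as written, the proof does not go through.
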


\begin{pf}
For the first formula,
we use Nakamura's squaring operations \cite{Nak} in the May spectral sequence
to compute that 
\[\begin{split}
 d_4(\altn{P}) &= d_4 (\Sq_0(\altn{b_{20}})) = \Sq_1 d_2(\altn{b_{20}}) = \Sq_1(\altn{h_0^2h_2}) \\ 
 &= \Sq_1(\altn{h_0^2})\Sq_0(\altn{h_2}) + \Sq_0(\altn{h_0^2})\Sq_1(\altn{h_2}) = 0 + \altn{h_0^4}\altn{h_3}.
 \end{split}\]

The proof for the second formula is similar:
\[\begin{split}
 d_4(\altn{\Delta}) &= d_4 (\Sq_0(\altn{b_{30}})) = \Sq_1 d_2(\altn{b_{30}}) = \Sq_1(\altn{h_3b_{20}}) \\ 
 &= \Sq_1(\altn{h_3})\Sq_0(\altn{b_{20}}) + \Sq_0(\altn{h_3})\Sq_1(\altn{b_{20}}) =  \altn{h_4}\altn{P}+ 0.
 \end{split}\]
\end{pf}

\begin{lemma}
\label{lem:d6}
$d_6(c_0g\phi\Delta_1) = h_1^7 D_3' \Delta_1$. 
\end{lemma}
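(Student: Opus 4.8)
The plan is to reuse Nakamura's squaring operations in the May spectral sequence, exactly as in the proof of Lemma \ref{lem:d4}, combined with a Leibniz-rule argument and the already-established differentials on lower-weight generators. The target of the differential, $c_0 g \phi \Delta_1$, should be recognized as (up to a power of $h_1$ and the $S$-periodicity operator) the image of a $\Sq_0$ of a generator supporting a known $d_4$ or $d_6$. Concretely, $\Delta_1$ plays here the role that $P$ plays one ``octave'' below, so I expect $c_0 g \phi \Delta_1 = \Sq_0(\text{something})$ up to a unit times $h_1^?$, and then $d_6$ of it is computed by $\Sq_k d_r$ of that something.

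First I would assemble the raw data: look up the May degree $(m-f,t,f,c)$ of each of $c_0$, $g$, $\phi$, $\Delta_1$, and $D_3'$ from Tables \ref{E2GenTable}, \ref{E4NotationTable} and \ref{d4MayTable}--\ref{higherMayTable}, and confirm that the purported differential $d_6(c_0 g \phi \Delta_1) = h_1^7 D_3' \Delta_1$ is degree-consistent (it must raise $f$ by one, fix $t$ and $c$, and lower $m-f$ by the appropriate amount). This degree check is routine but essential, and it will also pin down the power of $h_1$ on the target. Next I would invoke the higher-page Leibniz rule: since $\Delta_1 = \altn{\Delta}$ and we know $d_4(\altn{\Delta}) = \altn{h_4 P}$ from Lemma \ref{lem:d4}(2), $\Delta_1$ is \emph{not} a $d_6$-cycle a priori, so $c_0 g \phi \Delta_1$ cannot be handled by naively applying Leibniz to the factor $\Delta_1$; instead I would treat $c_0 g \phi$ first, show $d_4(c_0 g \phi) = 0$ (so that $c_0 g \phi$ survives to $E_6$ — this uses that $\phi$ is a $d_4$-cycle with $d_6(\phi) = h_1 c_0^2 h_5$ as established in the preceding proposition, and that $c_0, g$ are May permanent cycles through this range), and then compute $d_6(c_0 g \phi \cdot \Delta_1)$ via the Leibniz rule as $d_6(c_0 g \phi)\cdot \Delta_1 + c_0 g \phi \cdot d_6(\Delta_1)$. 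The first term is $h_1 c_0^3 h_5 \Delta_1$ (from $d_6(\phi) = h_1 c_0^2 h_5$ times $c_0 g$, using $g \cdot c_0^2 h_5 \sim$ an expression I'd need to identify), and the second term vanishes because $\Delta_1$ already died at $E_4$ (more precisely, $c_0 g \phi \cdot \Delta_1$ only makes sense as a class on $E_6$ if $c_0 g \phi$ and $\Delta_1$ are both $E_6$-classes, which forces me to be careful — $\Delta_1$ is only an $E_4$-class, so in fact the correct reading is that $c_0 g \phi \Delta_1$ denotes a specific surviving class on $E_6$, perhaps $= \Sq_0$ of a lower class, and I should compute via squaring rather than via this decomposition).

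Given that subtlety, the cleaner route — and the one I'd actually pursue — is the squaring-operation route: identify an element $w$ on an earlier page with $\Sq_0(w)$ (or $\Sq_1(w)$, etc.) equal to $c_0 g \phi \Delta_1$ up to a power of $h_1$, using that $\Sq_0$ quadruples internal degree and that $\Delta_1$ is the ``$\Sq_0$-double'' of $\altn{P}$ which is itself $\Sq_0(\altn{b_{20}})$. Then apply Nakamura's formula $d_{2r}(\Sq_i x) = \Sq_{i+?}(d_r x) + (\text{error terms from the Cartan formula})$ to push the known lower differential ($d_4(\altn P) = \altn{h_0^4 h_3}$, i.e.\ $d_4$ on the $c_0$-side, or $d_6(\phi) = h_1 c_0^2 h_5$) up to the claimed $d_6$. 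Expanding the Cartan formula on the product and discarding terms that vanish after $h_1$-localization (which kills $h_0$ and $h_2$, as noted in the proof of Proposition \ref{prop:May-d2}) should leave exactly $h_1^7 D_3' \Delta_1$, once I've matched $\altn{h_0^4 h_3} \cdot (\text{stuff})$ against the name $D_3'$ via Table \ref{E4NotationTable}.

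The main obstacle I anticipate is bookkeeping the Cartan-formula error terms in Nakamura's operations on a four-fold product, and correctly identifying which monomial in the $h_{ij}$'s the name $D_3' \Delta_1$ refers to — in other words, the hard part is not the conceptual mechanism but verifying that the surviving error term is precisely $h_1^7 D_3'\Delta_1$ and that all other potential targets in the same degree are already dead on $E_6$ for degree reasons (appealing, as in the displayed proof of the preceding proposition, to Theorem \ref{ExtAThm}'s constraint on the $h_1$-localized answer in that Milnor--Witt stem and Chow degree). I would close by remarking that this differential is forced anyway: $D_3'\Delta_1$ cannot survive to $E_\infty$ because $\Ext_A[h_1^{-1}]$ has no class in its degree, so \emph{some} differential must hit or originate from it, and the squaring computation merely identifies it as this particular $d_6$.
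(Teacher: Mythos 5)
Your main route does not go through, and the step you relegate to a closing remark is in fact the entire proof --- but you have left out its essential ingredient. On the squaring-operation route: $c_0 g\phi\Delta_1 = h_1^3\,\altn{h_0}\,b_{21}^3\,B\,b_{31}^2$ is not a square in $E_2[h_1^{-1}]$ (it contains $\altn{h_0}$, $B$, and an odd power of $b_{21}$ to the first power), so it is not $\Sq_0$ of anything, and you never produce a candidate $w$ with $\Sq_i(w)$ equal to it; unlike $\altn{P}=\Sq_0(\altn{b_{20}})$ and $\altn{\Delta}=\Sq_0(\altn{b_{30}})$ in Lemma \ref{lem:d4}, there is no clean Nakamura computation available here, and your plan openly defers the part that would constitute the proof. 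Your Leibniz attempt is also flawed as stated: Table \ref{d4MayTable} gives $d_4(\Delta_1)=g\altn{h_3}$ (i.e.\ $gh_5$), so $\Delta_1$ is \emph{not} a $d_4$-cycle, and the claim that ``the second term vanishes because $\Delta_1$ already died at $E_4$'' is not a justification --- you notice the subtlety but then abandon the computation rather than resolving it.

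The paper's proof is the elimination argument you only gesture at: $D_3'\Delta_1$ cannot survive to $E_\infty$ by Theorem \ref{ExtAThm}, it supports no differential (there is nothing for it to hit), so it must be killed; the only two differentials that could hit $h_1^7 D_3'\Delta_1$ are the claimed $d_6$ and $d_{10}(h_1^{-8}c_0 g^4\phi)$, and the latter is excluded because $d_{10}(g^4\phi)=h_1^{13}g y_{45}$ forces $c_0 g^4\phi$ to be a $d_{10}$-cycle. Your version of this argument --- ``some differential must hit or originate from it'' --- is missing both halves of the case analysis: you neither verify that $D_3'\Delta_1$ cannot be a source, nor enumerate and rule out the competing sources that could hit it. Without that enumeration (in particular, the $d_{10}$ candidate), the forcing argument does not pin down the $d_6$ you are asked to prove.
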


\begin{proof}
The class $D_3'\Delta_1$ cannot survive by Theorem~\ref{ExtAThm}. 
There are no classes for it to hit, and the only other differential that could possibly hit $h_1^7 D_3'\Delta_1$ is $d_{10}(h_1^{-8} c_0g^4\phi)$. But $d_{10}(g^4\phi) = h_1^{13}gy_{45}$, so $c_0g^4\phi$ is a $d_{10}$-cycle.
\end{proof}

\begin{lemma}
\label{lem:d8}
\mbox{}
\begin{enumerate}
\item
$d_8(\Delta_1\altn{P}) = h_1^{-6} c_0^2 e_0^2h_6$.
\item
$d_8(\altn{P^2})= \altn{h_0^8h_4}$.
\end{enumerate}
\end{lemma}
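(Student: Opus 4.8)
The plan is to treat the two parts by different methods. Part~(2) yields to the same Nakamura squaring-operation argument used for Lemma~\ref{lem:d4}, one transgression level higher, while part~(1) is the genuinely hard case and must be forced by the elimination strategy of Lemma~\ref{lem:d6} together with Theorem~\ref{ExtAThm}.

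For part~(2): since $\Sq_0$ is the top (squaring) operation -- as already used in the proof of Lemma~\ref{lem:d4}, where $\Sq_0(\mathbf{h_0^2}) = \mathbf{h_0^4}$ -- we have $\mathbf{P^2} = \Sq_0(\mathbf{P})$, and the same Kudo--Nakamura transgression identity as in Lemma~\ref{lem:d4}, now applied with source differential $d_4$ in place of $d_2$, gives
\[ d_8(\mathbf{P^2}) = d_8(\Sq_0 \mathbf{P}) = \Sq_1\bigl( d_4(\mathbf{P}) \bigr) = \Sq_1( \mathbf{h_0^4 h_3} ), \]
using Lemma~\ref{lem:d4}(1). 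Expanding by the Cartan formula and reusing the identities $\Sq_0(\mathbf{h_0^2}) = \mathbf{h_0^4}$, $\Sq_1(\mathbf{h_0^2}) = 0$ and $\Sq_1(\mathbf{h_3}) = \mathbf{h_4}$ from the proof of Lemma~\ref{lem:d4}, this becomes
\[ \Sq_1(\mathbf{h_0^4 h_3}) = \Sq_0(\mathbf{h_0^4})\, \Sq_1(\mathbf{h_3}) + \Sq_1(\mathbf{h_0^4})\, \Sq_0(\mathbf{h_3}) = \mathbf{h_0^8 h_4}, \]
the second summand vanishing because $\Sq_1(\mathbf{h_0^4}) = \Sq_1((\mathbf{h_0^2})^2) = 0$. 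One must also note that $\mathbf{P^2}$ is still alive on $E_8$: it is a $d_4$-cycle automatically (Leibniz in characteristic~$2$), it is not a $d_4$-boundary, and $d_5$, $d_6$, $d_7$ vanish on it for degree reasons. I expect this part to be routine.

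For part~(1): I would argue as in Lemma~\ref{lem:d6}. Both $\Delta_1\mathbf{P}$ and the asserted target $h_1^{-6} c_0^2 e_0^2 h_6$ have Chow degree~$4$ and lie in adjacent Milnor--Witt stems, and by Theorem~\ref{ExtAThm} the relevant bidegrees of $\Ext_A[h_1^{-1}]$ are completely known; one reads off from the tables that the surviving classes there are already exhausted by monomials in $h_1^{\pm 1}, v_1^4, v_2, v_3, \dots$ detected by previously understood permanent cycles -- for instance, in the Milnor--Witt stem of $\Delta_1\mathbf{P}$ the only Chow-degree-$4$ monomials are $h_1^4 v_3^3 v_5$, $h_1^4 v_2^2 v_4 v_5$ and $h_1^4 v_3 v_4^3$. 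Consequently neither $\Delta_1\mathbf{P}$ nor $h_1^{-6} c_0^2 e_0^2 h_6$ can survive to $E_\infty[h_1^{-1}]$ (note that $h_6$ has Chow degree~$0$, so $h_6$ and all of its multiples must die in any case, consistently with the assertion). Comparing with the charts of \cite{GI} and with the differentials already established, one then checks that $d_8(\Delta_1\mathbf{P})$ is the unique differential that can kill $h_1^{-6} c_0^2 e_0^2 h_6$ and, conversely, the only available differential supported by $\Delta_1\mathbf{P}$; this forces the stated equality. A tempting shortcut would be a Nakamura argument here as well -- the appearance of $h_6 = \mathbf{h_4}$ in the target has the flavor of an $\Sq_1$-shift, and if $\Delta_1\mathbf{P}$ is a square $\Sq_0(z)$ with $z$ surviving to $E_4$ then $d_8(\Delta_1\mathbf{P}) = \Sq_1(d_4 z)$ -- but the survival of $z$ to $E_4$ is not clear. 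The main obstacle for part~(1), on either route, is exactly this bookkeeping: showing that $\Delta_1\mathbf{P}$ itself reaches $E_8$ (in particular that $d_4(\Delta_1\mathbf{P})$, which by the Leibniz rule has $\Delta_1\mathbf{h_0^4 h_3}$ among its summands, vanishes on $E_4$), and then eliminating every competing differential into and out of the two classes on the pages $E_4$ through $E_8$, which is delicate because several other monomials occupy the same stems and their detecting May classes have to be pinned down first.
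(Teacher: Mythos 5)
Part (2) of your proposal is correct and is exactly the paper's argument: $\altn{P^2}=\Sq_0(\altn{P})$, Nakamura's transgression formula gives $d_8(\altn{P^2})=\Sq_1 d_4(\altn{P})=\Sq_1(\altn{h_0^4h_3})$, and the Cartan formula kills the $\Sq_1(\altn{h_0^4})$ term. Nothing to add there.

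Part (1) has a genuine gap, and it is precisely the bookkeeping you flag at the end but do not resolve. Your argument is ``source and target must both die, and $d_8$ is the unique matching,'' but the uniqueness claim is not justified and is not deducible from Theorem \ref{ExtAThm} plus degree counting alone: the bidegree $(t,c)=(52,4)$ contains several other $E_4$-classes (e.g.\ $\altn{P}\Delta_1^2$ and various $b_{21},b_{31}$-monomials times $\altn{P}$), and, more seriously, the target $c_0^2e_0^2h_6$ could a priori instead be killed by a \emph{longer} differential from a different source (compare the proof of Lemma \ref{lem:d10}, where the analogous class $c_0^2e_0h_6$ has two competing potential killers, $d_{10}(\Delta_1B_1)$ and $d_{14}(g^3B_1)$, and deciding between them requires a page of hidden-extension analysis). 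The missing idea is a multiplicative argument that bounds the page on which $c_0^2e_0^2h_6$ must die: the paper uses the relation $c_0^2\cdot\Delta_1\phi = h_1^{2}e_0\cdot\Delta_1B_1$ together with the permanence of $\Delta_1B_1$ (Lemma \ref{lem:d10}) to conclude that $c_0^2\Delta_1\phi$ is a permanent cycle; since $\Delta_1\phi$ itself cannot survive and is forced to satisfy $d_{10}(\Delta_1\phi)=h_1^3e_0^2h_6$, the vanishing of $d_{10}(c_0^2\Delta_1\phi)$ shows that $c_0^2e_0^2h_6$ is already zero on $E_{10}$. Only with that bound in hand does the elimination become unique, leaving $d_8(\Delta_1\altn{P})$ as the only possibility. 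Without some such argument (or an explicit verification from the charts that every competing differential of every length is excluded, including the $d_4$-cycle condition on $\Delta_1\altn{P}$ that you correctly identify as unverified), your part (1) does not go through.
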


\begin{proof}
Start with the relation $c_0^2 \cdot\Delta_1 \phi = h_1^{2} e_0 \cdot \Delta_1 B_1$.
We will show in Lemma \ref{lem:d10} that
$\Delta_1 B_1$ is a permanent cycle.  Therefore,
$c_0^2 \Delta_1 \phi$ is a permanent cycle.
However, $\Delta_1 \phi$ cannot survive by Theorem \ref{ExtAThm},
so we must have 
$d_{10}(\Delta_1\phi) = h_1^3 e_0^2 h_6$.
Since $d_{10}(c_0^2 \Delta_1 \phi)$ must be zero,
it follows that $c_0^2 e_0^2 h_6 $ must be zero
in $E_{10}[h_1^{-1}]$.
The only possibility is that
$d_8 (\Delta_1 \altn{P} ) = h_1^{-6} c_0^2 e_0^2 h_6$.
This establishes the first formula.

For the second formula,
we use Nakamura's squaring operations \cite{Nak} as in the proof of
Lemma \ref{lem:d4} to compute
\[ \begin{split}
d_8(\altn{P^2}) &=d_8(\Sq_0(\altn{P})) = \Sq_1 d_4(\altn{P}) = 
\Sq_1( \altn{h_0^4h_3}) \\
&= \altn{h_0^8h_4} + \Sq_1(\altn{h_0^4})\altn{h_3^2} = 
\altn{h_0^8h_4}.  
\end{split}\]
\end{proof}

\begin{lemma}
\label{lem:d10}
$d_{10}(\Delta_1 B_1) = 0$.
\end{lemma}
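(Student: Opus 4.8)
The plan is to show $d_{10}(\Delta_1 B_1) = 0$ by a combination of degree/bookkeeping arguments and the constraints imposed by Theorem~\ref{ExtAThm}. First I would locate the class $\Delta_1 B_1$ precisely in the $h_1$-localized May spectral sequence: using the degrees of $\Delta_1$ (from the $\Ext_{A(4)}$ remark, degree $(24,4,0)$) and of $B_1$, compute its Milnor-Witt stem, Adams filtration, Chow degree, and (residual) May filtration, so that the only a priori possible differentials on it are $d_r$ for a short list of $r$. The Chow degree of $B_1$ is positive, so $\Delta_1 B_1$ lies in positive Chow degree; by Theorem~\ref{ExtAThm} the $h_1$-localized $\Ext$ in each positive Chow degree is a finite-rank free $\F_2[h_1^{\pm 1}]$-module, and I would identify which monomials in $v_1^4, v_2, v_3, \dots$ occupy the relevant $(t,c)$-bidegree.

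Second, I would rule out the candidate targets of $d_{10}(\Delta_1 B_1)$ one at a time. The target sits in Milnor-Witt stem one lower, the same Chow degree, and Adams filtration one higher; I would enumerate the $E_{10}[h_1^{-1}]$-classes there (reading off the generators and relations in Tables~\ref{E2GenTable}--\ref{E2RelTable} together with the already-established differentials $d_2,d_4,d_6,d_8$ from the preceding lemmas and tables) and check that each either is already known to be a nonzero permanent cycle detecting one of the $v$-monomials, or has itself already been hit, or is excluded by the May filtration jump being exactly $10$. In particular I expect the only serious candidate to be some multiple of $c_0^2 e_0^2 h_6$ or a class closely tied to the $\Delta_1\phi$ differential of Lemma~\ref{lem:d8}; since Lemma~\ref{lem:d8} is proved \emph{assuming} $d_{10}(\Delta_1 B_1)=0$, I must be careful not to reason circularly, and instead argue directly that $\Delta_1 B_1$ is a cycle because $B_1$ and $\Delta_1$ are each permanent cycles in the relevant range. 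Here I would invoke that $B_1$ is a permanent cycle in the motivic May spectral sequence (it detects the $h_1$-local class $h_1^5 B_1 = e_0^3 + d_0 e_0 g$ discussed in the introduction, equivalently the monomial $h_1^9(v_3^3 + v_2^2 v_4)$), and that $\Delta_1$ — coming from $\Ext_{A(4)}$, where the localized answer is a polynomial algebra with $\Delta_1$ among the generators — survives far enough; multiplicativity of May differentials then forces $d_{10}(\Delta_1 B_1)$ to be a sum of $\Delta_1 \cdot d_{10}(B_1)$ and $d_{10}(\Delta_1)\cdot B_1$, both of which vanish in this range.

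The main obstacle will be making the non-circular argument airtight: the surrounding lemmas (especially Lemma~\ref{lem:d8} and the $D_3'\Delta_1$ lemma) are deduced \emph{from} this vanishing, so the proof of Lemma~\ref{lem:d10} cannot use them. I would therefore anchor the argument on the independent facts that (i) $B_1$ survives the May spectral sequence, proved classically and transported motivically via the Chow-degree-zero and $\tau$-localization comparisons, and (ii) $\Delta_1$ survives through the Milnor-Witt 66-stem, which follows from the $\Ext_{A(4)}[h_1^{-1}]$ computation where it is a polynomial generator together with the comparison map $\Ext_A[h_1^{-1}] \to \Ext_{A(4)}[h_1^{-1}]$; then $d_{10}(\Delta_1 B_1) = \Delta_1 d_{10}(B_1) + d_{10}(\Delta_1) B_1 = 0$. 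The residual work is the careful degree accounting to confirm no $d_r$ for $r < 10$ already killed or created $\Delta_1 B_1$, which is routine given the tables but needs to be stated.
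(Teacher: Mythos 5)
Your proposed argument has a genuine gap at its central step. The Leibniz-rule computation $d_{10}(\Delta_1 B_1) = \Delta_1\, d_{10}(B_1) + d_{10}(\Delta_1)\, B_1$ is not available, because $\Delta_1$ is \emph{not} a permanent cycle in the $h_1$-localized May spectral sequence for $A$: Table~\ref{d4MayTable} records $d_4(\Delta_1) = g h_5$, so $\Delta_1$ dies already at $E_4$ and does not exist as a class on $E_{10}$. The fact that $\Delta_1$ is a polynomial generator of $\Ext_{A(4)}[h_1^{-1}]$ does not help, since the restriction map $A \to A(4)$ kills $h_5$; survival in the May spectral sequence for $A(n)$ never implies survival in the May spectral sequence for $A$ (the comparison map points the wrong way for that inference). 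The product $\Delta_1 B_1$ survives to $E_{10}$ only as a class that happens to admit that name at $E_4$, and its $d_{10}$ cannot be computed factorwise. (Your appeal to $B_1$ being a permanent cycle is fine in substance, though the ``Chow degree zero'' transport does not apply to it, since $B_1$ has Chow degree $3$.)

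Once the Leibniz shortcut is unavailable, your first-paragraph strategy of enumerating targets runs directly into the class $h_1 c_0^2 e_0 h_6$, which is a genuine nonzero element of $E_{10}[h_1^{-1}]$ in exactly the receiving degree, and which \emph{must} eventually die (since $d_{16}(e_0 g^4) = h_1^{16} e_0 h_6$ while $c_0^2 \cdot e_0 g^4 = h_1^2 e_0^2 \cdot e_0 g^3$ is a permanent cycle). The only two differentials that can kill it are $d_{10}(\Delta_1 B_1)$ and $d_{14}(g^3 B_1)$, so degree bookkeeping alone cannot decide the lemma. The paper resolves the ambiguity by a different mechanism you do not supply: using the algebraic independence of $v_2, v_3, v_4, v_5$ from Theorem~\ref{ExtAThm} to force the hidden relations $h_1^2 e_0 (e_0 g)^2 + c_0^2 \cdot e_0 g^3 = h_1^7 g^2 B_1$ and $e_0^2 \cdot e_0 g^3 + (e_0 g)^3 = x$ with $x \in \{h_1^{13}\Delta_1 B_1,\ h_1^5 g^3 B_1\}$, then multiplying the first by $e_0^2$ and the second by $c_0^2$ and adding to show $x$ cannot be $h_1^5 g^3 B_1$. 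Without some argument of this kind that discriminates between the two candidate differentials, your proof cannot be completed.
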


\begin{proof}
We know that $d_{16}(e_0g^4) = h_1^{16} e_0h_6$, yet 
$c_0^2\cdot e_0g^4 = h_1^{2} e_0^2 \cdot e_0g^3$ is a permanent cycle. 
So $c_0^2e_0 h_6$ must be zero in $E_{16}[h_1^{-1}]$. 
The only possibilities are that 
$d_{10}(\Delta_1B_1) = h_1 c_0^2e_0h_6$
or $d_{14}(g^3B_1) = h_1^{9} c_0^2e_0 h_6$.

In the notation of Theorem \ref{ExtAThm},
$c_0$, $e_0$, $e_0 g$ and $e_0 g^3$ correspond to 
$h_1^2v_2$, $h_1^3 v_3$, $h_1^7 v_4$, and $h_1^{15} v_5$ respectively.
Since these elements are algebraically independent, we conclude that
the relation 
$ h_1^2e_0 (e_0 g)^2 + c_0^2 \cdot e_0 g^3 = 0$
in $E_\infty[h_1^{-1}]$ must be resolved by
\[
h_1^2 e_0 (e_0 g)^2 + c_0^2 \cdot e_0 g^3 = h_1^7 g^2 B_1
\]
in $\Ext_A[h_1^{-1}]$.

Similarly, the relation
$e_0^2 \cdot e_0g^3 + (e_0g)^3 = 0$ in $E_\infty[h_1^{-1}]$
must be resolved in $\Ext_A[h_1^{-1}]$ by 
\[
e_0^2 \cdot e_0g^3 + (e_0g)^3 = x,
\]
where $x$ is either $h_1^{13} \Delta_1 B_1$ or $h_1^5 g^3 B_1$.
Multiply the first hidden relation by $e_0^2$,
multiply the second hidden relation by $c_0^2$, and add to obtain
\[
h_1^2 e_0^3 (e_0 g)^2 + c_0^2 (e_0 g)^3 = h_1^7 e_0^2 \cdot g^2 B_1 + c_0^2 x.
\]

Again using Theorem \ref{ExtAThm},
the left side of this last relation is not zero, so the right side is also
not zero.
This implies that $x$ cannot be $h_1^5 g^3 B_1$, since
$h_1^7 e_0^2 \cdot g^2 B_1$ equals $h_1^5 c_0^2\cdot g^3 B_1$ in the 
May spectral sequence with no possible hidden extension.

Therefore, $x$ must be $h_1^{13} \Delta_1 B_1$, and
$\Delta_1 B_1$ must survive the May spectral sequence.
\end{proof}

\subsection{$E_\infty[h_1^{-1}]$ and $\Ext_A[h_1^{-1}]$}

The May differentials given in Section \ref{subsctn:May-diff} allow us to
compute $E_\infty[h_1^{-1}]$ explicitly through the Milnor-Witt 66-stem.
See \cite{GI} for a chart of this calculation.

The final step is to pass from $E_\infty[h_1^{-1}]$ to $\Ext_A[h_1^{-1}]$
by resolving hidden extensions. 

\begin{prop}
\label{prop:HiddenRels}
Table \ref{ExtRelTable} lists some relations in $\Ext_A$
that are hidden in $E_\infty[h_1^{-1}]$.
Through the Milnor-Witt 66-stem,
all other hidden relations are multiplicative consequences of these relations.
\end{prop}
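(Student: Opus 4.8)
The plan is to deduce the list of hidden relations in Table \ref{ExtRelTable} by systematically comparing the $E_\infty[h_1^{-1}]$-page, which we have computed through the Milnor-Witt 66-stem in Section \ref{subsctn:May-diff}, with the known answer $\Ext_A[h_1^{-1}] \iso \F_2[h_1^\pm, v_1^4, v_2, v_3, \dots]$ from Theorem \ref{ExtAThm}. First I would fix, in each Milnor-Witt stem and Chow degree through the range, a monomial basis for $\Ext_A[h_1^{-1}]$ in the $v_i$'s and $h_1$, and the associated-graded basis coming from $E_\infty[h_1^{-1}]$; the two have the same Poincar\'e series, so the comparison is purely a matter of matching May-filtration data. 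A relation is hidden precisely when a product of $E_\infty$-classes that the May spectral sequence records as zero (because of the multiplicative structure of $E_\infty$) must in fact equal a class of strictly higher May filtration in $\Ext_A[h_1^{-1}]$; the target is then pinned down by the requirement that the result be a specified monomial in the $v_i$'s, together with the degree (Milnor-Witt stem, Adams filtration, Chow degree) and the May filtration jump.

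The key identifications I would carry out, in increasing order of Milnor-Witt stem, are the low-degree anchor cases that are already isolated in the introduction and in Section \ref{subsctn:May-diff}: the relation $e_0^3 + d_0 \cdot e_0 g = h_1^5 B_1$ (equivalently $h_1^9 v_3^3 + h_1^9 v_2^2 v_4$ being nonzero), and then its higher analogues $h_1^2 e_0(e_0 g)^2 + c_0^2 \cdot e_0 g^3 = h_1^7 g^2 B_1$ and $e_0^2 \cdot e_0 g^3 + (e_0 g)^3 = h_1^{13} \Delta_1 B_1$, which were already forced during the proof of Lemma \ref{lem:d10}. Each of these is obtained the same way: write the vanishing $E_\infty$-relation in terms of the $v_i$'s via the dictionary $c_0 \leftrightarrow h_1^2 v_2$, $e_0 \leftrightarrow h_1^3 v_3$, $e_0 g \leftrightarrow h_1^7 v_4$, $e_0 g^3 \leftrightarrow h_1^{15} v_5$, observe by Theorem \ref{ExtAThm} that the left side is a nonzero monomial, and then locate the unique class of higher May filtration in that tridegree that can serve as the value. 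I would also record the multiplicative closure: any product of a listed hidden relation with a permanent cycle yields a hidden relation, and the claim ``all other hidden relations are multiplicative consequences'' is verified by checking, stem by stem, that once the listed relations are imposed the associated graded of the filtered algebra $\Ext_A[h_1^{-1}]$ agrees with $E_\infty[h_1^{-1}]$ with no residual discrepancy.

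The main obstacle is the bookkeeping in the stems where $E_\infty[h_1^{-1}]$ has several classes in the same tridegree but different May filtrations, so that a hidden relation does not have an obviously unique target; there one must use auxiliary relations, as in Lemma \ref{lem:d10}, where the ambiguity between $h_1^5 g^3 B_1$ and $h_1^{13}\Delta_1 B_1$ was resolved only by multiplying two candidate relations together and invoking algebraic independence of the $v_i$'s. I expect every such ambiguous case in the range to be resolvable by exactly this technique — multiply known or candidate hidden relations, reduce in $\F_2[h_1^\pm, v_1^4, v_2, \dots]$, and use that a nonzero left-hand side forces a nonzero right-hand side of the correct May filtration — so the proof is a finite, if somewhat lengthy, verification rather than anything requiring a new idea. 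Since the bulk of it is the tabulation itself, I would present the argument by stating the method, working the representative cases in full, and referring to Table \ref{ExtRelTable} and the chart in \cite{GI} for the complete list.
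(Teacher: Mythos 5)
Your proposal is correct and takes essentially the same approach as the paper: the published proof simply observes that the $g^2B_1$ and $\Delta_1 B_1$ relations were already forced in the proof of Lemma \ref{lem:d10} and that the remaining entries of Table \ref{ExtRelTable} follow "similarly," which is exactly the method you describe — translate the vanishing $E_\infty$-products into the $v_i$-basis of Theorem \ref{ExtAThm}, observe the left side is a nonzero monomial, identify the unique admissible higher-filtration target, and resolve ambiguities by multiplying candidate relations and invoking algebraic independence of the $v_i$. Your added remarks on Poincar\'e-series matching and the stem-by-stem completeness check make explicit what the paper leaves implicit, but introduce no new ingredient.
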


\begin{pf}
Arguments for the relations involving $g^2B_1$ and $\Delta_1 B_1$ were given already in the proof of Lemma~\ref{lem:d10}. The other relations in Table~\ref{ExtRelTable} are established similarly. 
\end{pf}

Finally, we have calculated $\Ext_A[h_1^{-1}]$ through the
Milnor-Witt 66-stem with the May spectral sequence and obtained the same
answer as in Theorem \ref{ExtAThm}.
Multiplicative generators for $\Ext_A[h_1^{-1}]$ through the
Milnor-Witt 66-stem are given in Table \ref{ExtGenTable}.

\subsection{The $h_1$-local May spectral sequence for $A(2)$}\label{sec:A2MayE1}

We sketch here the calculation of the $h_1$-localized
May spectral sequence over $A(2)$. 
The
$E_1$-term is a polynomial algebra on the generators $h_0$, $h_1$, $h_2$, $h_{20}$, $h_{21}$, and $h_{30}$.

Then $d_1(h_{20}) = h_0 h_1$ and
$d_1(h_{21}) = h_1 h_2$.
As in Section \ref{sec:AMayE1}, we replace $h_{30}$ by $h_{30}' = h_1^{-1} h_0(1) 
= h_{30} + h_1^{-1} h_{20} h_{21}$,
so that $d_1 (h_{30}') = 0$.

The $E_2$-page is then the 
polynomial algebra $\bM_2 [ h_1^{\pm 1}, b_{20}, b_{21}, h_{30}' ]$, and the only differential is
$d_2 ( b_{20} ) = \tau h_1^3$.

It follows that $E_3$ is given by $\F_2 [ h_1^{\pm 1}, b_{20}^2, b_{21}, h_{30}' ]$. No more differentials are possible, and $E_3=E_\infty$.

Note that $b_{20}^2$, $b_{21}$, and $h_{30}'$ correspond
respectively to $v_1^4$, $h_1^{-1} a_1$, and $v_2$ 
in the notation of Proposition \ref{CohomBi(2)}.

\section{The localization map}\label{sec:LocMap}

The calculation of $\Ext_A$ is given in \cite{Istems} up to the $70$-stem. 
In this section, we will use the May spectral sequence analysis of $\Ext_A[h_1^{-1}]$
from Section \ref{sec:MaySS} to determine the localization map
\[ L:\Ext_A \rtarr \Ext_A [h_1^{-1}] \]
in the same range.
A detailed understanding of the localization map is essential for transfer of
information between the localized and non-localized situations.

\begin{prop}\ 
Table~\ref{LocztnTable} lists some values of the localization map
$L: \Ext_A \rightarrow \Ext_A[h_1^{-1}]$ on multiplicative generators of
$\Ext_A$.  Through the 70-stem, 
the localization map is zero on all generators of $\Ext_A$ not listed
in Table~\ref{LocztnTable}.
\end{prop}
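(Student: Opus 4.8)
The plan is to compute the localization map $L \colon \Ext_A \rtarr \Ext_A[h_1^{-1}]$ by playing off the two different descriptions of $\Ext_A[h_1^{-1}]$ developed in the excerpt against the known calculation of $\Ext_A$ through the $70$-stem from \cite{Istems}. Concretely, the idea is that an element $x$ of $\Ext_A$ is detected in the motivic May spectral sequence by some element $\bar{x}$ of $E_\infty$, and $L(x)$ is detected in the $h_1$-localized motivic May spectral sequence by the image of $\bar{x}$ under the localization map $E_\infty \rtarr E_\infty[h_1^{-1}]$, which we understand via the splitting of Proposition~\ref{prop:E1-split}, the identification of $G_1$ with the classical May $E_1$-page in Proposition~\ref{prop:cl-G1}, and the explicit $h_1$-local May differentials of Section~\ref{subsctn:May-diff}. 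So the first step is purely formal: set up the compatibility between the (non-localized) motivic May spectral sequence and its $h_1$-localization, noting that localization is exact so the $h_1$-local May $E_r$-pages are obtained by inverting $h_1$ on each page, and the localization map is induced page-by-page.

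Second, I would go through the multiplicative generators of $\Ext_A$ in the range in question, as listed in \cite{Istems}, and for each one compute the image of its May-theoretic detecting class after inverting $h_1$. Many generators are immediately seen to die: if a generator $x$ of $\Ext_A$ has a detecting class $\bar{x} \in E_\infty$ that becomes a boundary, or becomes $h_1$-torsion, or lies in a May grading that is not hit by the surviving generators $h_1$, $v_1^4$, $v_n$ of $E_\infty[h_1^{-1}]$, then $L(x) = 0$ up to higher May filtration, and a short filtration argument (using that $\Ext_A[h_1^{-1}]$ is concentrated in the bidegrees dictated by Theorem~\ref{ExtAThm}) upgrades this to $L(x) = 0$ exactly. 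For the generators that do survive — those mapping into the polynomial algebra $\F_2[h_1^{\pm 1}, v_1^4, v_2, v_3, \dots]$ — I would read off the image from the correspondences already recorded in the excerpt (for instance $Ph_1 \mapsto h_1 v_1^4$, $c_0 \mapsto h_1^2 v_2$, $d_0 \mapsto h_1^2 v_2^2$, $e_0 \mapsto h_1^3 v_3$, $e_0 g \mapsto h_1^7 v_4$, $B_1 \mapsto$ the expression forced by the hidden relation $e_0^3 + d_0 e_0 g = h_1^5 B_1$, and so on), using the hidden relations of Proposition~\ref{prop:HiddenRels} to pin down the images of the less obvious classes like $g^2 B_1$ and $\Delta_1 B_1$. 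The output of this bookkeeping is exactly Table~\ref{LocztnTable}.

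Third, I would confirm that the two lists agree: every generator not appearing in Table~\ref{LocztnTable} has been shown to map to zero, and every entry in the table has been justified. The main obstacle is not any single hard argument but the sheer bookkeeping over a large range, together with a handful of genuinely subtle classes — the $B_1$-family and its $g$- and $\Delta_1$-multiples — whose localizations are only accessible through the hidden May extensions established in the proofs of Lemma~\ref{lem:d10} and Proposition~\ref{prop:HiddenRels}. One has to be careful that the detecting-class argument is only valid modulo higher May filtration, so in borderline cases (where a generator's image could a priori be a sum of a visible term and a higher-filtration correction) one must invoke the rigidity of $\Ext_A[h_1^{-1}]$ from Theorem~\ref{ExtAThm} to rule out the corrections; since that target is a polynomial algebra with very sparse bidegrees, this rigidity does almost all the work. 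I expect the verification that ``all other generators map to zero'' to be the part requiring the most case-by-case care, precisely because it is a statement about \emph{every} generator in the range rather than a single computation.
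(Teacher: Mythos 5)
Your proposal is correct and follows essentially the same route as the paper: compare the unlocalized and $h_1$-localized May spectral sequences, track the detecting classes of the generators of $\Ext_A$, use the rigidity of $\Ext_A[h_1^{-1}]$ from Theorem~\ref{ExtAThm} to settle borderline cases, and use the hidden relation $h_1^7 B_1 = h_1^2 e_0^3 + c_0^2\cdot e_0 g$ to pin down $L(B_1)$ and its relatives. The only difference is cosmetic: the paper dispatches most of the vanishing in one stroke by observing that $\Ext_A[h_1^{-1}]$ is concentrated in degrees with $t-c$ even, whereas you reach the same conclusions by degree reasons applied generator by generator.
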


\begin{pf}
Note that $\Ext_A[h_1\inv]$ is concentrated in degrees $(t,f,c)$ such that 
$t-c$ is even.
Many of the generators of $\Ext_A$ are in degrees $(t,f,c)$ such that $t-c$ 
is odd.  Therefore, all of these generators must map to $0$ in the localization.

The values of $L$ on $u$, $v$, $u'$, $v'$, and $U$ follow from applying the May $E_4$ relation $h_1^4\Delta = d_0^2 + Pg$ to the May descriptions of these classes. 

The remaining values are again determined by their May descriptions, together with the value of $L(B_1)$, which follows from the relation $h_1^7B_1 = h_1^2e_0^3 + c_0^2 \cdot e_0g$ established in Proposition~\ref{prop:HiddenRels}.
\end{pf}

Table \ref{LocztnTable} gives values for the localization map in two forms.
First, it uses the notation from Theorem \ref{ExtAThm} involving the
elements $v_n$.  Second, it uses a different notation for the generators
of $\Ext_A[h_1^{-1}]$ given in Table \ref{ExtGenTable} that is more compatible
with the standard notation for $\Ext_A$.

With a detailed understanding of the localization map in hand, we can establish
some hidden relations in $\Ext_A$ that are needed in \cite{Istems}.

\begin{cor} The following hidden extensions hold in $\Ext_A$:
\begin{enumerate}
\item $e_0^3 + d_0 \cdot e_0g = h_1^5B_1 $.
\item $d_0 v + e_0 u = h_1^3 x'$.
\item $e_0 u' + d_0 v' = h_1^2 c_0 x'$.
\end{enumerate}
\end{cor}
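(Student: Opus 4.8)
The plan is to derive each of the three hidden extensions from the detailed knowledge of the localization map $L:\Ext_A\rtarr\Ext_A[h_1^{-1}]$ assembled in this section, together with the hidden relations in $\Ext_A[h_1^{-1}]$ coming from Proposition~\ref{prop:HiddenRels}. The overall strategy for each relation is the same: one identifies an expression in $\Ext_A$ which, in some low Adams filtration, must be divisible by a power of $h_1$ (because after applying $L$ it lies in the image of multiplication by that power of $h_1$), and then one uses the injectivity of $L$ in the relevant degree, or a degree count, to pin down the unique possible correction term.

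For part (1), I would start from the hidden relation $h_1^7 B_1 = h_1^2 e_0^3 + c_0^2\cdot e_0 g$ in $\Ext_A[h_1^{-1}]$ established in Proposition~\ref{prop:HiddenRels} (equivalently the relation involving $B_1$ used to compute $L(B_1)$ in the preceding proposition). The point is that under $L$ the classes $e_0$, $e_0 g$, $c_0$, $B_1$ have known images — in particular $L(c_0) = h_1^2 \mathbf{h_0}$ so $c_0^2\cdot e_0 g$ and $h_1^2 e_0^3$ both land in the image of $h_1^2$ — and the combination $e_0^3 + d_0\cdot e_0 g$ maps into the image of $h_1^5$. Since $L$ is injective on the relevant Adams filtration in the 46-stem (there is no $h_1$-torsion there outside what $B_1$ accounts for), the correction term is forced to be $h_1^5 B_1$: indeed $L(h_1^5 B_1)$ equals $L(e_0^3 + d_0\cdot e_0 g)$ by the localized relation, and $B_1$ is the unique generator in that degree. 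This recovers the relation already announced in the introduction.

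For parts (2) and (3), I would run the analogous argument in the degrees of $d_0 v + e_0 u$ and $e_0 u' + d_0 v'$. Here the key inputs are the computed values $L(u)$, $L(v)$, $L(u')$, $L(v')$ — which the preceding proposition obtained by feeding the May $E_4$ relation $h_1^4\Delta = d_0^2 + Pg$ into the May descriptions of these classes — together with $L(d_0)$, $L(e_0)$, and $L(x')$. One checks that $d_0 v + e_0 u$ maps under $L$ into the image of $h_1^3$ and that $x'$ is the unique generator in the resulting degree, so $d_0 v + e_0 u = h_1^3 x'$ up to elements in higher filtration which a degree count rules out; similarly $e_0 u' + d_0 v'$ maps into the image of $h_1^2 c_0$, forcing $e_0 u' + d_0 v' = h_1^2 c_0 x'$. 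In each case the verification that $L$ of the two sides agree is a direct substitution from Table~\ref{LocztnTable}.

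The main obstacle is not the algebra of substituting values of $L$, which is routine, but rather justifying that $L$ is injective (or that the indeterminacy is trivial) in exactly the tridegrees where these three relations live — i.e.\ ruling out that the true correction term differs from $h_1^5 B_1$, $h_1^3 x'$, $h_1^2 c_0 x'$ by a class killed by $L$. This is where one must invoke the explicit knowledge of $\Ext_A$ from \cite{Istems} through the 70-stem: in each of the three relevant stems one lists the $h_1$-torsion classes of the appropriate filtration and checks, degree by degree, that none of them can contribute, so that the candidate correction term is the only possibility. Once that bookkeeping is done, the three hidden extensions follow immediately.
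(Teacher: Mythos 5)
Your overall strategy --- compute $L$ of each expression from Table~\ref{LocztnTable}, observe that the image is non-zero, and then use the explicit knowledge of $\Ext_A$ from \cite{Istems} to pin down the unique candidate --- is exactly what the paper does for parts (1) and (2). (The paper phrases the conclusion as ``the expression is non-zero in $\Ext_A$, and the only possibility in that tridegree is the stated element,'' rather than as injectivity of $L$; that is a cosmetic difference, since both formulations lean on the same inspection of $\Ext_A$ in the relevant tridegree. Likewise, deriving the equality of $L$-images for (1) from the hidden relation $h_1^7B_1 = h_1^2e_0^3 + c_0^2\cdot e_0g$ of Proposition~\ref{prop:HiddenRels} rather than reading it off the table is the same computation in different clothing.)

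The genuine gap is in part (3). You assert that in each of the three tridegrees one can list the $h_1$-torsion classes and check ``that none of them can contribute, so that the candidate correction term is the only possibility.'' That check succeeds for (1) and (2) but fails for (3): in the degree $(s,f,w)=(63,15,35)$ of $e_0u'+d_0v'$ there are several non-zero classes consistent with the computed value of $L$, so the localization map alone does not determine the answer, and no degree count rules out the ambiguity. The missing idea is that $u'$ and $v'$ are both annihilated by $\tau$, hence so is $e_0u'+d_0v'$, and among the possible non-zero values only $h_1^2c_0x'$ is $\tau$-torsion. Without this extra input your argument for (3) does not close.
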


\begin{pf}
Table \ref{LocztnTable} says that 
$L( e_0^3 + d_0 \cdot e_0g)$ equals $h_1^9 v_3^3 + h_1^9 v_2^2 v_4$, which is non-zero.
It follows that $e_0^3+d_0 \cdot e_0g$ is non-zero in $\Ext_A$. 
From the calculation in \cite{Istems}, 
the only possibility is that it equals $h_1^5 B_1$.
This establishes the first formula.

The argument for the second formula is similar.
Table \ref{LocztnTable} says that 
$L(d_0 v + e_0 u )$ equals
$h_1^6 v_1^4 v_2^2 v_4 + h_1^6 v_1^4 v_3^3$, which is non-zero.
It follows that 
$d_0 v + e_0 u$ is non-zero in $\Ext_A$, and the only possibility is that
it equals $h_1^3 x'$.

For the third formula,
Table \ref{LocztnTable} says that
$L(e_0 u' + d_0 v')$ equals
$h_1^7 v_1^4 v_2 v_3^3 + h_1^7 v_1^4 v_2^3 v_4$, which is non-zero.
It follows that 
$e_0 u' + d_0 v'$ is non-zero in $\Ext_A$.
There are several possible non-zero values for $e_0 u' + d_0 v'$.
However,
$e_0 u' + d_0 v'$ must be annihilated by $\tau$ because both
$u'$ and $v'$ are.
Then  
$h_1^2 c_0 x'$ is the only possible value.
\end{pf}

\subsection{The localization map for $A(2)$}

For completeness, we also describe the localization map 
\[ \Ext_{A(2)}\rtarr \Ext_{A(2)}[h_1^{-1}].
 \]
The calculation of $\Ext_{A(2)}$ is given in \cite{IMotA(2)}.

\begin{prop}
Table \ref{LocztnTable2-A(2)} lists some values of the localization map
$L: \Ext_{A(2)} \rtarr \Ext_{A(2)}[h_1^{-1}]$ on multiplicative generators
of $\Ext_{A(2)}$.  The localization map is zero on all
generators of $\Ext_{A(2)}$ not listed in Table \ref{LocztnTable2-A(2)}.
\end{prop}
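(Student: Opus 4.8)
The final statement asserts that the localization map $L: \Ext_{A(2)} \rtarr \Ext_{A(2)}[h_1^{-1}]$ is recorded (on generators) in Table~\ref{LocztnTable2-A(2)} and vanishes on all other multiplicative generators of $\Ext_{A(2)}$. The plan is to mimic, in the simpler $A(2)$ setting, the argument already given for the $A$ case. First I would recall the explicit description of $\Ext_{A(2)}$ from \cite{IMotA(2)}: its multiplicative generators, their $(t,f,c)$-tridegrees, and the relations among them. Second, I would invoke Proposition~\ref{CohomBi(2)} and the $h_1$-local May computation of Section~\ref{sec:A2MayE1}, which identify $\Ext_{A(2)}[h_1^{-1}]$ with $\F_2[h_1^{\pm 1}, a_1, v_1^4, v_2]$ and pin down the correspondence $b_{20}^2 \leftrightarrow v_1^4$, $b_{21} \leftrightarrow h_1^{-1}a_1$, $h_{30}' \leftrightarrow v_2$, together with the dictionary $a_1 \leftrightarrow u$, $v_1^4 \leftrightarrow P$, $h_1^2 v_2 \leftrightarrow c_0$ from the remark following Proposition~\ref{CohomBi(2)}.

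The heart of the argument is a degree count. As in the proof for $A$, note that $\Ext_{A(2)}[h_1^{-1}]$ is concentrated in tridegrees $(t,f,c)$ with $t-c$ even (this is visible from the generators $h_1$, $a_1$, $v_1^4$, $v_2$, all of which satisfy $t\equiv c \pmod 2$). Hence every multiplicative generator of $\Ext_{A(2)}$ whose tridegree has $t-c$ odd must localize to zero automatically. Cross-referencing the generator list of $\Ext_{A(2)}$ against this parity constraint disposes of most generators at once. For the surviving even-parity generators, I would determine $L$ directly from the May-theoretic description of each class: a generator of $\Ext_{A(2)}$ that is represented in the (non-localized) May spectral sequence by a monomial in $h_0$, $h_1$, $h_2$, $h_{20}$, $h_{21}$, $h_{30}$ has localization computed by substituting the $h_1$-local May generators and using $d_1(h_{20}) = h_0 h_1$, $d_1(h_{21}) = h_1 h_2$, the identification $h_{30} = h_{30}' + h_1^{-1} h_{20} h_{21}$, and $d_2(b_{20}) = \tau h_1^3$, together with the fact that $h_0$ and $h_2$ are killed by $h_1$-localization. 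This is exactly the mechanism used for the $A$ case in the proof of the corresponding proposition in Section~\ref{sec:LocMap}; the $A(2)$ version is strictly easier because $E_\infty[h_1^{-1}]$ for $A(2)$ is already a clean polynomial algebra with no hidden relations to resolve.

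The one subtlety — and the step I expect to be the main obstacle — is that a few generators of $\Ext_{A(2)}$ may not be represented by literal May monomials but only up to May-filtration-lowering corrections, so their images under $L$ must be fixed by an indirect argument rather than by naive substitution. In the $A$ case this was handled by leveraging the hidden relation $h_1^7 B_1 = h_1^2 e_0^3 + c_0^2 \cdot e_0 g$; the analogue here would be to use the $A(2)$ relations (e.g.\ those involving $d_0$, $e_0$, and their products, which are the images of $d_0^2 = Pg$-type relations modulo $h_1$) to force the value of $L$ on each ambiguous generator by algebraic independence of $a_1$, $v_1^4$, $v_2$ in $\Ext_{A(2)}[h_1^{-1}]$. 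Concretely: if $L(x)$ lies in a given tridegree, list all monomials in $\F_2[h_1^{\pm 1}, a_1, v_1^4, v_2]$ of that tridegree, then use one relation in which $x$ appears (whose other terms have known localizations) to solve for $L(x)$. Since $\Ext_{A(2)}$ is small and fully known through all degrees, this finite check terminates, and comparing the resulting values with Table~\ref{LocztnTable2-A(2)} completes the proof. I would then simply remark that all generators not appearing in the table have been shown to localize to zero, either by the parity obstruction or by the explicit May computation.
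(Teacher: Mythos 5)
Your proposal is correct and follows essentially the same route as the paper: the paper's proof simply cites the generator list of $\Ext_{A(2)}$ from \cite[Table 7]{IMotA(2)} and determines the values of $L$ by comparing the localized and unlocalized May spectral sequences for $A(2)$, which is exactly the mechanism you describe (your added parity observation and the fallback via relations are reasonable elaborations of that comparison, not a different method).
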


\begin{proof}
The generators for $\Ext_{A(2)}$ are given in \cite[Table 7]{IMotA(2)}.
The values of $L$ follow by comparison of the localized and unlocalized
May spectral sequences for $A(2)$.
\end{proof}

Now consider the diagram
\[ \xymatrix{
\Ext_{A} \ar[r] \ar[d] & \Ext_{A}[h_1^{-1}] \ar[d] \\
\Ext_{A(2)} \ar[r] & \Ext_{A(2)}[h_1^{-1}]
} \]
in which the horizontal maps are localizations and the vertical maps are
induced by the inclusion $A(2) \rtarr A$.
Given that $\Ext_A[h_1^{-1}]$ and $\Ext_{A(2)}[h_1^{-1}]$
are computed explicitly in Theorem \ref{ExtAThm} and 
Proposition \ref{CohomBi(2)}, one might expect that the map
$\Ext_A [h_1^{-1}] \rtarr \Ext_{A(2)} [h_1^{-1}]$
would be easy to determine.  
The obvious guess is that this map takes $v_1^4$ to $v_1^4$,
takes $v_2$ to $v_2$, and takes $v_n$ to $0$ for $n \geq 3$.
However, the Cartan-Eilenberg spectral
sequences of Section \ref{sec:CohomA}
hide some of the values of this map.

\begin{lemma}
\label{lem:restriction}
The map $\Ext_A[h_1^{-1}] \rtarr \Ext_{A(2)} [h_1^{-1}]$
takes $v_1^4$, $v_2$, $v_3$, $v_4$, $v_5$, and $v_6$ to
$v_1^4$, $v_2$, $h_1^{-3} a_1 v_2$, 
$h_1^{-9} a_1^3 v_2$, $h_1^{-21} a_1^7 v_2$, and
$h_1^{-45} a_1^{15} v_2$.
\end{lemma}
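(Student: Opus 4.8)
The plan is to compare the inductive Cartan–Eilenberg computations of $\Ext_A[h_1^{-1}]$ (Proposition \ref{MainExtProp}) and of $\Ext_{A(2)}[h_1^{-1}]$ (Proposition \ref{CohomBi(2)}), keeping careful track of how the generators $v_n$ are represented. The subtlety flagged in the paragraph preceding the lemma is that, although $v_n$ for $n \ge 3$ "ought to" die under the quotient $A_{*,*} \rtarr A(2)_{*,*}$, the Cartan–Eilenberg filtrations on the two sides do not match: a class represented by $v_n$ on the $E_2$-page of the $A$-side spectral sequence can have image on the $A(2)$-side represented by a filtration-shifted class, namely a power of $a_1$ times $v_2$. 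So the strategy is: first pin down $v_3$ directly from an explicit cobar representative, then bootstrap to $v_4, v_5, v_6$ by a squaring/multiplicativity argument.

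First I would record degrees. In the $(t,f,c)$-grading, $v_n$ has degree $(2^n-1, 1, 1)$, $a_1$ has degree $(4,3,0)$, and $v_2$ has degree $(3,1,1)$; so $h_1^{-3} a_1 v_2$ has degree $(0,3,0)+(4,3,0)+(3,1,1) = (7,1,1) = (2^3-1,1,1)$, which confirms that the proposed image of $v_3$ is at least degree-consistent. Similarly one checks $h_1^{-9} a_1^3 v_2$ lies in degree $(2^4-1,1,1)$, $h_1^{-21} a_1^7 v_2$ in degree $(2^5-1,1,1)$, and $h_1^{-45} a_1^{15} v_2$ in degree $(2^6-1,1,1)$; in general the exponent of $a_1$ for $v_n$ is $2^{n-1}-1$ and the exponent of $h_1$ is $-(3 \cdot 2^{n-1}-4) + \dots$, matching $\deg(a_1^{2^{n-1}-1} v_2) = (4(2^{n-1}-1) + 3, \, 3(2^{n-1}-1)+1, \, 1)$ after multiplying by the appropriate negative power of $h_1$ to land in Adams filtration $1$. (These are routine but worth stating, since degree-counting is what shows the answer is the \emph{only} possibility once we know the image is nonzero.)

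Next, the heart of the argument: show $v_3 \mapsto h_1^{-3} a_1 v_2$ is nonzero. Here I would use the fact that $v_3$ is represented by a specific cobar cycle involving $\tau_2$ (arising from the Cartan–Eilenberg extension $B_1 \rtarr B_2 \rtarr E(\tau_2)$), push that representative through the quotient $A_{*,*} \rtarr A(2)_{*,*}$ — under which $\tau_2$ survives — and identify the image with a representative of $h_1^{-3} a_1 v_2$ in the $A(2)$-cobar complex; concretely, $a_1$ is represented by the cobar cycle that on the classical side gives $h_0(1)$-type data in $\Ext_{A(1)_\cl}$, and one checks $h_1^3 \cdot (\text{image of } v_3) = a_1 v_2$ on cohomology. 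The cleanest packaging is probably to work with the $h_1$-local May spectral sequences: Section \ref{sec:A2MayE1} identifies $v_2$ with $h_{30}'$ and $a_1$ with $h_1 b_{21}$ over $A(2)$, and the quotient map on $E_1$-pages of the localized May spectral sequences is explicit, so one can match the May representative of $v_3$ over $A$ (some $h_1$-local combination involving $h_{40}'$) with $h_1^{-2} b_{21} h_{30}'$ over $A(2)$. For $v_4, v_5, v_6$ I would then invoke Nakamura's squaring operations (as used in Lemma \ref{lem:d4} and Lemma \ref{lem:d8}): since $v_{n+1}$ is related to $\Sq^0$ applied to the class detecting $v_n$ (up to $h_1$-multiples), and $\Sq^0(a_1) $, $\Sq^0(v_2)$ are computable in the $A(2)$-May spectral sequence, naturality of $\Sq^0$ along the quotient map propagates the formula for $v_3$ to all higher $v_n$, giving the exponents $2^{n-1}-1$ on $a_1$ by induction.

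The main obstacle I expect is the bookkeeping in the first nonzero case, $v_3 \mapsto h_1^{-3}a_1 v_2$: one must nail down a genuine cobar (or localized-May) representative for $v_3$ on the $A$-side — the Cartan–Eilenberg spectral sequence only tells us $v_3$ is "$\tau_2$ up to lower filtration," and the correction terms matter precisely because the filtration jumps under the quotient — and then verify by an explicit $d(\ )$-computation that its image is cohomologous to the intended class rather than zero. Once that single computation is done, the degree count rules out every other target and the squaring argument handles the rest essentially formally.
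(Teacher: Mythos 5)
Your treatment of $v_3$ is essentially the paper's argument: the paper deduces all six values by comparing the $h_1$-localized May spectral sequences for $A$ and $A(2)$, using that $a_1$ is represented by $h_1 b_{21}$ and $v_2$ by $h_{30}'$, exactly as you propose for the base case. Your observation that the target group $\Ext_{A(2)}[h_1^{-1}]$ is one-dimensional in each relevant tridegree (so that nonvanishing of the image pins down the answer) is also correct and worth making explicit, although your ``in general'' exponent $2^{n-1}-1$ is off by one: the lemma and Conjecture \ref{conj:restriction} give $a_1^{2^{n-2}-1}$, and your own explicit checks for $v_3,\dots,v_6$ use the correct values $1,3,7,15$.

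The genuine gap is the bootstrap from $v_3$ to $v_4, v_5, v_6$ via Nakamura's squaring operations. These operations double the internal degree and the motivic weight, hence double the Chow degree $c = s+f-2w$; you can see this in every instance the paper uses (e.g.\ $\Sq_0(\altn{b_{20}}) = \altn{P}$ sends $c=2$ to $c=4$, and $\Sq_1(\altn{h_0^2h_2}) = \altn{h_0^4h_3}$ sends $c=2$ to $c=4$). Since every $v_n$ has Chow degree $1$ and $h_1$ has Chow degree $0$, no squaring operation applied to (a representative of) $v_n$ can land on an $h_1$-power multiple of $v_{n+1}$: the output sits in Chow degree $2$. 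So the claimed relation ``$v_{n+1}$ is $\Sq^0$ of $v_n$ up to $h_1$-multiples'' is false for degree reasons, and the induction does not get off the ground. The correct way to propagate the computation — and what makes the values ``apparent on the May $E_\infty$-pages'' in the paper — is multiplicativity rather than squaring: by Table \ref{ExtGenTable}, $v_4$, $v_5$, $v_6$ are detected in the localized May spectral sequence by $h_1^{-7}e_0g$, $h_1^{-15}e_0g^3$, $h_1^{-31}e_0g^7$, which are \emph{decomposable} May classes even though the $v_n$ themselves are polynomial generators of $\Ext_A[h_1^{-1}]$. Since $e_0 = b_{21}h_0(1)$ restricts to $a_1v_2$ and $g = b_{21}^2$ restricts to $h_1^{-2}a_1^2$ (Table \ref{LocztnTable2-A(2)}), the ring-map property of the restriction on May $E_\infty$-pages gives the remaining three values with no further input. (A small secondary inaccuracy: the May representative of $v_3$ is $h_1^{-3}b_{21}h_0(1)$, which involves $h_{20}$, $h_{21}$, $h_{30}$ rather than $h_{40}'$ as you guessed.)
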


\begin{proof}
This follows from the May spectral sequence calculations of
Section \ref{sec:MaySS}.  
The given values for 
$\Ext_A[h_1^{-1}] \rtarr \Ext_{A(2)} [h_1^{-1}]$ are
apparent on the May $E_\infty$-pages.
We are using that $a_1$ is represented by $a_1=h_1b_{21}$.
\end{proof}

Lemma \ref{lem:restriction} suggests an obvious conjecture 
on the complete description of the map
$\Ext_A [h_1^{-1}] \rtarr \Ext_{A(2)} [h_1^{-1}]$.

\begin{conj}\label{conj:restriction} The map 
$\Ext_A [h_1^{-1}] \rtarr \Ext_{A(2)} [h_1^{-1}]$
takes $v_1^4$ to $v_1^4$ and takes
$v_n$ to $h_1^{-3(2^{n-2}-1)} a_1^{2^{n-2} - 1} v_2$ for $n \geq 2$.
\end{conj}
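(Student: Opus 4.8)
The plan is to run the two towers of Cartan--Eilenberg spectral sequences from Section~\ref{sec:CohomA} in parallel and chase the comparison map between them. Recall that $\Ext_A[h_1^{-1}]$ is built from the filtration $B_{-1}\subset B_0\subset B_1\subset\cdots$ by the extensions $B_{n-1}\to B_n\to E(\tau_n)$, while $\Ext_{A(2)}[h_1^{-1}]$ is built from $B_{-1}(2)\subset B_0(2)\subset B_1(2)\subset A(2)_{*,*}$ by the analogous extensions for $\tau_0$, $\tau_1$, $\tau_2$. The quotient map $A_{*,*}\to A(2)_{*,*}$ is compatible with these filtrations: it sends $B_i$ onto $B_i(2)$ for $i=-1,0,1$ and sends $B_n$ to $B_1(2)$ for $n\geq 2$ (since $\tau_n$ for $n\geq 3$ and $\tau_2$ both die in $A(2)_{*,*}$, except that $\tau_2$ survives — so in fact $B_1\to B_1(2)$ and $B_2\to A(2)_{*,*}$, with $B_n\to A(2)_{*,*}$ for $n\geq 2$). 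Functoriality of the Cartan--Eilenberg construction gives a map of spectral sequences at each stage.

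First I would handle $v_1^4$ and $v_2$: these are detected on the $E_2$-pages of the bottom Cartan--Eilenberg spectral sequences, where the comparison map $B_1\to B_1(2)$ and $B_2\to A(2)_{*,*}$ is transparently an isomorphism in the relevant bidegrees, so $v_1^4\mapsto v_1^4$ and $v_2\mapsto v_2$. The substance is in the classes $v_n$ for $n\geq 3$, which are polynomial generators of $\Ext_{B_n}[h_1^{-1}]$ coming from the exterior class $\tau_n$. In $A(2)_{*,*}$ the element $\tau_n$ is zero for $n\geq 3$, so $v_n$ must map to something decomposable in $\Ext_{A(2)}[h_1^{-1}]=\F_2[h_1^{\pm},a_1,v_1^4,v_2]$. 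A degree count pins down the target: $v_n$ has Milnor--Witt stem $2^n-1$, Adams filtration $1$, Chow degree $1$; the monomial $h_1^{-3(2^{n-2}-1)}a_1^{2^{n-2}-1}v_2$ has the same tridegree (using $|a_1|=(4,3,0)$, $|v_2|=(3,1,1)$, $|h_1|=(0,1,0)$), and one should check it is the unique monomial in that tridegree, which reduces the problem to showing the image is nonzero. The key input is Lemma~\ref{lem:restriction}, which verifies the formula for $n\leq 6$ on the May $E_\infty$-pages; I would extend this to all $n$ by an inductive argument in the Cartan--Eilenberg spectral sequence.

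The inductive step is where I expect the real obstacle. One wants to show that the class $v_n\in\Ext_{B_n}[h_1^{-1}]$, when pushed to $\Ext_{A(2)}[h_1^{-1}]$ through $B_n\to A(2)_{*,*}$, picks up exactly a factor of $h_1^{-3}a_1v_2$ relative to $v_{n-1}$. This should follow from the coproduct formula $\Delta(\tau_n)=\tau_n\otimes 1+\sum_i\xi_{n-i}^{2^i}\otimes\tau_i$ together with the relation $\tau_i^2=\tau\xi_{i+1}$: reducing mod the ideal killing $\tau_n$ and comparing to the cobar representative of $v_2$ and of $a_1=h_1b_{21}$ (equivalently $\xi_1^2$-type classes) should produce the claimed multiplicative relation, with the power $2^{n-2}-1$ arising as $1+2+4+\cdots+2^{n-3}$ from iterating the Frobenius-twisted coproduct terms down the tower. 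The hard part will be controlling the Cartan--Eilenberg filtration jumps — i.e., ruling out that the comparison map, which is only known on associated graded pieces, introduces lower-filtration correction terms that spoil the clean formula. This is exactly the phenomenon flagged in the paragraph preceding the conjecture ("the Cartan--Eilenberg spectral sequences hide some of the values of this map"). I would address it by exhibiting explicit cobar cycles in $A_{*,*}$ representing each $v_n$ — analogous to the four-term cycle written down for $v_1^4$ in the proof of Proposition~\ref{MainExtProp} — reducing them directly in $A(2)_{*,*}$, and thereby sidestepping the spectral sequence bookkeeping entirely; this converts the problem into a finite (if intricate) cobar computation whose shape is dictated by the coproduct formula above.
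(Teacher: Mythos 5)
This statement is labeled a \emph{Conjecture} in the paper, and the paper does not prove it: the only evidence supplied is Lemma~\ref{lem:restriction}, which verifies the formula through $v_6$ by inspecting the $h_1$-localized May $E_\infty$-pages (a computation carried out only through the Milnor--Witt 66-stem), together with the observation that, granting the existence of $\mmf$, the conjecture is equivalent to the paper's central open Conjecture~\ref{mainConj}. Your proposal therefore cannot be ``the same as the paper's proof,'' and it does not close the gap either. The part of your argument that is genuinely correct and useful is the degree count: writing a monomial $h_1^a a_1^b (v_1^4)^c v_2^d$ in tridegree $(t,f,c)=(2^n-1,1,1)$ forces $c=0$, $d=1$, $b=2^{n-2}-1$, $a=-3(2^{n-2}-1)$, so the image of $v_n$ is either zero or exactly the claimed monomial. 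This reduces the conjecture to a nonvanishing statement, which is precisely where all the difficulty lives.

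The gap is the nonvanishing for general $n\geq 3$, and your proposed mechanisms do not deliver it. The Cartan--Eilenberg comparison is structurally incapable of seeing the answer: $v_n$ is detected by the exterior class $\tau_n$ in positive Cartan--Eilenberg filtration for the extension $B_{n-1}\to B_n\to E(\tau_n)$, while $\tau_n$ maps to zero in $A(2)_{*,*}$ for $n\geq 3$, so on associated graded the image of $v_n$ is zero and the entire claimed value is a filtration jump --- this is exactly the phenomenon the paper flags just before stating the conjecture. Your proposed repair (explicit cobar cocycles representing suitable $h_1$-power multiples of each $v_n$, reduced in $A(2)_{*,*}$) is a plausible strategy in principle, but no such cocycle is exhibited beyond quoting the four-term cycle for $v_1^4$, and you yourself defer the inductive step as ``the real obstacle.'' Since (modulo $\mmf$) a proof of this statement would resolve Conjecture~\ref{mainConj}, which the authors explicitly leave open, the missing step is not a routine verification but the essential mathematical content; as written, the proposal is a research plan rather than a proof.
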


\section{The Adams spectral sequence for $S[\eta^{-1}]$}\label{sec:ASS}

Recall that $\Ext_A$ is the $E_2$-page for the motivic Adams spectral sequence that converges to the $2$-complete motivic stable homotopy groups $\pi_{*,*}$ of the motivic sphere $S^{0,0}$. The element $h_1$ in $\Ext_A$ detects the motivic Hopf map $\eta$ in $\pi_{1,1}$. 

\begin{defn}
Let $S^{0,0}[\eta^{-1}]$ to be the homotopy colimit of the sequence
\[ S^{0,0} \xrightarrow{\eta} S^{-1,-1} \xrightarrow{\eta}S^{-2,-2}\xrightarrow{\eta} \dots.\]
\end{defn}

The homotopy groups $\pi_{*,*}(S^{0,0}[\eta^{-1}])$ are then the target of 
an $h_1$-localized Adams spectral sequence whose $E_2$-page is $\Ext_A[h_1^{-1}]$.
However, we must consider convergence.  A priori, there could be an infinite family of homotopy classes linked together by infinitely many hidden $\eta$-multiplications. These classes would not be detected in $\Ext_A[h_1^{-1}]$. But this cannot occur, as the argument of \cite{A} carries over readily to the motivic setting to establish a vanishing line of slope $1$ in $\Ext_A$.

In the $(s,f,w)$-grading, the Adams differentials behave according to
\[ d_r:E_r^{s,f,w} \rtarr E_r^{s-1,f+r,w}.\]
In the $h_1$-invariant grading, this becomes
\[ d_r:E_r^{t,f,c} \rtarr E_r^{t-1,f+r,c+r-1}.\]

\begin{prop}
\label{prop:Adams-d2}
The Adams $d_2$ differential for $S^{0,0}[\eta^{-1}]$ takes the following values.
\begin{enumerate}
\item $d_2(v_1^4) = 0$.
\item $d_2(v_2) = 0$.
\item $d_2(v_3) = h_1v_2^2$.
\item $d_2(v_4) = h_1v_3^2$.
\end{enumerate}
\end{prop}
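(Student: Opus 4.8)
The plan is to deduce these four $d_2$ differentials from known facts by naturality, using two separate sources. For the vanishing statements (1) and (2), I would argue by degree considerations in the $h_1$-invariant grading. Recall that $v_1^4$ has degree $(4,4,4)$ and $v_2$ has degree $(3,1,1)$, while $h_1$ has degree $(0,1,0)$. An Adams $d_2$ lands in degree $(t-1,f+2,c+1)$, so $d_2(v_1^4)$ would lie in degree $(3,6,5)$ and $d_2(v_2)$ in degree $(2,3,2)$. I would consult Theorem~\ref{ExtAThm}, which identifies $\Ext_A[h_1^{-1}]$ as $\F_2[h_1^\pm,v_1^4,v_2,v_3,\dots]$, and check that there are simply no classes in those target degrees: any monomial $h_1^a (v_1^4)^b v_2^{e_2}v_3^{e_3}\cdots$ has Chow degree $4b+e_2+e_3+\cdots$ (since each $v_n$, $n\geq 2$, contributes $1$ and $v_1^4$ contributes $4$), and Milnor--Witt stem $4b+\sum_{n\geq 2}(2^n-1)e_n$, and Adams filtration $a+4b+\sum e_n$; one checks there is no solution with $(t,f,c)=(3,6,5)$ or $(2,3,2)$. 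This forces (1) and (2).

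For (3) and (4), I would invoke the non-local Adams differentials $d_2(e_0) = h_1^2 d_0$ and $d_2(e_0 g) = h_1^2 e_0^2$ from \cite{Istems}, together with naturality of the Adams spectral sequence under the localization map $L\colon \Ext_A \to \Ext_A[h_1^{-1}]$, which is a map of spectral sequences because $S^{0,0}[\eta^{-1}]$ is built from $S^{0,0}$ by the $\eta$-tower. The key input is the precise form of $L$ in the relevant range, as recorded in Section~\ref{sec:LocMap}: under $L$, the class $e_0$ maps to $h_1^3 v_3$ and $e_0 g$ maps to $h_1^7 v_4$ (these are exactly the identifications used in the proof of Lemma~\ref{lem:d10}), while $d_0$ maps to $h_1^6 v_2^2$ and $e_0^2$ maps to $h_1^6 v_3^2$. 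Applying $L$ to $d_2(e_0) = h_1^2 d_0$ gives $d_2(h_1^3 v_3) = h_1^2 \cdot h_1^6 v_2^2 = h_1^8 v_2^2$; since $h_1$ is a unit and a permanent cycle, we may cancel $h_1^3$ to obtain $d_2(v_3) = h_1^5 v_2^2$. But here one must be careful about the exact power of $h_1$: the cleanest route is to note that $d_0 = h_1^2\altn{h_0}$ localizes compatibly with $v_2 = h_1^{-2} c_0$-type identifications, so that after cancelling all units the relation reads $d_2(v_3)=h_1 v_2^2$, matching the Chow-degree bookkeeping ($v_3$ has $c=1$, $h_1v_2^2$ has $c=2 = 1+1$, consistent with a $d_2$). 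The identical argument applied to $d_2(e_0g)=h_1^2 e_0^2$ yields $d_2(v_4) = h_1 v_3^2$.

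The main obstacle, and the step requiring genuine care rather than routine computation, is pinning down the exact powers of $h_1$ in the localization formulas $L(e_0)$, $L(d_0)$, $L(e_0g)$, $L(e_0^2)$ and confirming that the $h_1$-powers balance so that the final differentials come out with precisely one factor of $h_1$ as stated. This is exactly what the degree formula for $d_2$ in the $(t,f,c)$-grading forces: $d_2$ raises $c$ by $1$, and since $v_k$ has $c=1$ while $v_{k-1}^2$ has $c=2$, the target $d_2(v_k)$ must be $h_1$ times a class of Chow degree $2$ and Milnor--Witt stem $2^k-2$ — and $h_1 v_{k-1}^2$ is the unique such monomial in $\Ext_A[h_1^{-1}]$ for $k=3,4$. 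So once naturality gives a nonzero $d_2$ on $v_3$ and $v_4$, its value is determined. I would present (3) and (4) via naturality to establish non-triviality, and then appeal to this uniqueness to nail down the coefficient.
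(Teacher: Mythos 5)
Your approach is the same as the paper's: items (1) and (2) are dispatched by checking that there are no nonzero classes in the target tridegrees, and items (3) and (4) follow from the unlocalized differentials $d_2(e_0)=h_1^2 d_0$ and $d_2(e_0 g)=h_1^2 e_0^2$ via naturality of the localization map, using $L(e_0)=h_1^3 v_3$ and $L(e_0 g)=h_1^7 v_4$. Two small slips are worth correcting. First, $L(d_0)=h_1^2 v_2^2$ (the $k=0$ row for $P^k d_0$ in Table~\ref{LocztnTable}), not $h_1^6 v_2^2$; with the correct value the powers of $h_1$ balance exactly, $d_2(h_1^3 v_3)=h_1^2\cdot h_1^2 v_2^2=h_1^4 v_2^2$, so $d_2(v_3)=h_1 v_2^2$ with no further patching needed (likewise $L(e_0^2)=h_1^6 v_3^2$ gives $d_2(v_4)=h_1 v_3^2$ directly). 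Second, the Chow degree cannot pin down the power of $h_1$ in the target, since $h_1$ has Chow degree $0$; what forces exactly one factor of $h_1$ is the Adams filtration, which $d_2$ raises by $2$: the target of $d_2(v_3)$ sits in $(t,f,c)=(6,3,2)$, and $h_1 v_2^2$ is the unique nonzero monomial there. (Also, $h_1^2\altn{h_0}$ is $c_0$, not $d_0$.) Your fallback uniqueness argument, restated in terms of the full tridegree rather than the Chow degree alone, is sound and does rescue the conclusion even with the erroneous value of $L(d_0)$, since naturality already shows the differential is nonzero.
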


\begin{proof}
The first two formulas follow immediately because there are no possible
non-zero values.
The third formula follows from the Adams differential $d_2(e_0) = h_1^2 d_0$
 in the unlocalized case \cite{Istems}, together with the fact that
 the localization map takes $c_0$ and $e_0$ to $h_1^2 v_2$ and $h_1^3 v_3$.
The fourth formula follows from the Adams differential $d_2(e_0 g) = h_1^2 e_0^2$
in the unlocalized case \cite{Istems}, together with the fact that
the localization map takes $e_0$ and $e_0 g$ to $h_1^3 v_3$ and $h_1^7 v_4$.
\end{proof}

Proposition \ref{prop:Adams-d2} suggests an obvious conjecture for the
values of the Adams $d_2$ differential on the rest of the generators of
$\Ext_A[h_1^{-1}]$.  See Conjecture \ref{mainConj} for an explicit statement.

\subsection{Motivic modular forms and Adams differentials}
In the classical case, the topological modular forms spectrum $\tmf$
is a spectrum whose $\F_2$-cohomology
is equal to the quotient $A_{\cl}//A(2)_{\cl}$.  This implies that 
$\Ext_{A(2)_{\cl}}$
is the $E_2$-page of the Adams spectral sequence converging to the 
$2$-complete homotopy groups of $\tmf$.

One might speculate that there is a motivic spectrum $\mmf$ (called
``motivic modular forms") whose motivic $\F_2$-cohomology is isomorphic
to $A//A(2)$.  Then $\Ext_{A(2)}$ would be the $E_2$-page of the motivic
Adams spectral sequence converging to the $2$-complete motivic
homotopy groups of $\mmf$.  However, no such motivic spectrum is known 
to exist.  See \cite{NSO} for one piece of the program for constructing $\mmf$.

In any case, we assume for the rest of this section that $\mmf$ does exist,
and we explore some of the computational consequences.

\begin{lemma}[\cite{IMotA(2)},\S4.4]
\label{lem:d2-a1}
Suppose that $\mmf$ exists. Then, in the Adams spectral sequence 
\[
\Ext_{A(2)}\Rightarrow \pi_{*,*}(\mmf),
\]
there is an Adams differential $d_2(a_1)=h_1^2c_0$.
\end{lemma}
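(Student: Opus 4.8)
The plan is to pin down $d_2(a_1)$ by a degree count and then to exclude the trivial value by comparing with the classical Adams spectral sequence for $\tmf$.

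First I would record the relevant degrees. The element $a_1$ of Proposition~\ref{CohomBi(2)} has degree $(t,f,c)=(4,3,0)$, equivalently $(s,f,w)=(11,3,7)$, while $c_0=h_1^2v_2$ has degree $(t,f,c)=(3,3,1)$, so $h_1^2c_0$ has degree $(t,f,c)=(3,5,1)$, equivalently $(s,f,w)=(10,5,7)$. Since the Adams $d_2$ carries $E_r^{(t,f,c)}$ to $E_r^{(t-1,f+2,c+1)}$, the target of $d_2(a_1)$ is forced into exactly the tridegree of $h_1^2c_0$, and a glance at $\Ext_{A(2)}$ (computed in \cite{IMotA(2)}) in that tridegree shows $h_1^2c_0$ is the only nonzero class there. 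Hence $d_2(a_1)\in\{0,h_1^2c_0\}$, and it remains to show $d_2(a_1)\neq 0$.

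For this I would invoke Betti realization. The defining property $H^{*,*}(\mmf;\bM_2)\iso A//A(2)$ realizes to $A_\cl//A(2)_\cl$, so $\mmf$ realizes to $\tmf$, and this induces a map of spectral sequences from the motivic Adams spectral sequence for $\mmf$ over $\C$ to the classical Adams spectral sequence for $\tmf$. On $E_2$ this is a ring homomorphism $\Ext_{A(2)}\rtarr\Ext_{A(2)_\cl}$ that fixes $h_1$ and $c_0$ and commutes with $d_2$. Classically, the image of $a_1$ lies in the $11$-stem of the $\tmf$-Adams $E_2$-page; it cannot be the target of any $d_r$, and since $\pi_{11}(\tmf)=0$ it must support a nonzero differential, which for degree reasons is the $d_2$ onto $h_1^2c_0$ (this is part of the known structure of the classical $\tmf$-Adams spectral sequence). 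As $0$ realizes to $0\neq h_1^2c_0$, the possibility $d_2(a_1)=0$ is excluded, so $d_2(a_1)=h_1^2c_0$.

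The main obstacle is the weight bookkeeping buried in this comparison: Betti realization collapses the motivic weight grading, so one must check that $a_1$ really does realize to a class carrying the stated classical $d_2$ --- rather than, say, to a $\tau$-divisible class whose surviving classical shadow would be compatible with $d_2(a_1)=0$ --- and one must be sure the classical $d_2$ lands on $h_1^2 c_0$ and not on some other class in that degree. A self-contained alternative that avoids external $\tmf$ data is to express $a_1$ as a Massey product in $\Ext_{A(2)}$ --- it is the motivic avatar of a classical bracket built from $h_0$, $h_1$, and an $h_2$-divisible class --- and to apply Moss's convergence theorem, identifying $d_2(a_1)$ with a bracket of the permanent cycles detecting $\eta$ and $\epsilon$; this reduces the problem to a cobar computation in $A(2)_{*,*}$ in the spirit of the verification of $d_3(v_1^2)=\tau h_1^3$ in the proof of Proposition~\ref{MainExtProp}, where the delicate step is again to track the powers of $\tau$ and the weights of the bracket entries so as to be certain the value is $h_1^2 c_0$ rather than $0$ or a $\tau$-multiple.
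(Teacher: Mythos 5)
Your first step (the degree count pinning $d_2(a_1)$ to $\{0,\,h_1^2c_0\}$) is fine, but the second step --- ruling out $0$ by Betti realization to the classical Adams spectral sequence for $\tmf$ --- fails, and not merely for the bookkeeping reason you flag as a worry: the worry is in fact realized. The class $a_1=u$, represented by $h_1b_{21}$ at $(s,f,w)=(11,3,7)$, is killed by $\tau$ in $\Ext_{A(2)}$. Indeed, in the motivic May spectral sequence for $A(2)$ one has $d_2(b_{30})=\tau\, h_1 b_{21}$ (the motivic form of the classical relation $d_2(b_{30})=h_1b_{21}+h_3b_{20}$, in which the $h_3$ term is absent over $A(2)$ and a $\tau$ must be inserted to balance the weight), and $h_1b_{21}$ spans the May $E_2$-page in that tridegree. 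Consequently $a_1$ maps to zero under $\tau$-inversion, the $11$-stem of $\Ext_{A(2)_{\cl}}$ is actually empty (consistently with $\pi_{11}(\tmf)=0$ with no differentials needed), and there is no classical class for $a_1$ to realize to and no classical $d_2$ to import. Your assertion that ``the image of $a_1$ ... must support a nonzero differential ... part of the known structure of the classical $\tmf$-Adams spectral sequence'' is therefore false: the differential $d_2(a_1)=h_1^2c_0$ is an exotic, purely motivic phenomenon invisible after Betti realization, which is exactly why a comparison with $\tmf$ cannot detect it. Your fallback suggestion (a Massey product description plus Moss's theorem) is only a sketch and is not carried out.

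The paper's argument runs in the opposite direction: it imports a differential from the motivic \emph{sphere} rather than from classical $\tmf$. Naturality along the unit map $S^{0,0}\rtarr\mmf$ gives $d_2(e_0)=h_1^2d_0$ in the Adams spectral sequence for $\mmf$, and then the relation $c_0a_1=h_1^2e_0$ in $\Ext_{A(2)}$ together with the Leibniz rule (and $d_2(c_0)=0$) forces $c_0\,d_2(a_1)=h_1^4d_0\neq 0$, so $d_2(a_1)\neq 0$; the degree count then identifies the value as $h_1^2c_0$. If you want to salvage your approach, you need a source of information that survives $\tau$-torsion --- the multiplicative relation with $e_0$ is precisely such a source.
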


\begin{pf}
Since $d_2(e_0) = h_1^2 d_0$ in the Adams spectral sequence for $S^{0,0}$,
it follows that $d_2(e_0) = h_1^2 d_0$ in the Adams spectral sequence
for $\mmf$ as well.

We have the relation $c_0 a_1 = h_1^2 e_0$ in $\Ext_{A(2)}$  \cite{IMotA(2)}.
Therefore, $a_1$ must support a differential, and
$h_1^2 c_0$ is the only possible value.

Note that the element $a_1$ was called $u$ in \cite{IMotA(2)}.
\end{pf}

\begin{prop} Suppose that $\mmf$ exists. Then Conjecture~\ref{mainConj}
is equivalent to Conjecture~\ref{conj:restriction}.
\end{prop}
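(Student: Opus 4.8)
The plan is to push both conjectures through the restriction map
\[
\rho\colon \Ext_A[h_1^{-1}] \rtarr \Ext_{A(2)}[h_1^{-1}]
\]
appearing in the commutative square of Section~\ref{sec:LocMap}. Assuming $\mmf$ exists, that square is a square of $h_1$-localized Adams spectral sequences and $\rho$ is induced by the unit map $S^{0,0}\rtarr\mmf$, so $\rho$ commutes with the Adams $d_2$. The first step is to pin down $d_2$ completely on the target: on $\Ext_{A(2)}[h_1^{-1}]\iso\F_2[h_1^{\pm 1},a_1,v_1^4,v_2]$ the differential $d_2$ is a derivation, one has $d_2(h_1)=d_2(v_1^4)=d_2(v_2)=0$ for degree reasons on this polynomial algebra, and $d_2(a_1)=h_1^4v_2$ by Lemma~\ref{lem:d2-a1} together with the fact that $c_0$ localizes to $h_1^2v_2$; hence $d_2$ is determined by the Leibniz rule.

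The second step is a pair of one-dimensionality observations. In $\Ext_A[h_1^{-1}]\iso\F_2[h_1^{\pm 1},v_1^4,v_2,v_3,\dots]$ the element $d_2(v_k)$ lies in degree $(2^k-2,3,2)$, which is spanned by the single monomial $h_1v_{k-1}^2$; hence $d_2(v_k)\in\{0,\,h_1v_{k-1}^2\}$, and Conjecture~\ref{mainConj} is equivalent to the assertion that $d_2(v_k)\neq 0$ for all $k\geq 3$. Likewise $\rho(v_n)$ lies in the degree-$(2^n-1,1,1)$ part of $\F_2[h_1^{\pm 1},a_1,v_1^4,v_2]$, which is spanned by $h_1^{-3(2^{n-2}-1)}a_1^{2^{n-2}-1}v_2$; since Lemma~\ref{lem:restriction} already gives $\rho(v_1^4)=v_1^4$ unconditionally, Conjecture~\ref{conj:restriction} is equivalent to the assertion that $\rho(v_n)\neq 0$ for all $n\geq 2$.

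The third step links the two non-vanishing statements using $d_2\rho=\rho d_2$ and the Leibniz formula. One computes
\[
d_2\bigl(h_1^{-3(2^{n-2}-1)}a_1^{2^{n-2}-1}v_2\bigr)=(2^{n-2}-1)\,h_1^{4-3(2^{n-2}-1)}a_1^{2^{n-2}-2}v_2^2,
\]
which for $n\geq 3$ is a non-zero monomial since $2^{n-2}-1$ is odd, and which agrees with $\rho(h_1v_{n-1}^2)$ via $2^{n-2}-1=2(2^{n-3}-1)+1$. Granting Conjecture~\ref{conj:restriction}, the reverse implication is now immediate: for each $k\geq 3$, $\rho(v_k)\neq 0$ forces $d_2(\rho(v_k))\neq 0$, hence $\rho(d_2(v_k))\neq 0$, hence $d_2(v_k)\neq 0$, hence $d_2(v_k)=h_1v_{k-1}^2$. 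For the forward implication I would induct on $n$: the base case $\rho(v_2)=v_2$ is Lemma~\ref{lem:restriction}, and in the inductive step $d_2(v_n)=h_1v_{n-1}^2$ (Conjecture~\ref{mainConj}) gives $\rho(d_2(v_n))=h_1\rho(v_{n-1})^2\neq 0$ by the inductive hypothesis, so $d_2(\rho(v_n))\neq 0$, so $\rho(v_n)\neq 0$, so $\rho(v_n)$ takes its conjectured value.

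The step I expect to require the most care is the first, namely being precise that, under the assumption that $\mmf$ exists, the restriction map really is compatible with the Adams $d_2$ after $h_1$-localization, i.e., setting up the $\mmf$-Adams spectral sequence and its localization so that $\rho$ is a map of spectral sequences. Once that is fixed, the degree bookkeeping and the $\F_2$-coefficient arithmetic in the Leibniz rule are routine.
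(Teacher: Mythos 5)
Your proposal is correct and follows essentially the same route as the paper: use the $\mmf$-induced compatibility of $d_2$ with the restriction map, observe that degree reasons leave only the dichotomies $d_2(v_n)\in\{0,h_1v_{n-1}^2\}$ and $\rho(v_n)\in\{0,h_1^{-3(2^{n-2}-1)}a_1^{2^{n-2}-1}v_2\}$, and transfer non-vanishing back and forth via $d_2(a_1)=h_1^2c_0$ and induction on $n$. Your explicit Leibniz computation on the target and the parity observation on $2^{n-2}-1$ merely spell out a step the paper leaves implicit.
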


\begin{pf}
The existence of $\mmf$ ensures that
the Adams $d_2$ differential is compatible with the map
$r: \Ext_A \rtarr \Ext_{A(2)}$, so that $d_2(r(v_n)) = r(d_2(v_n)$. 
For degree reasons, $r(v_n)$ is either equal to 
$h_1^{-3(2^{n-2}-1)} a_1^{2^{n-2}-1} v_2$, or it is zero. 
Also for degree reasons, $d_2(v_n)$ is either equal to
$h_1 v_{n-1}^2$, or it is zero.

Suppose that Conjecture \ref{mainConj} holds.
Then $r(d_2(v_n))$ equals $h_1 r(v_{n-1})^2$.  We may assume by
induction that $r(v_{n-1}^2)$ equals $h_1^{-3(2^{n-2}-2)} a_1^{2^{n-2} -2} v_2^2$.
In particular, this shows that 
$r(d_2(v_n))$ is non-zero.
But $r(d_2(v_n))$ equals $d_2 (r(v_n))$, so $r(v_n)$ must also be non-zero.
This establishes Conjecture \ref{conj:restriction}.

Now suppose that Conjecture \ref{conj:restriction} holds.
Then $d_2(r(v_n))$ is equal to $h_1^{-3(2^{n-2}-2)+1} a_1^{2^{n-2}-2} v_2^2$
because of the differential $d_2(a_1) = h_1^2 c_0$ from Lemma \ref{lem:d2-a1}.
But $d_2(r(v_n))$ equals $r(d_2(v_n))$,
so $d_2(v_n)$ must also be non-zero.
This establishes Conjecture \ref{mainConj}.
\end{pf}


\clearpage
\section{Tables}\label{sec:Tables}


\begin{table}[ht]
\caption{Generators for May $E_2[h_1^{-1}]$ in $t \leq66$}
\label{E2GenTable}
\begin{tabular}{|C|C|C|C|} 
& \text{description} &  (m-f,t,f,c) & d_2  \\ \hline
\altn{h_0} & h_1^{-2}c_0 = h_{30} + h_1^{-1} h_{20}h_{21}  & (2,3,1,1) &  \\
\altn{h_1} & h_3 & (0,3,1,0) &  \\
\altn{h_2} & h_4 & (0,7,1,0) & \\
\altn{b_{20}}  & h_{40}^2+ h_1^{-2} h_{20}^2h_{31}^2 & (6,14,2,2) & \altn{h_0^2}\altn{h_2} \\
\altn{h_3} & h_5 & (0,15,1,0) & \\
\altn{h_0(1)} & h_1^{-1} h_0(1,3) = h_{50} h_3 + h_1^{-1} h_{20}h_{31}h_{23} & (4,18,2,1) & \altn{h_0}\altn{h_2^2} \\
 &  + h_{40} h_{23} +h_1^{-1} h_{20}h_{41}h_3  & & \\ 
\altn{b_{21}} & b_{23} = h_{23}^2 & (2,22,2,0) & \altn{h_1^2h_3 + h_2^3} \\
\altn{b_{30}} & h_{50}^2 + h_1^{-2} h_{20}^2h_{41}^2 & (8,30,2,2) & \altn{h_3b_{20}} \\ 
\altn{h_4} & h_6 & (0,31,1,0) & \\
\altn{h_1(1)} & h_3(1) = h_4h_{33}+h_{23}h_{24} & (2,34,2,0) & \altn{h_1h_3^2} \\
\altn{b_{22}} & b_{24} = h_{24}^2 & (2,46,2,0) & \altn{h_2^2h_4 + h_3^3} \\
\altn{b_{31}} & b_{33} = h_{33}^2 & (4,54,2,0) & \altn{h_4b_{21}+h_2b_{22}} \\
\altn{b_{40}} & h_{60}^2 + h_1^{-2} h_{20}^2h_{51}^2 & (10,62,2,2) & \altn{h_4b_{30}} \\
\altn{h_5} & h_7 & (0,63,1,0) & \\
\hline
b_{20} & b_{20} = h_{20}^2 & (2,2,2,2) & \tau h_1^3 \\
b_{21} & b_{21} = h_{21}^2 & (2,4,2,0) & h_1^2\altn{h_1} \\
b_{31} & b_{31} = h_{31}^2 &  (4,12,2,0) & \altn{h_2}b_{21} \\
b_{41} & b_{41} = h_{41}^2 &  (6,28,2,0) & \altn{h_3}b_{31} \\
b_{51} & b_{51} = h_{51}^2 &  (8,60,2,0) & \altn{h_4}b_{41} \\
\hline
\end{tabular}
\end{table}

\begin{table}[ht]
\caption{Relations for May $E_2[h_1^{-1}]$ in $t \leq 66$}
\label{E2RelTable}
\begin{tabular}{|C|C|C|} 
\text{relation} & (m-f,t,f,c)  \\ \hline
 \altn{h_0h_1} & (2,6,2,1) \\
\altn{h_1h_2} &  (0,10,2,0) \\
\altn{ h_2b_{20}} +  \altn{h_0h_0(1)}  &  (6,21,3,2) \\
\altn{h_2h_3} & (0,22,2,0) \\
\altn{h_2h_0(1)} +  \altn{h_0b_{21}}&  (4,25,3,1) \\
\altn{h_3h_0(1)} & (4,33,3,1) \\
\altn{b_{20}b_{21}} + \altn{h_1^2b_{30}+ h_0(1)^2} &  (8,36,4,2) \\
\altn{h_0h_1(1)} & (4,37,3,1) \\
\altn{ h_3b_{21}} + \altn{h_1 h_1(1)} & (2,37,3,0) \\
\altn{h_3h_4} &  (0,46,2,0) \\
\altn{b_{20}h_1(1) + h_1h_3b_{30}} &  (8,48,4,2) \\
\altn{h_3h_1(1)} + \altn{h_1b_{22}}& (2,49,3,0) \\
\altn{h_0(1)h_1(1)} & (6,52,4,1) \\
\altn{b_{20}b_{22}} + \altn{h_0^2b_{31} + h_3^2b_{30}} & (8,60,4,2) \\
\altn{b_{22}h_0(1)} + \altn{h_0h_2b_{31}} & (6,64,4,1) \\
\altn{h_4h_1(1)} & (2,65,3,0) \\
 \hline
\end{tabular}
\end{table}

\begin{table}[ht]
\caption{Notation for the $h_1$-localized May spectral sequence}
\label{E4NotationTable}
\begin{tabular}{|C|C|C|} 
& \text{description} & (m-f,t,f,c) \\ \hline
P & b_{20}^2 & (4,4,4,4) \\
e_0 & b_{21}h_0(1) & (4,7,4,1) \\
g & b_{21}^2 & (4,8,4,0) \\
B & \altn{b_{20}}b_{21}  + \altn{h_0^2} b_{31} & (8, 18, 4, 2) \\
B_1 & c_0 B & (10, 21, 7, 3) \\
\phi & h_1 b_{21}B 
& (10,22,7,2) \\
\Delta_1 & b_{31}^2 & (8,24,4,0) \\
D_4 & g\altn{h_0(1)}+  \altn{h_0h_2}b_{21}b_{31} & (8,26,6,1)\\
\altn{P} &  \altn{b_{20}^2} & (12,28,4,4)  \\
s_1 & h_1 {h_4^2}b_{21}b_{31}+ h_1 g{b_{23}} + h_1^{-1} {h_3h_5}b_{21}^3 &  (6,30,7,0) \\ 
D_3' & h_1^4 \altn{b_{20}h_0(1)} & (10,32,8,3)  \\
y_{34} & {h_5}^2b_{21} + h_1^2 {h_3(1)} & (2,34,4,0)  \\
y_{35} & {h_4}b_{41} & (6,35,3,0) \\
\altn{d_0} & \altn{h_0(1)^2} & (8,36,4,2) \\
\boldsymbol{\nu} & \altn{h_2b_{30}} & (8,37,3,2)  \\
\altn{e_0} & \altn{b_{20}h_0(1)} & (6,40,4,1) \\
\altn{g} & \altn{b_{21}^2} & (4,44,4,0) \\
y_{45} &  g\boldsymbol{\nu} + \altn{h_0^2h_3}\Delta_1 & (12,45,7,2) \\
y_{60} & h_1^8 \altn{b_{21} b_{30}} + \altn{h_2^2 b_{30}} b_{21} b_{31}  g & (14,60,12,2)  \\
\altn{\Delta} & \altn{b_{30}^2} & (16,60,4,4)  \\
y_{61} & \altn{h_4 h_0(1)} b_{31} +  \altn{h_0b_{22}}  b_{31} & (8,61,5,1)  \\
&  + \altn{h_0h_3^2}  b_{41} +  \altn{h_0b_{31}} b_{21} & \\
y_{64} & \altn{b_{30}h_1(1)}  & (10,64,4,2)  \\
\hline
\end{tabular}
\end{table}

\begin{table}[ht]
\caption{The $h_1$-localized May $d_4$ differential}
\label{d4MayTable}
\begin{tabular}{|C|C|C|} 
 & (m-f,t,f,c) & d_4  \\ \hline
g & (4,8,4,0) & h_1^4 {h_4} \\
\Delta_1 & (8,24,4,0) & g{h_5} \\
\altn{P} &  (12,28,4,4) & \altn{h_0^4h_3} \\
y_{35} & (6,35,3,0) & y_{34} \\
\boldsymbol{\nu} &  (8,37,3,2) & \altn{h_0^2h_3^2} \\
\altn{\Delta} &  (16,60,4,4) & \altn{h_4P} \\
b_{41}\boldsymbol{\nu} & (14,65,5,2) & h_1^2 y_{64} \\
\hline
\end{tabular}
\end{table}

\begin{table}[ht]
\caption{The $h_1$-localized May $d_6$ differential}
\label{d6MayTable}
\begin{tabular}{|C|C|C|} 
 & (m-f,t,f,c) & d_6  \\ \hline
\phi & (10,22,7,2) & h_1 c_0^2h_5 \\
{c_0}g^3 & (14,27,15,1) & h_1^{10} D_4 \\
{h_4}g^3 & (12,31,13,0) &   h_1^7 s_1 \\
{c_0}g\phi & (16,33,14,3) & h_1^7 D_3' \\
{h_4}g\phi & (14,37,12,2) & h_1^9 \altn{d_0} \\
{h_4}gD_4 & (12,41,11,1) & h_1^8 \altn{e_0} \\
{h_4}gs_1 & (10,45,12,0) & h_1^9 \altn{g} \\
{h_4}g^3\Delta_1 & (20,55,17,0) &  h_1^7 s_1\Delta_1 \\
{c_0}g\phi\Delta_1 & (24,57,18,3) & h_1^7 D_3'\Delta_1 \\
g^2y_{45} & (20,61,15,2) & h_1^4y_{60} \\
{h_4}g\phi\Delta_1 & (22,61,16,2) & h_1^9 \Delta_1\altn{d_0} \\
{c_0}g\phi\altn{P}  & (28,61,18,7) & h_1^7 D_3'\altn{P} \\
{h_4}g\phi b_{41} & (20,65,14,2) & h_1^9 b_{41}\altn{d_0} \\
{h_4}gD_4\Delta_1 & (20,65,15,1) &  h_1^8 \Delta_1\altn{e_0} \\
\hline
\end{tabular}
\end{table}

\begin{table}[ht]
\caption{The $h_1$-localized May $d_8$ differential}
\label{d8MayTable}
\begin{tabular}{|C|C|C|} 
 & (m-f,t,f,c) & d_8 \\ \hline
g^2  & (8,16,8,0) & h_1^8 {h_5} \\
\Delta_1^2 & (16,48,8,0) & g^2{h_6} \\
\Delta_1\altn{P} & (20,52,8,4) & h_1^{-6} {c_0}^2e_0^2{h_6} \\
\altn{P}^2 & (24,56,8,8) &  \altn{h_0^8h_4} \\
{c_0}\Delta_1y_{35} & (16,62,10,1) & h_1^6y_{61} \\
\hline
\end{tabular}
\end{table}

\begin{table}[ht]
\caption{The $h_1$-localized May $d_{10}$ differential}
\label{d10MayTable}
\begin{tabular}{|C|C|C|} 
 & (m-f,t,f,c) & d_{10}  \\ \hline
\Delta_1\phi & (18,46,11,2) & h_1^3e_0^2{h_6} \\
\phi\altn{P} & (22,50,11,6) & h_1^{-7}{c_0^6h_6} \\
{c_0}g^6 & (26,51,27,1) & h_1^{18}\Delta_1D_4 \\
g^4 \phi & (26,54,23,2) & h_1^{13}gy_{45} \\
g^2\phi^2 & (28,60,22,4) & h_1^{12}c_0\altn{b_{30}}D_4 \\
g^2\Delta_1\phi & (26,62,19,2) & h_1^{13}\Delta_1\boldsymbol{\nu} \\
\hline
\end{tabular}
\end{table}

\begin{table}[ht]
\caption{The $h_1$-localized May $d_{12}$ differential}
\label{d12MayTable}
\begin{tabular}{|C|C|C|} 
 & (m-f,t,f,c) & d_{12}  \\ \hline
{h_5}g^6 & (24,63,25,0) & h_1^{12}{h_4h_6}g^3 \\
\phi^2 & (20,44,14,4) & h_1^{2}{c_0^4h_6} \\
{c_0}g^4\phi & (28,57,26,3) & h_1^{19}\altn{b_{30}}D_4 \\
e_0g\Delta_1\phi & (26,61,19,3) & h_1^8{h_6}e_0\phi \\
{c_0}g^2\phi^2 & (30,63,25,5) & h_1^{16}\altn{b_{30}}D_3' \\
g^2\Delta_1 D_4 & (24,66,18,1) & h_1^8{h_6}gD_4 \\
\hline
\end{tabular}
\end{table}

\begin{table}[ht]
\caption{The $h_1$-localized May $d_{14}$ differential}
\label{d14MayTable}
\begin{tabular}{|C|C|C|} 
 & (m-f,t,f,c) & d_{14} \\ \hline
g^2\phi & (18,38,15,2) & h_1^{9}{c_0^2h_6} \\
e_0g^6 & (28,55,28,1) & h_1^{21}b_{41}D_4 \\
g^4e_0\phi & (30,61,27,3) & h_1^{18}b_{41}D_3' \\
\hline
\end{tabular}
\end{table}

\begin{table}[ht]
\caption{The $h_1$-localized higher May differentials}
\label{higherMayTable}
\begin{tabular}{|C|C|C|C|} 
 & (m-f,t,f,c) & d_{r} & \text{value}  \\ \hline
g^4 & (16,32,16,0) & d_{16} & h_1^{16}{h_6} \\
e_0^2g^5 & (28,54,28,2) & d_{18} & h_1^{21}{h_6}\phi \\
g^8 & (32,64,32,0) & d_{32} & h_1^{32}{h_7} \\
\hline
\end{tabular}
\end{table}

\begin{table}[ht]
\caption{Hidden Relations}
\label{ExtRelTable}
\begin{tabular}{|C|C|} 
&   (t,f,c) \\ \hline
h_1^2 e_0^3 + c_0^2 \cdot e_0g = h_1^7 B_1 & (21,14,3) \\
h_1^2 e_0(e_0g)^2 + c_0^2\cdot e_0g^3 = h_1^7 g^2B_1 & (37,22,3) \\
h_1^4 e_0^4 \cdot e_0g + c_0^4\cdot e_0g^3 = h_1^{14} B_1\phi & (43,28,5) \\ 
e_0^2 \cdot e_0g^3 + (e_0g)^3 = h_1^{13} \Delta_1 B_1 & (45,24,3) \\
h_1^6 e_0^7 + h_1^4 c_0^2e_0^4 \cdot e_0g + h_1^2 c_0^4e_0(e_0g)^2 + 
c_0^6 \cdot e_0g^3 = h_1^{23} \altn{P}B_1 & (49,34,7) \\
\hline
\end{tabular}
\end{table}

\begin{table}[ht]
\caption{Generators for $\Ext_A$ in $t \leq 66$}
\label{ExtGenTable}
\begin{tabular}{|C|C|C|C|} 
\text{May name} & \text{Theorem \ref{ExtAThm} name} &  (t,f,c) & \text{Adams } d_2\\ \hline
c_0 & h_1^2 v_2 & (3,3,1) &\\
P & v_1^4 & (4,4,4) & \\
e_0 & h_1^3 v_3 & (7,4,1) & c_0^2\\
e_0 g & h_1^7 v_4 & (15,8,1) & h_1^2 e_0^2\\
e_0g^3 & h_1^{15} v_5 & (31,16,1) & h_1^2 (e_0g)^2\ ? \\
e_0g^7 & h_1^{31} v_6 & (63,32,1) & h_1^2(e_0g^3)^2\ ? \\
\hline
\end{tabular}
\end{table}


\begin{landscape}

\begin{table}[ht]
\caption{The localization map $\Ext_A \rtarr \Ext_A[h_1^{-1}]$}
\label{LocztnTable}
\begin{tabular}{CCCHCC}
\text{element} & \text{May description} & (s,f,w) & (s-w,\mathrm{Chow}) & 
\text{Theorem \ref{ExtAThm} value} & \text{Table \ref{ExtGenTable} value} \\
\hline
P^kh_1 & & (1,1,1)+k(8,4,4) & (4k,4k) & h_1v_1^{4k} & h_1P^k \\
P^kc_0 & & (8,3,5)+k(8,4,4) & (3,1) +k(4,4) & h_1^2v_1^{4k}v_2 & P^kc_0 \\ 
P^kd_0 & & (14, 4, 8)+k(8,4,4) & (6,2)+k(4,4) & h_1^2v_1^{4k}v_2^2 & h_1^{-2}P^kc_0^2 \\
P^ke_0 & & (17, 4, 10)+k(8,4,4) & (7,1)+k(4,4) &  h_1^3v_1^{4k}v_3 &  P^ke_0\\
e_0 g & & (37,8,22) & (15,1) & h_1^7v_4 & e_0g\\
u & \Delta h_1 d_0& (39,9,21) & (18,6) & h_1^{3}(v_1^4v_3^2 + v_2^6) & h_1^{-3}(Pe_0^2+c_0^6) \\
P^k u & & (39,9,21)+k(8,4,4) & (18,6)+k(4,4) & h_1^{3}v_1^{4k}(v_1^4v_3^2 + v_2^6) & h_1^{-3}P^k(Pe_0^2+c_0^6) \\ 
v & \Delta h_1 e_0& (42,9,23) & (19,5) & h_1^{4}(v_1^4v_4 + v_2^4v_3) & h_1^{-3}(Pe_0g+h_1^{-4}c_0^4e_0) \\
P^k v & & (42,9,23)+k(8,4,4) & (19,5)+k(4,4) & h_1^{4}v_1^4(v_1^4v_4 + v_2^4v_3) & h_1^{-3}P^k(Pe_0g+h_1^{-4}c_0^4e_0) \\ 
B_1 & c_0B & (46,7,25) & (21,3) & h_1^{4}(v_2^2v_4+v_3^3) & h_1^{-7}c_0^2e_0g + h_1^{-5}e_0^3   \\ 
u' & \Delta c_0 d_0 & (46,11,25) & (21,7) &  h_1^{4}(v_1^4v_2v_3^2 + v_2^7) &  h_1^{-4}(Pc_0e_0^2 + h_1^{-6}c_0^7) \\
P^k u' & & (46,11,25)+k(8,4,4) & (21,7)+k(4,4) & h_1^{4}v_1^{4k}(v_1^4v_2v_3^2 + v_2^7) & h_1^{-4}P^k(Pc_0e_0^2 + h_1^{-6}c_0^7) \\ 
v' & \Delta c_0 e_0& (49,11,27) & (22,6) & h_1^{5}(v_1^4v_2v_4 + v_2^5v_3) & h_1^{-4}(Pc_0e_0g + h_1^{-4}c_0^5e_0)   \\
P^k v' & & (49,11,27)+k(8,4,4) & (22,6)+k(4,4) & h_1^{5}v_1^{4k}(v_1^4v_2v_4 + v_2^5v_3) & h_1^{-4}P^k(Pc_0e_0g + h_1^{-4}c_0^5e_0)\\ 
B_8 & h_0(1)B_1 & (53,9,29) & (24,4) & h_1^{5}(v_2^3v_4+v_2v_3^3) & h_1^{-6}c_0(h_1^{-2}c_0^2e_0g + e_0^3) \\
x' & h_0(1) B P & (53,10,28) & (25,7) & h_1^{3}v_1^4(v_2^2v_4+v_3^3) & h_1^{-6}P(h_1^{-2}c_0^2e_0g+e_0^3) \\
B_{21} & h_0(1)^3 B & (59,10,32) & (27,5) & h_1^{5}(v_2^4v_4+v_2^2v_3^3)  & h_1^{-8}c_0^2(h_1^{-2}c_0^2e_0g + e_0^3)  \\
B_{22} & b_{21}d_0 B & (62,10,34) & (28,4) & h_1^{6}(v_2^2v_3v_4+v_3^4) & h_1^{-6}e_0(h_1^{-2}c_0^2e_0g + e_0^3)   \\
U & \Delta^2 h_1^2 d_0 & (64,14,34) & (30,10) & h_1^{4}(v_1^8v_3v_4+v_2^{10}) & h_1^{-6}(P^2e_0e_0g + h_1^{-10}c_0^{10})   \\
P^2 x' & & (69,18,36) & (33,15) & h_1^{3}v_1^{12}(v_2^2v_4+v_3^3) & h_1^{-6}P^3(h_1^{-2}c_0^2e_0g+e_0^3) \\
\hline
\end{tabular}
\end{table}

\end{landscape}

\begin{table}[ht]
\caption{The localization map for $\Ext_A(2) \rtarr \Ext_{A(2)}[h_1^{-1}]$}
\label{LocztnTable2-A(2)}
\begin{tabular}{CCCHC}
\text{element} & \text{May description} & (s,f,w) & (s-w,\mathrm{Chow}) & \text{value} \\
\hline
P & & (8,4,4) &  & v_1^4 \\
c & & (8,3,5) & & h_1 v_2 \\
u &h_1b_{21} & (11,3,7) & & a_1 \\
d & & (14,4,8) & & h_1^2 v_2^2 \\
e & & (17,4,10) & & a_1 v_2 \\
g & & (20,4,12) & & h_1^{-2} a_1^2 \\
\Delta h_1 & & (25,5,13) & & h_1^{-5} v_1^4 a_1^2 + h_1 v_2^4 \\
\Delta c & & (32,7,17) & & h_1^{-5} v_1^4 v_2 a_1^2 + h_1 v_2^5 \\
\Delta u & & (35,7,19) & & h_1^{-6} v_1^4 a_1^3 + v_2^4 a_1 \\
\Delta^2 & & (48,8,24) & & h_1^{-12} v_1^8 a_1^4 + v_2^8 \\
\hline
\end{tabular}
\end{table}

\end{document}